\documentclass[fleq]{amsart}
\usepackage{amssymb,amsmath,latexsym,amsbsy,color,enumerate}
\numberwithin{equation}{section}

\newtheorem{theorem}{Theorem}
\newtheorem{definition}[theorem]{Definition}
\newtheorem{lemma}{Lemma}
\newtheorem{remark}{Remark}
\newtheorem{proposition}{Proposition}
\newtheorem{corollary}{Corollary}

\begin{document}
 \title{Pull-back of currents by meromorphic maps}
 \author{Tuyen Trung Truong}
    \address{Indiana University Bloomington IN 47405}
 \email{truongt@indiana.edu}
\thanks{}
    \date{\today}
    \keywords{Currents, Dominant meromorphic maps, Intersection of currents, Pull-back of currents.}
    \subjclass[2000]{37F99, 32H50.}
    \begin{abstract}
Let $X$ and $Y$ be compact K\"ahler manifolds, and let $f:X\rightarrow Y$ be a dominant meromorphic map. Base upon a regularization theorem of Dinh and
Sibony for DSH currents, we define a  pullback operator $f^{\sharp}$ for currents of bidegrees $(p,p)$ of finite order on $Y$ (and thus for {\it any}
current, since $Y$ is compact). This operator has good properties as may be expected.

Our definition and results are compatible to those of various previous works of Meo, Russakovskii and Shiffman, Alessandrini and Bassanelli, Dinh and
Sibony, and can be readily extended to the case of meromorphic correspondences.

We give an example of a meromorphic map $f$ and two nonzero positive closed currents $T_1,T_2$ for which $f^{\sharp}(T_1)=-T_2$. We use Siu's
decomposition to help further study on pulling back positive closed currents. Many applications on finding invariant currents are given.

\end{abstract}
\maketitle
\section{Introduction}       
\label{SectionIntroduction}

Let $X$ and $Y$ be two compact K\"ahler manifolds, and let $f:X\rightarrow Y$ be a dominant meromorphic map. For a $(p,p)$-current $T$ on $Y$, we seek to define
a pullback $f^{\sharp}(T)$ which has good properties. Such a pullback operator will be helpful in complex dynamics, in particular in the problem of finding
invariant closed currents for a selfmap.

We let $\pi _X,\pi _Y: X\times Y\rightarrow X,Y$ be the two projections (When $X=Y$ we denote these maps by $\pi _1$ and $\pi _2$). Let
$\Gamma _f\subset X\times Y$ be the graph of $f$, and let $\mathcal{C}_f\subset \Gamma _f$ be the critical set of $\pi _Y$, i.e. the smallest analytic subvariety
of $\Gamma _f$ so that the restriction of $\pi _Y$ to $\Gamma _f-\mathcal{C}_f$ has fibers of dimension $dim(X)-dim(Y)$. For a set $B\subset Y$, we define
$f^{-1}(B)=\pi _X(\pi _Y^{-1}(B)\cap \Gamma _f)$, and for a set $A\subset X$ we define $f(A)=\pi _Y(\pi _X^{-1}(A)\cap \Gamma _f)$.

If $T$ is a smooth form on $Y$, then it is standard to define $f^*(T)$ as a current on $X$ by the formula $f^*(T)=(\pi _X)_*(\pi _Y^*(T)\wedge [\Gamma
_f])$. This definition descends to cohomology classes: If $T_1$ and $T_2$ are two closed smooth forms on $Y$ having the same cohomology classes, then
$f^*(T_1)$ and $f^*(T_2)$ have the same cohomology class in $X$. This allows us to define a pullback operator on cohomology classes. These considerations
apply equally to continuous forms. When $T$ is an arbitrary current on $Y$, we can still define $\pi _Y^*(T)$ as a current on $X\times Y$. However, in
general it is not known how to define the wedge product of the two currents $\pi _Y^*(T)$ and $[\Gamma _f]$. This is the source of difficulty for
defining pullback for a general current.

For some important classes of currents (positive closed and positive $dd^c$-closed currents, $DSH$ currents, for definitions see the next subsection),
there have been works on this topic by Meo \cite{meo}, Russakovskii and Shiffman \cite{russakovskii-shiffman}, Alessandrini and Bassanelli
\cite{alessandrini-bassanelli2}, Dinh and Sibony \cite{dinh-sibony2},\cite{dinh-sibony4}. We will give more details on these works later, but here will
discuss only some general ideas used in these papers. Roughly speaking, in the works cited above, to define pullback of a $(p,p)$ current $T$, the
authors use approximations of $T$ by sequences of smooth $(p,p)$ forms $T_n$ satisfying certain properties, and then  define $f^{\sharp}(T)=\lim
_{n\rightarrow\infty}f^*(T_n)$ if the limit exists and is the same for all such sequences. In order to have such approximations then $T$ must have some
positive property. In these definitions, the resulting pullback of a positive current is again positive.

Our idea for pulling back a general $(p,p)$ current $T$ is as follows. Assume that we have a well-define pullback $f^{\sharp}(T)$. Then for any smooth
form of complement bidegree $\alpha$ we should have
\begin{eqnarray*}
\int _Xf^{\sharp}(T)\wedge \alpha =\int _YT\wedge f_*(\alpha ).
\end{eqnarray*}
The wedge product in the integral of the RHS is not well-defined in general. To define it we adapt the above idea, that is to use smooth approximations of either $T$ or $f_*(\alpha)$. Fortunately, since $Y$ is compact, any current $T$ is of a finite order $s$. Moreover since $f_*(\alpha )$ is a $DSH$ current, we can use the regularization theorem in \cite{dinh-sibony4} to produce approximation by $C^s$ forms $K_n(f_*(\alpha ))$ with desired properties. Then we define
\begin{eqnarray*}
\int _Xf^{\sharp}(T)\wedge \alpha =\lim _{n\rightarrow\infty}\int _YT\wedge K_n(f_*(\alpha )),
\end{eqnarray*}
if the limit exists and is the same for such good approximations. The details of this definition will be given in the next subsection. We conclude this
subsection commenting on the main results of this paper:

-Our pullback operator is compatible with the standard definition for continuous form and with the definitions in the works cited above.

-There are examples of losing positivity for currents of higher bidegrees when pulled back by meromorphic maps.

-We obtain a natural criterion on pulling back analytic varieties which, combined with Siu's decomposition, can be used to help further study pullback of
general positive closed currents.

-We can apply the definition to examples having invariant positive closed currents of higher bidegrees whose supports are contained in pluripolar sets.

\subsection{Definitions}

For convenience, let us first recall some facts about currents. The notations of positive and strongly positive currents in this paper follow the book
\cite{demailly}. For a current $T$ on $Y$, let $supp(T)$ denote the support of $T$. Given $s\geq 0$, a current $T$ is of order $s$ if it acts
continuously on the space of $C^s$ forms on $Y$ equipped with the usual $C^s$ norm. A positive $(p,p)$ current $T$ is of order $0$. If $T$ is a positive
$(p,p)$ current then its mass is defined as $||T||=<T, \omega _Y^{dim(Y)-p}>$, where $\omega _Y$ is a given K\"ahler $(1,1)$ form of $Y$. If $T$ is a
closed current on $Y$, we denote by $\{T\}$ its cohomology class. If $V$ is a subvariety in $Y$, we denote by $[V]$ the current of integration on $V$,
which is a strongly positive closed current.
 We use $\rightharpoonup$ for weak convergence of currents.

For any $p$, we define $DSH^{p}(Y)$ (see Dinh and Sibony \cite{dinh-sibony1}) to be the space of $(p,p)$ currents $T=T_1-T_2$, where $T_i$ are positive
currents, such that $dd^cT_i=\Omega _i^+-\Omega _i^-$ with $\Omega _i^{\pm}$ positive closed. Observe that $||\Omega _i^{+}||=||\Omega _i^-||$ since they
are cohomologous to each other because $dd^c(T_i)$ is an exact current. Define the $DSH$-norm of $T$ as
\begin{eqnarray*}
||T||_{DSH}:=\min \{||T_1||+||T_2||+||\Omega _1^{+}||+||\Omega _2^{+}||,~T_i,~\Omega _i,~\mbox{as above}\}.
\end{eqnarray*}
Using compactness of positive currents, it can be seen that we can find $T_i,~\Omega _i^{\pm}$ which realize $||T||_{DSH}$, hence the minimum on the RHS
of the definition of $DSH$ norm. We say that $T_n\rightharpoonup T$ in $DSH^p(Z)$ if $T_n$ weakly converges to $T$ and $||T_n||_{DSH}$ is bounded.

Our definition is modelled on th smooth approximations given by Dinh and Sibony \cite{dinh-sibony1}. However, some restrictions should be imposed on the
approximations when we deal with the case of general maps:

1) Since any definition using local approximations will give a positive current as the resulting pullback of positive currents, in general we need to use global approximations in order to deal with the cases like the map $J_X$ in Section $4$.

2)  For a general compact K\"ahler manifold, it is not always  possible to approximate a positive closed current by positive closed smooth forms (see Proposition \ref{PropositionNoVeryGoodApproximation} for an example where even the negative parts of the approximation are not bounded).

3)  The more flexible we allow in approximating currents, the more restrictive the maps and currents we can define pullback. For example, we have the following observation

\begin{lemma}Assume that for any positive closed smooth $(p,p)$ form $T$  and for every sequence of positive closed smooth forms $T_n^{\pm}$ whose masses $||T_n^{\pm}||$ are uniformly bounded and $T_n^+-T_n^-\rightharpoonup  T$, then $f^*(T_n^+-T_n^-)\rightharpoonup  f^*(T)$.  Then the same property holds for any positive closed $(p,p)$ current $T$.
\label{LemmaNotAllCanBePullbacked}\end{lemma}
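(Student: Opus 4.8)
The plan is to reduce the statement for a general positive closed current $T$ to the smooth case assumed in the hypothesis, via a compactness argument combined with a cancellation trick that feeds only \emph{smooth} limits into the hypothesis. Fix $T$ positive closed together with positive closed smooth forms $T_n^{\pm}$ satisfying $\sup_n(\|T_n^+\|+\|T_n^-\|)<\infty$ and $T_n^+-T_n^-\rightharpoonup T$ (if $T$ admits no such sequence there is nothing to prove). Since for positive closed currents mass and cohomology class control each other, and the pullback on cohomology is a fixed continuous linear map, the classes $f^*\{T_n^{\pm}\}=\{f^*(T_n^{\pm})\}$ stay bounded in $H^{p,p}(X)$ and the masses $\|f^*(T_n^{\pm})\|$ are uniformly bounded. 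Hence along any subsequence one may extract weak limits $f^*(T_n^+)\rightharpoonup A$ and $f^*(T_n^-)\rightharpoonup B$ with $A,B$ positive closed; it suffices to prove that $A-B$ is independent of the subsequence and of the approximating sequence, and that it equals $f^*(T)$.

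For the first two points I would argue as follows. Given two admissible sequences $(T_n^{\pm})$ and $(S_m^{\pm})$ for the same $T$, a diagonal extraction produces $m(n)\to\infty$ with $(T_n^+-T_n^-)-(S_{m(n)}^+-S_{m(n)}^-)\rightharpoonup T-T=0$. Rewriting the left side as $(T_n^++S_{m(n)}^-)-(T_n^-+S_{m(n)}^+)$ exhibits it as a difference of positive closed smooth forms of uniformly bounded mass whose difference converges weakly to the \emph{smooth} form $0$, so the hypothesis yields
\[
f^*(T_n^+-T_n^-)-f^*(S_{m(n)}^+-S_{m(n)}^-)\rightharpoonup f^*(0)=0.
\]
Applying this with $(S_m^{\pm})$ a subsequence of $(T_n^{\pm})$ forces all subsequential limits of $f^*(T_n^+-T_n^-)$ to coincide, so the full sequence converges; applying it with $(S_m^{\pm})$ any other admissible sequence shows the limit is the same for all of them. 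Passing to cohomology along the way gives $\{A-B\}=\lim_n f^*\{T_n^+-T_n^-\}=f^*\{T\}$, which already pins the limit down inside its cohomology class.

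The remaining point---that this sequence-independent limit is the pullback $f^*(T)$ of the cited works \cite{meo, russakovskii-shiffman, alessandrini-bassanelli2, dinh-sibony2, dinh-sibony4}---is the step I expect to be the main obstacle, and it is precisely where item 2) preceding the statement matters. For currents $T$ admitting no bounded-mass approximation by smooth positive closed forms the assertion is vacuous; for those that do, one cannot simply invoke continuity of $f^*$ along the approximants (the examples in the paper show it can fail), so the identification has to be read off from the intersection-theoretic description $f^*(T)=(\pi_X)_*(\pi_Y^*(T)\wedge[\Gamma_f])$ used in those works---matching the two limits on the locus where $\pi_Y|_{\Gamma_f}$ is a submersion and controlling the trivial (Skoda--El Mir) extension across the remaining small set---together with the cohomological identity just obtained. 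By contrast, the compactness input and the diagonal extraction are routine once the cohomological bound on the mass of a pullback is in hand.
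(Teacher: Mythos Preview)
Your first two paragraphs are correct and are essentially the paper's proof: the paper takes two approximating sequences $(T_n^{\pm})$ and $(S_n^{\pm})$ for $T$, observes that $(T_n^++S_n^-)-(T_n^-+S_n^+)\rightharpoonup 0$, and applies the hypothesis with the smooth form $0$ as the limit to conclude that $f^*(T_n^+-T_n^-)$ and $f^*(S_n^+-S_n^-)$ have the same limit. Your compactness step (bounding $\|f^*(T_n^{\pm})\|$ via cohomology) and your subsequence/diagonal bookkeeping make explicit what the paper leaves implicit, but the cancellation trick is identical.

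Your third paragraph, however, rests on a misreading of the statement. This lemma sits in the introduction \emph{before} any pullback of non-smooth currents has been defined; the phrase ``the same property holds for any positive closed $(p,p)$ current $T$'' means exactly that $f^*(T_n^+-T_n^-)$ converges and the limit is independent of the approximating sequence---and that limit is then \emph{taken as} $f^*(T)$. The sentence immediately following the lemma (``Roughly speaking, under the conditions of Lemma~\ref{LemmaNotAllCanBePullbacked} then all positive closed currents can be pulled back'') confirms this reading. There is no pre-existing $f^*(T)$ from the cited works that you must match, so the ``main obstacle'' you anticipate is not part of the claim. Your cohomological identity $\{A-B\}=f^*\{T\}$ is correct but likewise not needed. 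Drop the last paragraph and your argument coincides with the paper's.
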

\begin{proof}
In fact, let $T_n^+-T_n^-$  and $S_n^+-S_n^-$ be two sequences weakly converging to a positive closed $(p,p)$ current $T$, where $T_n^{\pm}$  and
$S_n^{\pm}$ are positive closed smooth $(p,p)$ forms having uniformly bounded masses. Then $(T_n^++S_n^-)-(T_n^-+S_n^+)$ is a sequence weakly converges
to $0$ with the same property, and because $0$ is a smooth form, we must have $f^*(T_n^++S_n^-)-f^*(T_n^-+S_n^+)$ weakly converges to $0$ by assumption.
Hence $f^*(T_n^+-T_n^-)$ and $f^*(S_n^+-S_n^-)$ converges to the same limit.
\end{proof}
Roughly speaking, under the conditions of Lemma \ref{LemmaNotAllCanBePullbacked} then all positive closed currents can be pulled back. However, this is
not true in general (see Example $2$). We will restrict to use only good approximation schemes, defined as follows

\begin{definition}
Let $Y$ be a compact K\"ahler manifold. Let $s\geq 0$ be an integer. We define a good approximation scheme by $C^s$ forms for $DSH$ currents on $Y$ to be an assignment that for a $DSH$ current $T$ gives    two sequences $K_n^{\pm}(T)$ (here $n=1,2,\ldots $) where $K_n^{\pm}(T)$ are $C^s$ forms of the same bidegrees as $T$, so that $K_n(T)=K_n^+(T)-K_n^-(T)$  weakly converges to $T$, and moreover the following properties are satisfied:

1) Boundedness: The $DSH$ norms of  $K_n^{\pm}(T)$ are uniformly bounded.

2) Positivity: If $T$ is positive then $K_n^{\pm}(T)$ are positive, and $||K_n^{\pm}(T)||$ is uniformly bounded with respect to n.

3) Closedness: If $T$ is positive closed then $K_n^{\pm}(T)$ are positive closed.

4) Continuity: If $U\subset Y$ is an open set so that $T|_U$ is a continuous form then $K_n^{\pm}(T)$ converges locally uniformly on $U$.

5) Additivity: If $T_1$ and $T_2$ are two $DSH^p$ currents, then $K_n^{\pm}(T_1+T_2)=K_n^{\pm}(T_1)+K_n^{\pm}(T_2)$.

6) Commutativity: If $T$ and $S$ are $DSH$ currents with complements bidegrees then
\begin{eqnarray*}
\lim _{n\rightarrow\infty}[\int _YK_n(T)\wedge S-\int _YT\wedge K_n(S)]=0.
\end{eqnarray*}

7) Compatibility with the differentials: $dd^cK_n^{\pm}(T)=K_n^{\pm}(dd^cT)$.

8) Condition on support: The support of $K_n(T)$ converges to the support of $T$. By this we mean that if $U$ is an open neighborhood of $supp(T)$, then  there is $n_0$ so that when $n\geq n_0$ then $supp(K_n(T))$ is contained in $U$. Moreover, the number $n_0$ can be chosen so that it depends only on $supp(T)$ and $U$ but not on the current $T$.

\label{DefinitionGoodApproximation}\end{definition}

Now we give the definition of pullback operator on $DSH^p(Y)$ currents
\begin{definition}
 Let $T$ be a $DSH^p(Y)$ current on $Y$. We say that $f^{\sharp}(T)$ is well-defined if there is a number $s\geq 0$ and a current $S$ on $X$ so that
\begin{eqnarray*}
\lim _{n\rightarrow\infty}f^*(K_n(T))= S,
\end{eqnarray*}
for any good approximation scheme by $C^{s+2}$ forms $K_n^{\pm}$. Then we write $f^{\sharp}(T)=S$.
 \label{DefinitionPullbackCurrentsByMeromorphicMaps}\end{definition}

By the commutativity property of good approximation schemes by $C^s$  forms, if $T$  is $DSH$ so that $f^{\sharp}(T)=S$ is well-defined then for any smooth form $\alpha$ we have
\begin{eqnarray*}
\int _Xf^{\sharp}(T)\wedge \alpha =\lim _{n\rightarrow\infty}\int _YT\wedge K_n(f_*(\alpha )).
\end{eqnarray*}
This equality helps to extend Definition \ref{DefinitionPullbackCurrentsByMeromorphicMaps} to any $(p,p)$ current $T$. Recall that since $Y$ is a compact manifold, any current on $Y$  is of finite order.
\begin{definition}
Let $T$ be a $(p,p)$ current of order $s_0$. We say that $f^{\sharp}(T)$ is well-defined if there is a number $s\geq s_0$ and a current $S$ on $X$ so
that
\begin{eqnarray*}
\lim _{n\rightarrow}\int _{Y}T\wedge K_n(f_*(\alpha )= \int _{X}S\wedge \alpha ,
\end{eqnarray*}
for any smooth form $\alpha$ on $X$ and any good approximation scheme by $C^{s+2}$ forms. Then we write
$f^{\sharp}(T)=S$. \label{DefinitionPullbackDdcOfOrderSCurrents}\end{definition}

\subsection{Results}

The operator $f^{\sharp}$ in Definitions \ref{DefinitionPullbackCurrentsByMeromorphicMaps} and \ref{DefinitionPullbackDdcOfOrderSCurrents} has the following properties:

\begin{lemma}
i) If $T$ is a continuous $(p,p)$ form (not necessarily $DSH$) then $f^{\sharp}(T)$ is well-defined and coincides with the standard definition
$f^{*}(T):=(\pi _1)_*(\pi _2^*(T)\wedge [\Gamma _f])$.

ii) $f^{\sharp}$ is closed under linear combinations: If $f^{\sharp}(T_1)$ and $f^{\sharp }(T_2)$ are well-defined, then so is
$f^{\sharp}(a_1T_1+a_2T_2)$ for any complex numbers $a_1$ and $a_2$. Moreover $f^{\sharp}(a_1T_1+a_2T_2)=a_1f^{\sharp}(T_1)+a_2f^{\sharp}(T_2)$.

iii) If $T$ is $DSH$ and $f^{\sharp}(T)$ is well-defined, then the support of $f^{\sharp}(T)$ is contained in $f^{-1}(supp(T))$.

iv) If $T$ is closed then $f^{\sharp}(T)$ is also closed, and in cohomology $\{f^{\sharp}(T)\}=f^*\{T\}$.
\label{LemmaGoodPropetiesOfPullbackOperator}\end{lemma}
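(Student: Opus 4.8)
The plan is to treat the four statements in order, using the defining identity $\int_X f^{\sharp}(T)\wedge\alpha=\lim_n\int_Y T\wedge K_n(f_*(\alpha))$ together with the eight axioms of a good approximation scheme as a black box. For (i), I would start from the classical fact that $f^*(T)=(\pi_1)_*(\pi_2^*(T)\wedge[\Gamma_f])$ is a well-defined current when $T$ is continuous, and that $f^*$ is continuous under locally uniform convergence of continuous forms on the complement of the indeterminacy set. The Continuity axiom (4) says $K_n^{\pm}(T)\to T$ locally uniformly on any open $U$ where $T$ is continuous; applying this with $U=Y$ (or $U$ the locus where the relevant forms are continuous) and pushing through $f^*$ gives $f^*(K_n(T))\to f^*(T)$. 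One then checks this limit is independent of the scheme, which is immediate since the locally uniform limit is $T$ itself regardless of scheme. A small technical point is making the continuity of $f^*=(\pi_1)_*(\pi_2^*(\cdot)\wedge[\Gamma_f])$ precise near $\mathcal{C}_f$; I would handle this exactly as in the cited works of Meo and Alessandrini--Bassanelli, or reduce to the case where $T$ is supported away from $f(\mathcal{C}_f)$ by a partition-of-unity argument, since the contribution over $\mathcal{C}_f$ is negligible for mass reasons.

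For (ii), linearity is essentially forced by the Additivity axiom (5): if $K_n^{\pm}$ is a good scheme then $K_n(a_1T_1+a_2T_2)=a_1K_n(T_1)+a_2K_n(T_2)$ when $a_i$ are, say, nonnegative reals, and one extends to arbitrary complex coefficients by writing $a=\Re a + i\,\Im a$ and splitting each into positive and negative parts (this is why the scheme is indexed by $\pm$). Then $f^*(K_n(a_1T_1+a_2T_2))=a_1f^*(K_n(T_1))+a_2f^*(K_n(T_2))$ by linearity of the ordinary pullback on smooth forms, and passing to the limit gives the claim; the convergence of each piece is exactly the hypothesis that $f^{\sharp}(T_i)$ is well-defined, and the sum-rule holds for every scheme so the limit is scheme-independent. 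For currents covered only by Definition \ref{DefinitionPullbackDdcOfOrderSCurrents} rather than \ref{DefinitionPullbackCurrentsByMeromorphicMaps}, the same argument runs on the level of the pairing $\int_Y T\wedge K_n(f_*(\alpha))$ using additivity in $T$ of that pairing.

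For (iii), let $U\supset \operatorname{supp}(T)$ be open; I want to show $\operatorname{supp} f^{\sharp}(T)\subset f^{-1}(\overline U)$, and then intersect over $U$. By the Condition on support axiom (8), there is $n_0$ (depending only on $\operatorname{supp}(T)$ and $U$) such that $\operatorname{supp}(K_n(T))\subset U$ for $n\geq n_0$. For such $n$, $f^*(K_n(T))=(\pi_1)_*(\pi_2^*(K_n(T))\wedge[\Gamma_f])$ is supported in $\pi_1(\pi_2^{-1}(\overline U)\cap\Gamma_f)=f^{-1}(\overline U)$ by the very definition of $f^{-1}$ given in the introduction. Taking the weak limit, $f^{\sharp}(T)$ is supported in the closed set $f^{-1}(\overline U)$; letting $U$ shrink to $\operatorname{supp}(T)$ and using that $f^{-1}$ of a decreasing family of neighborhoods has intersection $f^{-1}(\operatorname{supp}(T))$ (here one uses properness of $\pi_1|_{\Gamma_f}$, $X\times Y$ compact) finishes the argument.

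For (iv), closedness follows from the Compatibility-with-differentials axiom (7): $d f^*(K_n(T))=f^*(dK_n(T))$, and if $T$ is closed then in particular $dd^cT=0$, so axiom (7) gives $dd^cK_n^{\pm}(T)=K_n^{\pm}(0)=0$, i.e. the $K_n^{\pm}(T)$ are $dd^c$-closed; combined with the Closedness axiom (3) in the positive case (and additivity to reduce the general closed $DSH$ current to differences of positive closed ones) one gets $dK_n(T)=0$, hence $d f^*(K_n(T))=f^*(0)=0$, and $d$ is weakly continuous so $df^{\sharp}(T)=0$. For the cohomology statement I would use that the ordinary pullback $f^*$ on smooth closed forms descends to cohomology, so $\{f^*(K_n(T))\}=f^*\{K_n(T)\}$; since $K_n(T)\rightharpoonup T$ with uniformly bounded mass/DSH-norm, $\{K_n(T)\}\to\{T\}$ in the finite-dimensional space $H^{p,p}(Y,\RR)$ (weak convergence of closed currents with bounded mass implies convergence of cohomology classes), and $f^*$ is continuous on cohomology, so $\{f^*(K_n(T))\}\to f^*\{T\}$; on the other hand $f^*(K_n(T))\rightharpoonup f^{\sharp}(T)$ with the classes converging to $\{f^{\sharp}(T)\}$, and uniqueness of limits gives $\{f^{\sharp}(T)\}=f^*\{T\}$.

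The main obstacle I anticipate is (i): making rigorous the continuity of the slicing/pushforward operator $T\mapsto(\pi_1)_*(\pi_2^*(T)\wedge[\Gamma_f])$ under locally uniform convergence of continuous forms, uniformly enough across the indeterminacy locus $\mathcal{C}_f$. Everything else is a fairly mechanical unwinding of the axioms, but (i) is where the genuine geometry of $\Gamma_f$ enters and where one must be careful that the $C^{s+2}$ regularity demanded in Definition \ref{DefinitionPullbackCurrentsByMeromorphicMaps} is actually what is needed to make $f^*(K_n(T))$ behave well (the ``$+2$'' presumably absorbing the loss of two derivatives in $(\pi_1)_*\pi_2^*$ along the graph).
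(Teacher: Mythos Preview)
Your arguments for (ii) and (iii) are essentially the paper's; nothing to add there.

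For (i), the obstacle you flag is not real. The paper dispatches it cleanly: since $K_n^{\pm}(T)$ converge \emph{uniformly} on all of $Y$ (axiom (4) with $U=Y$), there are continuous limits $T^{\pm}$ and constants $\epsilon_n\downarrow 0$ with $-\epsilon_n\omega_Y^p\le K_n^{\pm}(T)-T^{\pm}\le\epsilon_n\omega_Y^p$ (Lemma~\ref{LemmaBoundContinuousForms}). Pull back this sandwich by the ordinary $f^*$: positivity is preserved, so $-\epsilon_n f^*(\omega_Y^p)\le f^*(K_n^{\pm}(T))-f^*(T^{\pm})\le\epsilon_n f^*(\omega_Y^p)$. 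Since $f^*(\omega_Y^p)$ is a single positive closed current of finite mass, this forces weak convergence $f^*(K_n^{\pm}(T))\rightharpoonup f^*(T^{\pm})$ with no delicate analysis near $\mathcal C_f$ at all. The ``$+2$'' in the regularity is irrelevant here.

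For (iv) there is a genuine gap. You want $dK_n(T)=0$, and you try to get it from axioms (3) and (7). But axiom (7) only gives $dd^cK_n^{\pm}(T)=K_n^{\pm}(dd^cT)=0$, not $dK_n(T)=0$; and axiom (3) applies only when $T$ is \emph{positive} closed. Your proposed reduction ``write a closed $DSH$ current as a difference of positive closed ones'' is exactly what fails in general---indeed Proposition~\ref{PropositionNoVeryGoodApproximation} later in the paper shows the positive-closed cone is too rigid for this kind of decomposition/approximation. So from the listed axioms alone you cannot conclude $K_n(T)$ is closed, and both your closedness argument and your cohomology argument (which needs $\{K_n(T)\}$ to make sense) collapse.

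The paper avoids this by working on the dual side. To show $f^{\sharp}(T)$ is closed, pair it with a $d$-exact test form $\alpha$; by the $dd^c$-lemma $\alpha=dd^c\beta$, hence $f_*(\alpha)=dd^c f_*(\beta)$, and axiom (7) gives $K_n(f_*(\alpha))=dd^cK_n(f_*(\beta))$. Now $\int_Y T\wedge dd^cK_n(f_*(\beta))=0$ since $T$ is closed (integrate by parts). For the cohomology class, pick a smooth closed $\theta$ with $\{\theta\}=\{T\}$, write $T-\theta=dd^cR$, and for closed $\alpha$ compute
\[
\int_X(f^{\sharp}(T)-f^*(\theta))\wedge\alpha=\lim_n\int_Y dd^cR\wedge K_n(f_*(\alpha))=\lim_n\int_Y R\wedge K_n(f_*(dd^c\alpha))=0.
\]
The moral: move the $dd^c$ onto the test form via axiom (7), rather than trying to force $K_n(T)$ itself to be closed.
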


For a smooth form, we can also define its pullback by using any desingularization of the graph of the map. We have an analog result
\begin{theorem}
Let $\widetilde{\Gamma _f}$ be a desingularization of $\Gamma _f$, and let $\pi :\widetilde{\Gamma _f}\rightarrow X$ and $g:\widetilde{\Gamma
_f}\rightarrow Y$ be the induced maps of $\pi _X|\Gamma _f$ and $\pi _Y|\Gamma _f$. Thus $\widetilde{\Gamma _f}$ is a compact K\"ahler manifold, $\pi $
is a modification, and $g$ is a surjective holomorphic map so that $f=g\circ \pi ^{-1}$. Let $T$ be a $(p,p)$ current on $Y$. If $g^{\sharp}(T)$ is
well-defined, then $f^{\sharp}(T)$ is also well-defined. Moreover $f^{\sharp}(T)=\pi _*(g^{\sharp}(T))$.
\label{TheoremPullbackByDesingularization}\end{theorem}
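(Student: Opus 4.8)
The plan is to reduce everything to the defining property of $f^\sharp$ via test forms, exploiting that $\pi$ is a modification so $\pi_*$ and $\pi^*$ are well-behaved on smooth forms. First I would recall the relation $f=g\circ\pi^{-1}$ at the level of graphs: since $\pi$ is a modification, $f_*(\alpha)=g_*(\pi^*\alpha)$ for any smooth form $\alpha$ on $X$ — this is the key compatibility, and it follows from the fact that the graph of $g$ inside $\widetilde{\Gamma_f}\times Y$ pushes forward under $\pi\times\mathrm{id}$ to $[\Gamma_f]$ (outside sets of the right codimension), together with the projection formula for the modification $\pi$. I would state this as a preliminary computation.

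Next, suppose $g^\sharp(T)=S'$ is well-defined, say via $C^{s+2}$ good approximation schemes on $\widetilde{\Gamma_f}$. The goal is to show $f^\sharp(T)=\pi_*(S')$. Fix a smooth test form $\alpha$ on $X$ of complementary bidegree and a good approximation scheme $K_n^\pm$ by $C^{s+2}$ forms on $Y$. By Definition \ref{DefinitionPullbackDdcOfOrderSCurrents} applied to $f$, I must show
\begin{eqnarray*}
\lim_{n\rightarrow\infty}\int_Y T\wedge K_n(f_*(\alpha))=\int_X \pi_*(S')\wedge\alpha.
\end{eqnarray*}
By the preliminary computation, $f_*(\alpha)=g_*(\pi^*\alpha)$, so the left-hand side is $\lim_n\int_Y T\wedge K_n(g_*(\pi^*\alpha))$; but $\pi^*\alpha$ is a smooth form on $\widetilde{\Gamma_f}$, so by Definition \ref{DefinitionPullbackDdcOfOrderSCurrents} applied to $g$ (using the \emph{same} scheme $K_n^\pm$ on $Y$, which serves as a good approximation scheme regardless of which map we feed it to) this limit equals $\int_{\widetilde{\Gamma_f}} S'\wedge\pi^*\alpha$. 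Finally, $\int_{\widetilde{\Gamma_f}} S'\wedge\pi^*\alpha=\int_X \pi_*(S')\wedge\alpha$ by the projection formula for the proper map $\pi$. Stringing these equalities together gives the claim, and in particular shows the limit defining $f^\sharp(T)$ exists and is independent of the scheme, so $f^\sharp(T)$ is well-defined and equals $\pi_*(g^\sharp(T))$.

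The main obstacle I anticipate is the preliminary identity $f_*(\alpha)=g_*(\pi^*\alpha)$ and, relatedly, checking that the order bookkeeping is consistent: one needs $g^\sharp(T)$ to be computed with a scheme of the right smoothness class relative to the order $s_0$ of $T$, and to know that the push-pull identity for currents — not merely smooth forms — behaves correctly under $\pi$. Since $\pi$ is a modification (bimeromorphic), $\pi_*\pi^*=\mathrm{id}$ on currents and $\pi^*$ of a smooth form is smooth, so the delicate point is really just verifying $g_*(\pi^*\alpha)=f_*(\alpha)$; this is where one uses that $\Gamma_f$ and $\widetilde{\Gamma_f}$ agree generically and that $\pi,g$ are the induced projections, so both sides equal $(\pi_X)_*(\pi_Y^*\alpha\wedge[\Gamma_f])$ after pushing forward. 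I would isolate this as a short lemma at the start of the proof and then the rest is the formal chain of equalities above.
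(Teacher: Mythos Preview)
Your argument is correct and is essentially the paper's own proof: reduce to $f_*(\alpha)=g_*(\pi^*\alpha)$, invoke the defining property of $g^{\sharp}(T)$ against the smooth test form $\pi^*\alpha$ on $\widetilde{\Gamma_f}$, and finish with the projection formula for $\pi$. Two minor slips in your discussion do not affect the core argument: the good approximation schemes in Definition~\ref{DefinitionPullbackDdcOfOrderSCurrents} live on $Y$ (not on $\widetilde{\Gamma_f}$), and in your closing paragraph the roles of $\pi_X$ and $\pi_Y$ in the formula for $f_*(\alpha)$ are swapped --- it should read $(\pi_Y)_*(\pi_X^*\alpha\wedge[\Gamma_f])$.
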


The following result is a restatement of a result of Dinh and Sibony (section 5 in \cite{dinh-sibony2}):
\begin{theorem}
Let $\theta $ be a smooth function on $X\times Y$ so that $supp(\theta )\cap \Gamma _f\subset \Gamma _f-\mathcal{C}_f$. Then for any $DSH^p$ current $T$ on $Y$,
$(\pi _X)_*(\theta [\Gamma _f]\wedge \pi _Y^{*}(T))$ is well-defined  (see also \cite{meo}).
\label{TheoremLocalPullbackForPositiveClosedCurrents}\end{theorem}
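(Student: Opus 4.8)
The plan is to turn the statement into a local construction of the wedge product $[\Gamma_f]\wedge\pi_Y^*(T)$ on a neighborhood of $supp(\theta)$, followed by a push-forward along the proper projection $\pi_X$. First I would reduce to the case of a positive current: writing the $DSH^p$ current as $T=T_1-T_2$ with $T_i$ positive and $dd^cT_i=\Omega_i^+-\Omega_i^-$, $\Omega_i^\pm$ positive closed, the operation $T\mapsto\theta[\Gamma_f]\wedge\pi_Y^*(T)$ will be bilinear, so it suffices to define it for $T\geq 0$, keeping the bounds coming from the $\Omega_i^\pm$ in reserve in case they are needed to control masses.

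The key point is the hypothesis $supp(\theta)\cap\Gamma_f\subset\Gamma_f-\mathcal{C}_f$. Since $\mathcal{C}_f\subset\Gamma_f$ is analytic and $X\times Y$ is compact, $supp(\theta)$ is a compact set disjoint from $\mathcal{C}_f$, hence admits an open neighborhood $W_0$ with $\overline{W_0}\cap\mathcal{C}_f=\emptyset$; on $W_0$ we have $\Gamma_f\cap W_0=(\Gamma_f-\mathcal{C}_f)\cap W_0$, so $\pi_Y$ restricted to $\Gamma_f\cap W_0$ has fibers of the expected dimension $\dim X-\dim Y$. I would then cover $W_0$ by finitely many product coordinate charts $W=W'\times W''$ in which $\pi_Y$ is the projection onto $W''\subset\mathbb{C}^{\dim Y}$. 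In such a chart $\pi_Y|_{\Gamma_f\cap W}$ is equidimensional, dominant onto an open subset of $W''$, and a submersion off a thin analytic set, while $\pi_Y^*(T)|_W$ is constant along the fibers of the projection; hence the restriction of $\pi_Y^*(T)$ to $\Gamma_f\cap W$ — i.e.\ the product $[\Gamma_f]\wedge\pi_Y^*(T)$ in $W$ — is defined by Federer slicing of $[\Gamma_f]$ along $\pi_Y$, the slices existing for almost every fiber and being assembled using the positive trace measure of $T$. This is exactly the local pullback construction of Dinh and Sibony in section $5$ of \cite{dinh-sibony2} (see also \cite{meo}). By uniqueness of the slice construction these local positive currents agree on overlaps, so they glue; multiplying by the smooth cutoff $\theta$ and extending by zero outside $W_0$ yields a well-defined current $\theta[\Gamma_f]\wedge\pi_Y^*(T)$ of compact support on $X\times Y$.

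Finally, since $X\times Y$ is compact, $\pi_X$ is proper, so $(\pi_X)_*$ sends $\theta[\Gamma_f]\wedge\pi_Y^*(T)$ to a well-defined current on $X$; independence of the chosen charts and of the partition of unity follows from the local uniqueness already used, and bilinearity in $T$ recovers the general $DSH^p$ case. I expect the main obstacle to be the step asserting that the local wedge product is a bona fide current of locally finite mass near the points of $\Gamma_f-\mathcal{C}_f$ where $\pi_Y|_{\Gamma_f}$ is not a submersion, and near $\mathrm{Sing}(\Gamma_f)$: this requires the slicing machinery to be applied with care, exploiting that $\pi_Y|_{\Gamma_f-\mathcal{C}_f}$ has fibers of exactly the expected dimension (so the bad locus has the right codimension) together with the positivity of $T$ — and, if necessary, the $dd^c$-bounds carried by the $DSH$ structure — to rule out escape of mass. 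The remaining steps, gluing, the cutoff, and the proper push-forward, are routine.
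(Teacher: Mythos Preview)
Your construction is essentially the Dinh--Sibony local pullback from \cite{dinh-sibony2}, and as such it does produce a legitimate current. The gap is that in this paper ``well-defined'' has a specific technical meaning: it refers to Definitions~\ref{DefinitionPullbackCurrentsByMeromorphicMaps} and~\ref{DefinitionPullbackDdcOfOrderSCurrents}, i.e.\ one must show that for \emph{every} good approximation scheme $K_n=K_n^+-K_n^-$ by $C^2$ forms, the sequence $(\pi_X)_*(\theta[\Gamma_f]\wedge\pi_Y^*(K_n(T)))$ converges, and that the limit is independent of the scheme. The paper's own proof is devoted entirely to this point: it invokes Lemma~3.3 of \cite{dinh-sibony2} (continuity of the local pullback under DSH convergence of \emph{positive} currents) and then, given two schemes $K_n$ and $H_n$, applies it to the positive sequences $K_n^+(T)+H_n^-(T)$ and $K_n^-(T)+H_n^+(T)$, which share a DSH limit, to conclude that $f^*(K_n(T))$ and $f^*(H_n(T))$ agree in the limit on $\Gamma_f-\mathcal{C}_f$.

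Your slicing argument, even if carried out flawlessly, only builds the candidate limit; it does not show that the approximation schemes all reach it. To close the argument you would still need a continuity statement of the form ``if $S_n\to S$ in DSH with $S_n$ positive $C^2$, then the local pullbacks converge'' (this is Lemma~3.3 in \cite{dinh-sibony2}), together with the positivity trick above to handle the signed decomposition $K_n^+-K_n^-$. Without that bridge, the proposal proves a different (and already known) statement rather than the one required here.
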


The following result is a generalization of a result proved by Dinh and Sibony in the case of projective spaces (see Proposition 5.2.4 in \cite{dinh-sibony4})
\begin{theorem}
Let $X$ and $Y$ be two compact K\"ahler manifolds. Let $f:X\rightarrow Y$ be a dominant meromorphic map. Assume that $\pi _X(\mathcal{C}_f)$ is of
codimension $\geq p$. Then $f^{\sharp}(T)$ is well-defined for any positive closed $(p,p)$ current $T$ on $Y$. Moreover the following continuity holds:  if $T_j$  are positive closed
$(p,p)$ currents weakly converging to $T$ then $f^{\sharp}(T_j)$ weakly converges to $f^{\sharp}(T)$. \label{TheoremInterestingExample1}\end{theorem}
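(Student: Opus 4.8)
The plan is to reduce the statement to the local pullback result of Theorem~\ref{TheoremLocalPullbackForPositiveClosedCurrents} together with a partition-of-unity argument, and then to verify the convergence defining $f^{\sharp}(T)$ directly from Definition~\ref{DefinitionPullbackDdcOfOrderSCurrents}. First I would fix a good approximation scheme $K_n^{\pm}$ by $C^{s+2}$ forms and, for a test form $\alpha$ on $X$, rewrite $\int_Y T\wedge K_n(f_*(\alpha))$ using the definition $f_*(\alpha)=(\pi_Y)_*(\pi_X^*(\alpha)\wedge[\Gamma_f])$; pushing the computation up to the graph, one wants to make sense of $(\pi_X)_*\bigl([\Gamma_f]\wedge\pi_Y^*(T)\bigr)\wedge\alpha$. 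The hypothesis that $\pi_X(\mathcal{C}_f)$ has codimension $\geq p$ is exactly what is needed so that this last expression is meaningful: the would-be pullback current, if it exists, has support away from the locus where $\pi_Y|_{\Gamma_f}$ degenerates in a way that could destroy the intersection $[\Gamma_f]\wedge\pi_Y^*(T)$ for a $(p,p)$ current $T$.

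Concretely, I would choose a cutoff: let $\theta$ be a smooth function on $X\times Y$ that is identically $1$ on a neighborhood of $\mathrm{supp}(\theta')\cap \Gamma_f$... more precisely, write $[\Gamma_f]\wedge\pi_Y^*(T)=\theta\,[\Gamma_f]\wedge\pi_Y^*(T)+(1-\theta)[\Gamma_f]\wedge\pi_Y^*(T)$ where $\mathrm{supp}(1-\theta)\cap\Gamma_f\subset \Gamma_f-\mathcal{C}_f$, so that the second term is well-defined by Theorem~\ref{TheoremLocalPullbackForPositiveClosedCurrents}. For the first term, supported near $\mathcal{C}_f$, I use the codimension hypothesis: on a small neighborhood of $\pi_X(\mathcal{C}_f)$, which has codimension $\geq p$ in $X$, a positive closed $(p,p)$ current has no mass on sets of dimension $<\dim X-p$ issues — more carefully, I would estimate $\int_Y T\wedge K_n(f_*(\theta\alpha))$ and show it tends to $0$, using that the mass of $\pi_Y^*(T)$ concentrated over the relevant locus is controlled by the codimension, together with the uniform mass bounds (property~2 of Definition~\ref{DefinitionGoodApproximation}) on $K_n^{\pm}$. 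This is the step I expect to be the main obstacle: one must show that the contribution of the approximants $K_n(f_*(\alpha))$ over a neighborhood of $\pi_X(\mathcal{C}_f)$ is negligible in the limit, uniformly, and independently of the chosen good approximation scheme; the codimension-$\geq p$ assumption should let a dimension/support count (a Federer-type vanishing for positive closed currents pulled into the graph) do exactly this.

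Having shown the limit exists and equals $\int_X S\wedge\alpha$ for a current $S$ that can be described as $(\pi_X)_*\bigl((1-\theta)[\Gamma_f]\wedge\pi_Y^*(T)\bigr)$ (the result being independent of the particular $\theta$ by the same negligibility argument applied to two choices), I conclude $f^{\sharp}(T)=S$ is well-defined. Positivity and closedness of $S$ follow from Lemma~\ref{LemmaGoodPropetiesOfPullbackOperator}(iv) once well-definedness is known; the representation also makes $S$ manifestly positive since $(1-\theta)[\Gamma_f]\wedge\pi_Y^*(T)$ is positive on the region where it is defined and the pushforward $(\pi_X)_*$ preserves positivity.

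For the continuity statement, suppose $T_j\rightharpoonup T$ are positive closed $(p,p)$ currents. Then $\{T_j\}\to\{T\}$ in cohomology only along a subsequence in general, but positivity gives uniformly bounded masses, so after extracting a subsequence $f^{\sharp}(T_j)\rightharpoonup S'$ for some positive closed current $S'$; I would identify $S'=f^{\sharp}(T)$ by testing against smooth $\alpha$ and passing to the limit in $\int_Y T_j\wedge K_n(f_*(\alpha))$, interchanging the $j$-limit and the $n$-limit. The interchange is justified because the negligibility estimate near $\pi_X(\mathcal{C}_f)$ is uniform in $j$ (the mass bound on $T_j$ is uniform) and because away from $\mathcal{C}_f$ the form $K_n(f_*(\alpha))$ is, for $n$ large, a fixed smooth form against which weak convergence $T_j\rightharpoonup T$ applies. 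Since every subsequential limit is $f^{\sharp}(T)$, the full sequence converges, completing the proof.
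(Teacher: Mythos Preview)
Your proposal has a genuine gap at the key step. You claim that the contribution of the approximants near $\pi_X(\mathcal{C}_f)$ is negligible in the limit, i.e.\ that $\int_Y T\wedge K_n(f_*(\theta\alpha))\to 0$ for a cutoff $\theta$ supported near $\mathcal{C}_f$. This is false in general: a positive closed $(p,p)$ current can perfectly well charge a variety of codimension $p$, and indeed $f^{\sharp}(T)$ can be entirely supported on $\pi_X(\mathcal{C}_f)$. The paper's own example $J_X^{\sharp}[\widetilde{\Sigma_{0,1}}]=-[\widetilde{\Sigma_{2,3}}]$ illustrates this: $\widetilde{\Sigma_{2,3}}\subset\pi_1(\mathcal{C}_{J_X})$, so the entire pullback sits over the critical projection. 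Your ``Federer-type vanishing for positive closed currents'' at bidegree $(p,p)$ on a codimension-$p$ set simply does not hold.

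The paper avoids this by dropping one bidegree. Instead of working with $T$ directly, it subtracts a smooth representative $\theta$ of $\{T\}$ and writes $T-\theta=dd^c R$ with $R$ a $DSH^{p-1}$ current, then shows $f^{\sharp}(R)$ is well-defined. Two ingredients make this work. First, mass bounds near $\pi_X(\mathcal{C}_f)$ are obtained not by negligibility but by an integration-by-parts trick using weak $p$-pseudoconvexity: one chooses a smooth $(\dim X-p,\dim X-p)$ form $\Theta$ with $dd^c\Theta\geq\omega_X^{\dim X-p+1}$ on a neighborhood $V$ of $\pi_X(\mathcal{C}_f)$, so that $\|f^*(R_{i,n})\|_V$ is controlled by $\int_X f^*(dd^c R_{i,n})\wedge\Theta$, which is bounded cohomologically. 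Second, uniqueness of the limit follows because any two cluster points of $f^*(K_n(R))$ agree on $X\setminus\pi_X(\mathcal{C}_f)$ by the local pullback theorem, and their difference is a $\mathbb{C}$-flat $(p-1,p-1)$ current supported on a set of codimension $\geq p$; the Federer-type support theorem for $\mathbb{C}$-flat currents (Bassanelli) then forces it to vanish. The point is that this support theorem kills $(p-1,p-1)$ currents on a codimension-$p$ set, not $(p,p)$ currents---which is exactly why the passage through the quasi-potential $R$ is essential and why your direct approach at bidegree $(p,p)$ cannot succeed.
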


{\bf Example 1}: In \cite{bedford-kim}, Bedford and Kim studied the linear quasi-automorphisms. These are birational selfmaps $f$ of rational
$3$-manifolds $X$ so that both $f$ and $f^{-1}$ have no exceptional hypersurfaces. Hence we can apply Theorem \ref{TheoremInterestingExample1} to
pullback and pushforward any positive closed $(2,2)$ current on $X$. The map $J_X$ in Section 4  is also a quasi-automorphism.

Below is a more general result, dealing with the case when the current $T$ is good (say continuous) outside a closed set $A$ whose preimage is not big.
\begin{theorem}
Let $X$ and $Y$ be two compact K\"ahler manifolds. Let $f:X\rightarrow Y$ be a dominant meromorphic map. Let $A\subset Y$ be a closed subset so that $f^{-1}(A)\cap \pi _X(\mathcal{C}_f)\subset V$ where $V$ is an analytic subvariety of $X$ having codim $\geq p$.  If $T$ is a positive closed $(p,p)$-current on
$Y$ which is continuous on $Y-A$, then $f^{\sharp}(T)$ is well-defined. Moreover, the following continuity holds: If $T_n^{\pm}$ are positive closed continuous $(p,p)$ forms so that $||T_n^{\pm}||$ are uniformly bounded, $T_n^+-T_n^{-}\rightharpoonup T$, and $T_n^{\pm}$ locally uniformly converges on $Y-A$, then $f^*(T_n^+-T_n^-)\rightharpoonup f^{\sharp}(T)$.
 \label{TheoremInterestingExample}\end{theorem}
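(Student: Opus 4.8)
The plan is to reduce both assertions to a single statement about approximating sequences, to prove convergence of the pull-backs away from the bad set $V$, and then to control the part of the limit that can concentrate on $V$ by means of its codimension.

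First I would reduce as follows. Given a good approximation scheme by $C^{s+2}$ forms, put $T_n^{\pm}=K_n^{\pm}(T)$. Since $T$ is positive, closed and continuous on $Y-A$, properties (2), (3) and (4) of Definition \ref{DefinitionGoodApproximation} say that the $T_n^{\pm}$ are positive closed $C^{s+2}$ forms with uniformly bounded masses, that $T_n^+-T_n^-\rightharpoonup T$, and that $T_n^{\pm}$ converge locally uniformly on $Y-A$ --- that is, precisely a sequence of the kind appearing in the continuity statement. So it suffices to prove: for every sequence of positive closed continuous $(p,p)$ forms $T_n^{\pm}$ with uniformly bounded masses such that $T_n^+-T_n^-\rightharpoonup T$ and $T_n^{\pm}$ converge locally uniformly on $Y-A$, the currents $f^*(T_n^+-T_n^-)$ converge weakly to a limit that does not depend on the chosen sequence. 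This simultaneously gives the well-definedness of $f^{\sharp}(T)$ in the sense of Definitions \ref{DefinitionPullbackCurrentsByMeromorphicMaps}, \ref{DefinitionPullbackDdcOfOrderSCurrents} and the stated continuity.

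Next I would establish convergence on $X-V$. The hypothesis $f^{-1}(A)\cap\pi_X(\mathcal{C}_f)\subset V$ gives the open cover $X-V\subset (X-\pi_X(\mathcal{C}_f))\cup(X-f^{-1}(A))$, both pieces being open because $\mathcal{C}_f$ and $\pi_Y^{-1}(A)\cap\Gamma_f$ are compact. Over $X-\pi_X(\mathcal{C}_f)$ one writes, locally near a point $x$, $f^*(T_n^+-T_n^-)=(\pi_X)_*(\theta[\Gamma_f]\wedge\pi_Y^*(T_n^+-T_n^-))$ for a smooth cut-off $\theta$ that equals $1$ near the fibre $\pi_X^{-1}(x)\cap\Gamma_f$ and has $supp(\theta)\cap\Gamma_f\subset\Gamma_f-\mathcal{C}_f$; then Theorem \ref{TheoremLocalPullbackForPositiveClosedCurrents}, together with the weak continuity of the local pull-back operator it provides away from $\mathcal{C}_f$, yields convergence to $(\pi_X)_*(\theta[\Gamma_f]\wedge\pi_Y^*T)$, which depends only on $T$. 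Over $X-f^{-1}(A)$ the map $\pi_Y$ sends the relevant part of $\Gamma_f$ into $Y-A$, where $\pi_Y^*T_n^{\pm}\to\pi_Y^*T$ locally uniformly; hence $[\Gamma_f]\wedge\pi_Y^*T_n^{\pm}\rightharpoonup[\Gamma_f]\wedge\pi_Y^*T$ there, and pushing forward by the proper map $\pi_X$ gives convergence of $f^*(T_n^{\pm})$, hence of $f^*(T_n^+-T_n^-)$, again to something depending only on $T$. Thus $f^*(T_n^+-T_n^-)$ converges on $X-V$ to a sequence-independent current $G$. Moreover the $f^*(T_n^{\pm})=(\pi_X)_*([\Gamma_f]\wedge\pi_Y^*T_n^{\pm})$ are positive and closed with masses bounded by a constant multiple of $||T_n^{\pm}||$, hence uniformly bounded; so $f^*(T_n^+-T_n^-)$ has weakly convergent subsequences, any limit $L$ is closed, restricts to $G$ on $X-V$, and --- since weak convergence of closed currents on a compact manifold implies convergence of cohomology classes and $\{T_n^+-T_n^-\}\to\{T\}$ --- satisfies $\{L\}=f^*\{T\}$.

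The hard part will be the uniqueness of $L$. Given two subsequential limits $L,L'$ (possibly from different approximating sequences), by the previous step they agree on $X-V$ and carry the same cohomology class, so $R:=L-L'$ is a closed $(p,p)$ current of finite mass, hence normal, supported on the analytic set $V$ of codimension $\geq p$. If $V$ has codimension $>p$ everywhere, $R=0$ at once, since a normal current of dimension $2(dim(X)-p)$ supported on a set of strictly smaller Hausdorff dimension vanishes (Federer's support theorem); this already settles, for instance, the situation of Theorem \ref{TheoremInterestingExample1}. In general the support theorem and $dR=0$ give $R=\sum_i c_i[V_i]$ over the codimension-$p$ irreducible components $V_i$ of $V$, with $\sum_i c_i\{[V_i]\}=0$, and the real obstacle is to show every $c_i=0$. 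For a component $V_i$ not contained in $\pi_X(\mathcal{C}_f)\cap f^{-1}(A)$ this is immediate: a generic point of $V_i$ lies outside $\pi_X(\mathcal{C}_f)$ or outside $f^{-1}(A)$, and the two arguments of the previous step show that near such a point every limit equals one and the same current, so $R$ vanishes there and $c_i=0$. The delicate case is a component $V_i\subset\pi_X(\mathcal{C}_f)\cap f^{-1}(A)$; here I would argue that no mass of the positive closed currents $f^*(T_n^{\pm})$ escapes to $V$ --- their masses equal the cohomological numbers $f^*\{T_n^{\pm}\}\cdot\{\omega_X^{dim(X)-p}\}$ and the classes $f^*\{T_n^{\pm}\}$ stay in a bounded slab of the pseudoeffective cone --- and combine this with the equality of the fixed cohomological data $\{L\}=f^*\{T\}$ and a Lelong-number computation near a generic smooth point of $V_i$ to pin down $c_i$. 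I expect this equidimensional case to be the only genuinely delicate point; everything else is routine. Once uniqueness is established, the common limit is by definition $f^{\sharp}(T)$, which then satisfies Definitions \ref{DefinitionPullbackCurrentsByMeromorphicMaps}--\ref{DefinitionPullbackDdcOfOrderSCurrents} and the continuity claimed in the statement.
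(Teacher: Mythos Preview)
Your overall architecture---reduce to sequences, prove convergence off a small set, then argue uniqueness via a support theorem---is reasonable, but the uniqueness step contains a real gap that the paper's proof sidesteps by a different device.

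The paper does \emph{not} work at the $(p,p)$ level. It writes $T-\theta=dd^c(R^+)-dd^c(R^-)$ with $R^{\pm}$ positive $DSH^{p-1}$ currents that are continuous on $Y-A$ (this refinement of the $dd^c$-lemma is Proposition~2.1 in \cite{dinh-sibony5}), and then shows $f^{\sharp}(R^{\pm})$ are well-defined by repeating the argument of Theorem~\ref{TheoremInterestingExample1}: one checks that $f^*(K_n^{\pm}(R^{\pm}))$ have bounded mass outside any neighbourhood of $f^{-1}(A)\cap\pi_X(\mathcal C_f)$, using a cutoff near $A$ together with the local uniform convergence of $K_n^{\pm}(R^{\pm})$ on $Y-A$. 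The crucial payoff is at the uniqueness stage: any ambiguity $\tau-\tau'$ is now a $\mathbb C$-flat $(p-1,p-1)$ current supported in $V$, and since $\mathrm{codim}\,V\ge p>p-1$, Bassanelli's Federer-type support theorem kills it outright. No cohomological or Lelong-number bookkeeping is needed.

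By contrast, working directly with $(p,p)$ approximants as you do, the difference $R=L-L'$ is a closed normal $(p,p)$ current supported on a codimension-$p$ variety, so the structure theorem only gives $R=\sum_i c_i[V_i]$ with $\sum_i c_i\{[V_i]\}=0$, and you must still force each $c_i=0$. Your handling of this is not a proof: every $V_i$ carrying nonzero $c_i$ is contained in $supp(R)\subset\pi_X(\mathcal C_f)\cap f^{-1}(A)$, so the ``easy'' case never applies; and the suggested Lelong-number/cohomology argument is only a hope---the classes $\{[V_i]\}$ may be linearly dependent, and there is no mechanism offered to control the generic Lelong numbers of the separate subsequential limits of $f^*(T_n^{\pm})$ along $V_i$. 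This is exactly the difficulty the paper's passage to the $(p-1,p-1)$ potential is designed to avoid. If you want to keep a direct $(p,p)$ argument you would need an independent idea here; otherwise, lift to quasi-potentials as the paper does.
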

When $\pi _X(\mathcal{C}_f)$ has codimension $\geq p$, then we can choose $A=Y$ in Theorem \ref{TheoremInterestingExample}, and thus recover Theorem \ref{TheoremInterestingExample1}.

As a consequence, we have the following result on pulling back of varieties:
\begin{corollary}
Let $f,X,Y$ be as in Theorem \ref{TheoremInterestingExample}. Let $V$ be an analytic variety of $Y$ of codim $p$. Assume that $f^{-1}(V)$ has codim $\geq p$. Then $f^{\sharp}[V]$ is well-defined.

\label{CorollaryPullBackVariety}\end{corollary}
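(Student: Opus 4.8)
The plan is to deduce Corollary \ref{CorollaryPullBackVariety} from Theorem \ref{TheoremInterestingExample} by choosing an appropriate closed set $A$ and checking that the current $[V]$ fits the hypotheses. First I would take $T=[V]$, the current of integration on the analytic variety $V$ of codimension $p$; this is a positive closed $(p,p)$-current on $Y$. The natural choice is $A=V$ itself, which is a closed (indeed analytic) subset of $Y$. On $Y-V$ the current $[V]$ vanishes identically, hence is trivially a continuous $(p,p)$-form there (the zero form), so the continuity hypothesis of Theorem \ref{TheoremInterestingExample} on $Y-A$ is satisfied.

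Next I would verify the condition $f^{-1}(A)\cap \pi_X(\mathcal{C}_f)\subset V'$ with $V'$ an analytic subvariety of $X$ of codimension $\geq p$. Here $f^{-1}(A)=f^{-1}(V)$, which by hypothesis has codimension $\geq p$; moreover $f^{-1}(V)=\pi_X(\pi_Y^{-1}(V)\cap \Gamma_f)$ is the image under a proper holomorphic map of an analytic set, hence is itself an analytic subvariety of $X$ (alternatively, one may replace it by its Zariski closure, still of codimension $\geq p$). Then $f^{-1}(A)\cap \pi_X(\mathcal{C}_f)\subset f^{-1}(V)$, and we may take $V'=f^{-1}(V)$ (or its Zariski closure), which has codimension $\geq p$ as required. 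With both hypotheses of Theorem \ref{TheoremInterestingExample} verified, that theorem immediately yields that $f^{\sharp}([V])$ is well-defined, which is exactly the assertion of the corollary.

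The only genuinely delicate point — the step I expect to be the main obstacle — is the claim that $f^{-1}(V)$ is an honest analytic subvariety (so that it can serve as the $V$ of Theorem \ref{TheoremInterestingExample}, which requires an analytic subvariety of codimension $\geq p$, not merely a closed set of that codimension). This follows from properness of $\pi_Y|_{\Gamma_f}$ and Remmert's proper mapping theorem applied to $\pi_X|_{\Gamma_f}$ restricted to the analytic set $\pi_Y^{-1}(V)\cap\Gamma_f$; one must also note that taking this image does not decrease codimension below $p$, which is precisely the standing hypothesis $\mathrm{codim}\, f^{-1}(V)\geq p$. Everything else is a direct substitution into the already-proved Theorem \ref{TheoremInterestingExample}.
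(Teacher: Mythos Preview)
Your proposal is correct and follows exactly the route the paper intends: the corollary is stated in the paper immediately after Theorem \ref{TheoremInterestingExample} with the phrase ``As a consequence,'' and no separate proof is given, so the intended argument is precisely your substitution $A=V$, $T=[V]$. Your additional care in verifying via Remmert's proper mapping theorem that $f^{-1}(V)=\pi_X(\pi_Y^{-1}(V)\cap\Gamma_f)$ is genuinely analytic (so it can serve as the subvariety in the hypothesis of Theorem \ref{TheoremInterestingExample}) is a point the paper leaves implicit; note that the relevant properness is that of $\pi_X|_{\Gamma_f}$, which holds simply because $\Gamma_f\subset X\times Y$ is compact.
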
 The assumptions in Corollary \ref{CorollaryPullBackVariety} are optimal, as can be seen from

{\bf Example 2}:  Let $Y=$ a compact K\"ahler $3$-fold, and let $L_0$ be an irreducible smooth curve in $Y$. Let $\pi :X\rightarrow Y$ be the blowup of
$Y$ along $L_0$. If $L$ is an irreducible curve in $Y$ which does not coincide with $L_0$ then $\pi ^{-1}(L)$ has dimension $1$, hence $\pi ^{\sharp}[L]$
is well-defined. In contrast, it is expected that $\pi ^{\sharp}[L_0]$ is  not well-defined. One explanation (which is communicated to us by Professor
Tien Cuong Dinh, see also the introduction in \cite{alessandrini-bassanelli2}) is that if $\pi ^{\sharp}[L_0]$ was to be defined, then it should be a
special $(2,2)$ current on the hypersurface $\pi ^{-1}(L_0)$. However, we have too many $(2,2)$ currents on that hypersurface to point out a special one.

We have the following example of losing positivity
\begin{corollary}
Let $X$ be the blowup of $\mathbb{P}^3$ along $4$ points $e_0=[1:0:0:0],e_1=[0:1:0:0],e_2=[0:0:1:0], e_3=[0:0:0:1]$. Let $J:\mathbb{P}^3\rightarrow
\mathbb{P}^3$ be the Cremona map $J[x_0:x_1:x_2:x_3]=[1/x_0:1/x_1:1/x_2:1/x_3]$, and let $J_X$ be the lifting of $J$ to $X$.

For $0\leq i\not= j\leq 3$, let $\Sigma _{i,j}$ be the line in $\mathbb{P}^3$ consisting of points $[x_0:x_1:x_2:x_3]$ where $x_i=x_j=0$. Let
$\widetilde{\Sigma _{i,j}}$ be the strict transform of $\Sigma _{i,j}$ in $X$.

For any positive closed $(2,2)$ current $T$, $J_X^{\sharp}(T)$ is well-defined. Moreover, $J_X^{\sharp}([\widetilde{\Sigma _{0,1}}])= -
[\widetilde{\Sigma _{2,3}}]$ and $J_X^{\sharp}([\widetilde{\Sigma _{2,3}}])= - [\widetilde{\Sigma _{0,1}}]$. \label{CorollaryTheMapJ}\end{corollary}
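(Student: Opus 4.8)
The plan is to verify the hypotheses of Theorem \ref{TheoremInterestingExample} for the map $J_X$ with $p=2$, establish well-definedness of $J_X^{\sharp}$ on all positive closed $(2,2)$ currents, and then compute the pullbacks of the two curve classes directly. First I would recall the geometry of the Cremona involution $J$ on $\mathbb{P}^3$: the indeterminacy locus of $J$ consists of the six lines $\Sigma_{i,j}$, and $J$ contracts each of the four coordinate hyperplanes $\{x_i=0\}$ to the point $e_i$. On the blowup $X$ of the four points $e_0,\dots,e_3$, the lifted map $J_X$ becomes a pseudo-automorphism: it has no exceptional hypersurfaces (the exceptional divisors over the $e_i$ are permuted with the strict transforms of the coordinate planes), so the indeterminacy set $\pi_X(\mathcal{C}_{J_X})$ has codimension $\geq 2$. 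Hence Theorem \ref{TheoremInterestingExample1} (equivalently Theorem \ref{TheoremInterestingExample} with $A=Y$) applies and gives that $J_X^{\sharp}(T)$ is well-defined for every positive closed $(2,2)$ current $T$, together with weak continuity.

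Next I would compute $J_X^{\sharp}[\widetilde{\Sigma_{0,1}}]$. Since $\widetilde{\Sigma_{0,1}}$ is disjoint from the indeterminacy locus of $J_X$ (or meets it only in a set of codimension $\geq 2$, so that $f^{-1}$ of it is still of the right dimension), the pullback is the honest proper-transform cycle: $J_X^{\sharp}[\widetilde{\Sigma_{0,1}}] = [J_X^{-1}(\widetilde{\Sigma_{0,1}})]$ counted with multiplicity, possibly corrected by components lying in exceptional loci. The key geometric fact is that $J$ maps the line $\Sigma_{0,1}=\{x_0=x_1=0\}$ to the line $\Sigma_{2,3}=\{x_2=x_3=0\}$: indeed on $\Sigma_{0,1}$ we have $x_0=x_1=0$, and formally $J$ sends such a point to $[\infty:\infty:1/x_2:1/x_3]$, which after clearing denominators is $[x_2x_3 x_1 : x_2 x_3 x_0 : \dots]$... more carefully, one resolves $J$ along $\Sigma_{0,1}$ by a blowup and checks that the strict transform of $\Sigma_{0,1}$ maps onto $\Sigma_{2,3}$. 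So set-theoretically $J_X^{-1}(\widetilde{\Sigma_{2,3}})$ contains $\widetilde{\Sigma_{0,1}}$; by symmetry of $J$ (it is an involution) and a local computation of the map on a chart around a generic point of $\widetilde{\Sigma_{0,1}}$, the multiplicity is $1$.

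The heart of the matter—and the step I expect to be the main obstacle—is pinning down the \emph{sign}: explaining why $J_X^{\sharp}[\widetilde{\Sigma_{0,1}}]$ equals $-[\widetilde{\Sigma_{2,3}}]$ rather than $+[\widetilde{\Sigma_{2,3}}]$. This cannot come from a naive cycle-theoretic proper transform, which would always be effective; it must come from the current-level definition via good approximation schemes, where the approximating smooth forms $T_n^{\pm}$ have negative parts that concentrate and survive in the limit because the fibers of $g$ over $\widetilde{\Sigma_{0,1}}$ through the exceptional divisors of $X$ contribute a compensating negative mass (an excess-intersection phenomenon of the type that makes $J_X$ fail to be a genuine automorphism). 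Concretely, I would pick an explicit Kähler form $\omega$ and a smooth closed $(2,2)$ approximation of $[\widetilde{\Sigma_{0,1}}]$, compute $J_X^{*}$ of it using the graph formula $(\pi_1)_*(\pi_2^{*}(\cdot)\wedge[\Gamma_{J_X}])$ in local coordinates near the exceptional divisors, and show the limiting current is $-[\widetilde{\Sigma_{2,3}}]$; alternatively, and more cleanly, I would use Theorem \ref{TheoremPullbackByDesingularization} to replace $J_X$ by $g\circ\pi^{-1}$ for a desingularization $\widetilde{\Gamma}$ of the graph, compute $g^{\sharp}[\widetilde{\Sigma_{0,1}}]$ on $\widetilde{\Gamma}$ (where $g$ is holomorphic, so $g^{\sharp}=g^{*}$ is the honest pullback of the closed current, computable by intersection theory), and then push forward by $\pi_*$; the negative coefficient emerges because the full preimage $g^{-1}(\widetilde{\Sigma_{0,1}})$ decomposes into $\widetilde{\Sigma_{2,3}}$-over-$\widetilde\Gamma$ \emph{plus} components mapped by $\pi$ into the indeterminacy set of $J_X$ with the right multiplicities, and after $\pi_*$ and the cohomological bookkeeping ($\{f^{\sharp}[V]\}=f^{*}\{V\}$, where $J_X^{*}$ on $H^{2,2}$ is known from the Bedford–Kim / quasi-automorphism calculation) the surviving term over $\widetilde{\Sigma_{0,1}}$ carries coefficient $-1$. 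The second identity $J_X^{\sharp}[\widetilde{\Sigma_{2,3}}]=-[\widetilde{\Sigma_{0,1}}]$ follows by the symmetry $0\leftrightarrow 2$, $1\leftrightarrow 3$ of $J$, or from $J_X$ being an involution combined with part (ii) of Lemma \ref{LemmaGoodPropetiesOfPullbackOperator}.
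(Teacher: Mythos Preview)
Your well-definedness argument is correct and matches the paper: $J_X$ is a pseudo-automorphism, so $\pi_1(\mathcal C_{J_X})=\bigcup_{i<j}\widetilde{\Sigma_{i,j}}$ has codimension $2$, and Theorem~\ref{TheoremInterestingExample1} applies.

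For the sign, however, you are working much harder than necessary, and the two concrete routes you sketch are not actually arguments. The paper's proof is a two-line soft argument that you almost state but then bury. By Lemma~\ref{LemmaGoodPropetiesOfPullbackOperator}(iii), $\mathrm{supp}\,J_X^{\sharp}[\widetilde{\Sigma_{0,1}}]\subset J_X^{-1}(\widetilde{\Sigma_{0,1}})=\widetilde{\Sigma_{2,3}}$. From the proof of Theorem~\ref{TheoremInterestingExample1}, $J_X^{\sharp}[\widetilde{\Sigma_{0,1}}]$ is a difference of positive closed $(2,2)$ currents, hence normal; a normal $(2,2)$ current supported on an irreducible curve is a constant multiple of the integration current, so $J_X^{\sharp}[\widetilde{\Sigma_{0,1}}]=\lambda[\widetilde{\Sigma_{2,3}}]$ for some real $\lambda$. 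Now Lemma~\ref{LemmaGoodPropetiesOfPullbackOperator}(iv) gives $\lambda\{\widetilde{\Sigma_{2,3}}\}=J_X^*\{\widetilde{\Sigma_{0,1}}\}$ in $H^{2,2}(X)$, and the explicit matrix of $J_X^*$ on $H^{2,2}$ (computed in the paper from $J_X^*$ on $H^{1,1}$ and Poincar\'e duality) yields $J_X^*\{H^2-L_2-L_3\}=-\{H^2-L_0-L_1\}$, i.e.\ $\lambda=-1$. No local computation and no explicit approximation are needed; the cohomology is not ``bookkeeping'' but the entire determination of the sign.

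Your desingularization route has an actual gap. You write that on a resolution $\widetilde\Gamma$ the map $g$ is holomorphic, so ``$g^\sharp=g^*$ is the honest pullback, computable by intersection theory''. But the naive holomorphic pullback of a positive closed current is positive, and pushforward by $\pi$ preserves positivity, so this can never produce $-[\widetilde{\Sigma_{2,3}}]$ directly. The paper does give an alternative desingularization proof (Lemma~\ref{LemmaOtherGoodPropertyOfTheMapJ}), but it does \emph{not} proceed by computing a naive $g^*$; instead one lifts an arbitrary smooth approximation $T_n^+-T_n^-$ to the blowup $Y$ along all $\widetilde{\Sigma_{i,j}}$ (where $J_Y$ is an honest automorphism), passes to weak limits $\tau^\pm$, and then uses a cohomological argument on each exceptional $S_{i,j}\simeq\mathbb P^1\times\mathbb P^1$ to show that $\pi_*J_Y^*(\tau^+-\tau^-)$ is supported on $\widetilde{\Sigma_{2,3}}$; the coefficient is again read off from cohomology. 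So even the hard route ultimately rests on the same support-plus-cohomology mechanism you relegated to a parenthesis.
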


\begin{remark}
The map $J_X$ was given in Example 2.5 page 33 in \cite{guedj} where the author showed that the map $J_X^*:H^{2,2}(X)\rightarrow H^{2,2}(X)$ does not
preserve the cone of cohomology classes generated by positive closed $(2,2)$ currents.

In Lemma \ref{LemmaJXSquare}, it will be shown that $(J_X^{\sharp})^2(T)=T$ for any positive closed $(2,2)$ current $T$. Thus this example gives positive support to an open question
posed in Section 6.
\end{remark}

We conclude this subsection discussing pullback of a positive closed $(p,p)$ current $T$ in general. For $c>0$ define $E_c(T)=\{y\in Y: \nu (T,y)\geq
c\}$, where $\nu (T,y)$ is the Lelong number of $T$ at $y$ (see \cite{demailly} for definition). Then by the semi-continuity theorem of Siu (see
\cite{siu}, and also \cite{demailly}), $E_c(T)$ is an analytic subvariety of $Y$ of codimension $\geq p$. Moreover, we have a decomposition
\begin{eqnarray*}
T=R+\sum _{j=1}^{\infty}\lambda _j[V_j],
\end{eqnarray*}
where $\lambda _j\geq 0$, $V_j$ is an irreducible analytic variety of codimension $p$ and is contained in $E(T)=\cup _{c>0}E_c(T)$, and $R$ is a positive
closed current such that $E_c(R)$ has codimension $>p$ for all $c>0$. Note that $E(T)=$ the union of $E_c(T)$'s for rational numbers $c>0$, hence is a
(at most) countable union of analytic varieties.

\begin{theorem}
Notations are as above. Assume that for any irreducible variety $V$ of codimension $p$ contained in $E(T)$, then $f^{-1}(V)$ has codimension $\geq p$.
Then $f^{\sharp}(\sum _{j=1}^{\infty}\lambda _j[V_j])$ is well-defined and is equal to $\sum _{j=1}^{\infty}\lambda _jf^{\sharp}[V_j]$. Hence
$f^{\sharp}(T)$ is well-defined iff $f^{\sharp}(R)$ is well-defined. \label{TheoremUseSiuDecomposition}\end{theorem}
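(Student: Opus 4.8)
The plan is to reduce the statement to a pointwise/term-by-term application of the earlier results, principally Corollary \ref{CorollaryPullBackVariety} (equivalently Theorem \ref{TheoremInterestingExample}) together with the finite-order definition (Definition \ref{DefinitionPullbackDdcOfOrderSCurrents}) and the additivity property of good approximation schemes. First I would record that, by hypothesis, each irreducible component $V_j$ of codimension $p$ sitting inside $E(T)$ satisfies $f^{-1}(V_j)$ has codimension $\geq p$, so Corollary \ref{CorollaryPullBackVariety} applies to give that $f^{\sharp}[V_j]$ is well-defined for every $j$. The content of the theorem is then (a) that the infinite sum $\sum_j \lambda_j f^{\sharp}[V_j]$ converges to a current on $X$, (b) that this current equals $f^{\sharp}(\sum_j \lambda_j[V_j])$, and (c) the final ``iff'' statement, which is an immediate consequence of (a), (b) and linearity (Lemma \ref{LemmaGoodPropetiesOfPullbackOperator}(ii)) once one writes $T = R + \sum_j \lambda_j[V_j]$.

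For the convergence in (a), I would use a mass/cohomology bound. Set $S_N = \sum_{j=1}^N \lambda_j[V_j]$; these are positive closed $(p,p)$ currents increasing to $\sum_j\lambda_j[V_j]$, with uniformly bounded mass since $\{S_N\}\leq \{T\}$ in cohomology and mass of a positive closed current is controlled by its class paired with $\omega_Y^{\dim Y-p}$. Pulling back, $\{f^{\sharp}[V_j]\} = f^*\{[V_j]\}$ by Lemma \ref{LemmaGoodPropetiesOfPullbackOperator}(iv), and each $f^{\sharp}[V_j]$ is a positive current (it arises as a weak limit of $f^*(T_n^+ - T_n^-)$ which in the setting of Theorem \ref{TheoremInterestingExample} applied to $[V_j]$ — here $T=[V_j]$ is itself closed and the local analysis near the good part of the graph keeps positivity; alternatively $f^{\sharp}[V_j] = \pi_*(g^{\sharp}[V_j])$ via Theorem \ref{TheoremPullbackByDesingularization} and $g$ is holomorphic so $g^{\sharp}[V_j]$ is positive). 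Hence the partial sums $\sum_{j=1}^N \lambda_j f^{\sharp}[V_j]$ are positive currents on $X$ with uniformly bounded mass (bounded by $f^*\{T\}\cdot\{\omega_X^{\dim X-p}\}$), so they converge weakly to a positive current $\Phi := \sum_{j=1}^\infty \lambda_j f^{\sharp}[V_j]$ on $X$.

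For (b) I would verify that $\Phi$ satisfies the defining property of $f^{\sharp}(\sum_j\lambda_j[V_j])$ in Definition \ref{DefinitionPullbackDdcOfOrderSCurrents}: for a test form $\alpha$ and a good approximation scheme $K_n$ by $C^{s+2}$ forms, one must show $\lim_n \int_Y (\sum_j \lambda_j[V_j])\wedge K_n(f_*(\alpha)) = \int_X \Phi\wedge\alpha$. Since $K_n(f_*(\alpha))$ is a fixed $C^{s+2}$ form for each $n$, the pairing with the positive current $\sum_j\lambda_j[V_j]$ splits as an absolutely convergent sum $\sum_j \lambda_j\int_{V_j} K_n(f_*(\alpha))$ (dominated by $\|K_n(f_*(\alpha))\|_{C^0}$ times the uniformly bounded total mass), so I can interchange $\sum_j$ with $\lim_n$, reducing to the single-variety identity $\lim_n \int_Y [V_j]\wedge K_n(f_*(\alpha)) = \int_X f^{\sharp}[V_j]\wedge\alpha$, which is exactly what ``$f^{\sharp}[V_j]$ well-defined'' gives. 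The interchange of $\lim_n$ and $\sum_j$ is where I would be most careful: the honest justification is a dominated-convergence argument using that the masses $\|f^*(K_n(f_*\alpha)\text{-type terms})\|$ are uniformly bounded in both $n$ and the truncation index, which follows from the cohomological bound above plus property (1) (boundedness of $DSH$ norms) of the approximation scheme; this uniformity is the one genuinely technical point and the main obstacle, since without it the double limit need not commute. Once (a) and (b) are in place, (c) is formal: $f^{\sharp}(T)$ well-defined $\iff$ $f^{\sharp}(R) = f^{\sharp}(T) - \sum_j \lambda_j f^{\sharp}[V_j]$ well-defined, by Lemma \ref{LemmaGoodPropetiesOfPullbackOperator}(ii).
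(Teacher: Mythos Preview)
Your overall architecture---truncate to $W_N=\sum_{j\le N}\lambda_j[V_j]$, pull back, pass to the limit---is exactly the paper's. But there is a genuine error in part (a): the claim that each $f^{\sharp}[V_j]$ is positive is false under the hypotheses of the theorem. The map $J_X$ of Corollary~\ref{CorollaryTheMapJ} satisfies $J_X^{-1}(\widetilde{\Sigma}_{0,1})=\widetilde{\Sigma}_{2,3}$, a set of codimension~$2$, so the hypothesis of the theorem holds, yet $J_X^{\sharp}[\widetilde{\Sigma}_{0,1}]=-[\widetilde{\Sigma}_{2,3}]$. Your alternative route through Theorem~\ref{TheoremPullbackByDesingularization} also fails: that theorem only asserts that well-definedness of $g^{\sharp}(T)$ implies that of $f^{\sharp}(T)$, not the converse, and in the $J_X$ example $g^{-1}(\widetilde{\Sigma}_{0,1})$ is a divisor, so $g^{\sharp}[\widetilde{\Sigma}_{0,1}]$ is not covered by Corollary~\ref{CorollaryPullBackVariety}. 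Consequently your mass bound on $\sum_{j\le N}\lambda_j f^{\sharp}[V_j]$ via positivity collapses, and part~(a) as written does not establish convergence of the series.

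The paper sidesteps this entirely by invoking Theorem~\ref{TheoremContinuityPropertiesOfPullbackOperator} directly: with $T_N=W_N$ and $S_N=\sum_j\lambda_j[V_j]-W_N$, one has $-S_N\le \sum_j\lambda_j[V_j]-W_N\le S_N$ with $S_N$ positive closed and $\|S_N\|\to 0$, and each $f^{\sharp}(W_N)$ is well-defined with the \emph{same} $s=0$; the continuity theorem then yields both that $f^{\sharp}(\sum_j\lambda_j[V_j])$ exists and that it equals $\lim_N f^{\sharp}(W_N)=\sum_j\lambda_j f^{\sharp}[V_j]$, with no positivity of the individual $f^{\sharp}[V_j]$ needed. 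Your part~(b) is in fact groping toward the proof of Theorem~\ref{TheoremContinuityPropertiesOfPullbackOperator} in this special case: the ``main obstacle'' you identify (uniform control for the interchange of $\lim_n$ and $\sum_j$) is handled there by the sandwich estimate $|\int_Y S_N\wedge K_n^{\pm}(f_*\alpha)|\le A\|\alpha\|_{L^\infty}\|S_N\|$, which is uniform in $n$ because the pairing is cohomological. If you carry out~(b) along those lines you get both convergence and the identity at once, and part~(a) becomes superfluous; but as written, the positivity assertion is an error you should remove.
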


\subsection{Compatibility with previous works}

In this subsection we compare our results with the results in previous papers.

The pullback of positive closed $(1,1)$ currents was defined by Meo \cite{meo} for finite holomorphic maps between complex manifolds (not necessarily
compact or K\"ahler). Our definition coincides with his in the case of compact K\"ahler manifolds
\begin{corollary}
Let $X$ and $Y$ be two compact K\"ahler manifolds. Let $f:X\rightarrow Y$ be a dominant meromorphic map. Let $T$ be a positive closed $(1,1)$-current on
$Y$. Then $f^{\sharp}(T)$ is well-defined, and coincides with the usual definition. \label{CorollaryPositiveClosed11Currents}\end{corollary}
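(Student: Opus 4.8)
The plan is to reduce the statement to Theorem \ref{TheoremInterestingExample} by verifying its hypothesis with $p=1$. First I would recall the well-known fact that a positive closed $(1,1)$-current $T$ on a compact K\"ahler manifold can be written locally as $T = dd^c u$ for a plurisubharmonic (hence locally integrable, and in particular $L^1_{loc}$) potential $u$; globally, fixing a smooth representative $\theta$ of $\{T\}$, we have $T = \theta + dd^c \varphi$ for a $\theta$-plurisubharmonic function $\varphi$ on $Y$. The key point is that such $T$ automatically has small singular set: by Siu's semicontinuity theorem the upperlevel sets $E_c(T)$ are analytic of codimension $\geq 1$, but for the purposes of Theorem \ref{TheoremInterestingExample} what matters is that we may take $A = \emptyset$, or rather handle the potentials directly.

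The cleanest route is to invoke Theorem \ref{TheoremUseSiuDecomposition} together with Siu's decomposition $T = R + \sum_j \lambda_j [V_j]$, where each $V_j$ is an irreducible hypersurface (codimension $1$) and $R$ is positive closed with $E_c(R)$ of codimension $\geq 2$ for all $c>0$. For a codimension-one irreducible variety $V$, the preimage $f^{-1}(V) = \pi_X(\pi_Y^{-1}(V)\cap \Gamma_f)$ is an analytic subset of $X$, and since $f$ is dominant it cannot equal all of $X$; hence $f^{-1}(V)$ has codimension $\geq 1 = p$. Therefore the hypothesis of Theorem \ref{TheoremUseSiuDecomposition} is satisfied for every irreducible codimension-one $V$ in $E(T)$ (in fact for every such $V$ in $Y$), so $f^{\sharp}(\sum_j \lambda_j[V_j]) = \sum_j \lambda_j f^{\sharp}[V_j]$ is well-defined and it remains to show $f^{\sharp}(R)$ is well-defined. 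For $R$ the set $E_1(R)$ has codimension $\geq 2$; more generally the unbounded locus of a potential of $R$ is contained in a countable union of analytic sets of codimension $\geq 2$. Outside this set $R$ has locally bounded potentials, so $R$ is not merely continuous but the classical Bedford--Taylor theory applies; one then verifies that $f^{-1}$ of each codimension-$\geq 2$ piece meets $\pi_X(\mathcal{C}_f)$ (itself of codimension $\geq 1$) in a set of codimension $\geq 1$, so Theorem \ref{TheoremInterestingExample} with $p=1$ and a suitable choice of the exceptional set $A$ applies to give that $f^{\sharp}(R)$ is well-defined.

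Finally, to see that $f^{\sharp}(T)$ coincides with the usual definition, I would compare with the pullback $\pi_Y^*(T)\wedge[\Gamma_f]$ defined via potentials: writing $T = \theta + dd^c\varphi$ and using that $f^*\theta = g_*\pi^*(\text{desingularized } \theta\text{-form})$ makes sense classically, the pullback $f^{\sharp}(T) = f^*\theta + dd^c(\varphi\circ f)$ on the Zariski-open set where $f$ is holomorphic, and the standard fact that this extends by trivial (zero-mass) extension across $\pi_X(\mathcal{C}_f)$ identifies it with the Meo/Russakovskii--Shiffman pullback; the approximation-scheme definition produces the same current because the good approximation scheme may be taken to act on potentials (regularizing $\varphi$), and $dd^c K_n^+(\varphi) \to T$ weakly with pullbacks converging by the continuity part of Theorem \ref{TheoremInterestingExample}. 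The main obstacle is the last point of the decomposition argument, namely showing that $f^{\sharp}(R)$ is well-defined: one must control the intersection $f^{-1}(E(R)) \cap \pi_X(\mathcal{C}_f)$ and check it sits inside an analytic subvariety of $X$ of codimension $\geq 1$, which requires knowing that $f^{-1}$ does not collapse the (codimension $\geq 2$) singular locus of $R$ onto a hypersurface lying in $\pi_X(\mathcal{C}_f)$ — this is where the dominance of $f$ and a dimension count on $\Gamma_f$, exactly as in the proof of Theorem \ref{TheoremInterestingExample}, must be used carefully, and it is the reason the hypotheses of Corollary \ref{CorollaryPullBackVariety} are sharp for higher bidegree but automatically hold here for $(1,1)$.
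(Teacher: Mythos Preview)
Your route is far more elaborate than needed, and the part handling the residual current $R$ has a genuine gap. The paper's proof is a single observation: $\pi_X(\mathcal{C}_f)$ is a \emph{proper} analytic subvariety of $X$, hence automatically has codimension $\geq 1$, so Theorem~\ref{TheoremInterestingExample1} applies directly with $p=1$ and gives both the well-definedness of $f^{\sharp}(T)$ for every positive closed $(1,1)$ current and the continuity that identifies it with the usual (Meo/Dinh--Sibony) pullback. Equivalently, in Theorem~\ref{TheoremInterestingExample} you may simply take $A=Y$: the condition $f^{-1}(A)\cap \pi_X(\mathcal{C}_f)\subset V$ with $\operatorname{codim} V\geq 1$ is then just $\pi_X(\mathcal{C}_f)\subset \pi_X(\mathcal{C}_f)$. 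There is no need for Siu's decomposition, Theorem~\ref{TheoremUseSiuDecomposition}, or any analysis of $E(T)$.

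The gap in your argument is the treatment of $R$. You write that outside the (codimension~$\geq 2$) Lelong locus ``$R$ has locally bounded potentials, so $R$ is not merely continuous\ldots''; but a positive closed $(1,1)$ current with locally bounded potential is in general \emph{not} a continuous form---its coefficients are only measures. Theorem~\ref{TheoremInterestingExample} requires $T$ to be genuinely continuous on $Y\setminus A$, so your proposed choice of $A$ does not verify its hypothesis. You also never produce a single analytic subvariety of codimension $\geq 1$ containing $f^{-1}(A)\cap\pi_X(\mathcal{C}_f)$; a countable union of codimension-$\geq 2$ analytic sets (which is what $E(R)$ is) need not sit in any fixed analytic subvariety. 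All of this is unnecessary once you notice that the hypothesis of Theorem~\ref{TheoremInterestingExample1} is vacuous for $p=1$.
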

\begin{proof}
Since $\pi _X(\mathcal{C}_f)$ is a proper analytic subvariety of $X$, it has codimension $\geq 1$, thus we can apply Theorem
\ref{TheoremInterestingExample1}.
\end{proof}

The pullback of positive $dd^c$ closed $(1,1)$ currents were defined by Alessandrini - Bassanel \cite{alessandrini-bassanelli2} and Dinh -Sibony
\cite{dinh-sibony2} under several contexts. Our definition coincides with theirs in the case of compact K\"ahler manifolds
\begin{corollary}
Let $X$ and $Y$ be two compact K\"ahler manifolds. Let $f:X\rightarrow Y$ be a dominant meromorphic map. Let $T$ be a positive $dd^c$- closed $(1,1)$-current on
$Y$. Then $f^{\sharp}(T)$ is well-defined, and coincides with the usual definition. \label{CorollaryPositiveDD^cClosed11Currents}\end{corollary}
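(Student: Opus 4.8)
The plan is to reduce the statement for positive $dd^c$-closed $(1,1)$ currents to the general machinery already developed, by exhibiting a concrete good approximation scheme and checking that the limit in Definition \ref{DefinitionPullbackCurrentsByMeromorphicMaps} exists and agrees with the pullback used by Alessandrini--Bassanelli and Dinh--Sibony. First I would recall that a positive $dd^c$-closed $(1,1)$ current $T$ on $Y$ is in particular a $DSH^1(Y)$ current: write $T=T-0$ with $T_1=T$, $T_2=0$, and $dd^cT=\Omega^+-\Omega^-$ with $\Omega^{\pm}$ positive closed of the same mass (this is exactly the structure required in the definition of $DSH^1$). So $f^{\sharp}(T)$ makes sense to ask about via Definition \ref{DefinitionPullbackCurrentsByMeromorphicMaps}.

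The key geometric input is that $\pi_X(\mathcal{C}_f)$ is a \emph{proper} analytic subvariety of $X$, hence has codimension $\geq 1$. This lets me invoke Theorem \ref{TheoremLocalPullbackForPositiveClosedCurrents} applied to the positive closed $(2,2)$ currents $[\Gamma_f]\wedge\pi_Y^*(\Omega^{\pm})$: away from $\mathcal{C}_f$ the restriction $\pi_Y|_{\Gamma_f}$ is a submersion, so $\pi_Y^*$ of a positive $dd^c$-closed current is well-defined there, and one can form $(\pi_X)_*(\theta[\Gamma_f]\wedge\pi_Y^*(T))$ for a cutoff $\theta$ supported off $\mathcal{C}_f$. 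The strategy is then to show that, for any good approximation scheme $K_n^{\pm}$ by $C^{s+2}$ forms, $f^*(K_n(T))=(\pi_X)_*(\pi_Y^*(K_n(T))\wedge[\Gamma_f])$ converges weakly, independently of the scheme, to the current defined by the local construction of Theorem \ref{TheoremLocalPullbackForPositiveClosedCurrents}. Concretely: pick a partition of unity $\theta+\theta'=1$ on $X\times Y$ with $\operatorname{supp}(\theta)\cap\Gamma_f$ compactly contained in $\Gamma_f-\mathcal{C}_f$ and $\operatorname{supp}(\theta')$ a small neighborhood of $\mathcal{C}_f$. On the $\theta$-part, $K_n^{\pm}(T)\to T$ weakly together with the boundedness of $DSH$-norms (property 1 of Definition \ref{DefinitionGoodApproximation}) gives convergence of $(\pi_X)_*(\theta[\Gamma_f]\wedge\pi_Y^*(K_n(T)))$ to $(\pi_X)_*(\theta[\Gamma_f]\wedge\pi_Y^*(T))$, since on the support of $\theta$ the wedge product is a continuous operation in the relevant topology (this is the content of the Dinh--Sibony argument behind Theorem \ref{TheoremLocalPullbackForPositiveClosedCurrents}). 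On the $\theta'$-part one uses a mass/Lelong-number estimate near $\pi_X(\mathcal{C}_f)$: because $\pi_X(\mathcal{C}_f)$ has positive codimension and the masses $\|K_n^{\pm}(T)\|$ are uniformly bounded, the contribution of the $\theta'$-part to the limit is carried by a set of the right dimension and does not depend on the scheme; shrinking the neighborhood of $\mathcal{C}_f$ makes its mass uniformly small, so it contributes nothing in the limit. Hence the limit exists, is scheme-independent, and $f^{\sharp}(T)$ is well-defined.

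Finally I would identify this limit with the ``usual'' pullback: Alessandrini--Bassanelli and Dinh--Sibony define $f^*T$ precisely as (essentially) $(\pi_X)_*(\pi_Y^*(T)\wedge[\Gamma_f])$ interpreted through the same off-$\mathcal{C}_f$ local construction, or as the limit of $f^*$ of smooth approximations with controlled masses; since any good approximation scheme furnishes such approximations, and since both definitions reduce to the same local object via Theorem \ref{TheoremLocalPullbackForPositiveClosedCurrents}, they coincide. Note that closedness of $f^{\sharp}(T)$ as asserted in Lemma \ref{LemmaGoodPropetiesOfPullbackOperator}(iv) is not automatic here because $T$ need only be $dd^c$-closed, not closed; but property 7 of Definition \ref{DefinitionGoodApproximation} ($dd^cK_n^{\pm}(T)=K_n^{\pm}(dd^cT)$) combined with what was just proved for the positive closed currents $\Omega^{\pm}=dd^cT_1^{\pm}$ shows $dd^c f^{\sharp}(T)=f^{\sharp}(dd^cT)$ is well-defined and positive-closed-decomposable, so $f^{\sharp}(T)$ is again $dd^c$-closed, as the usual definition demands. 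The main obstacle I anticipate is the estimate near $\mathcal{C}_f$: controlling the mass of $\pi_Y^*(K_n^{\pm}(T))\wedge[\Gamma_f]$ in a shrinking neighborhood of the indeterminacy locus uniformly in $n$, which is where the codimension hypothesis on $\pi_X(\mathcal{C}_f)$ and the uniform mass bounds from the good approximation scheme must be used together; this is precisely the point where a merely local approximation would fail to give scheme-independence, and it is the crux of why Definition \ref{DefinitionGoodApproximation} imposes the global boundedness conditions 1) and 2).
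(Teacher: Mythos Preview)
Your approach diverges from the paper's, and the gap you yourself flag at the end is not actually closed by anything you wrote. The paper does \emph{not} attempt a direct mass estimate near $\mathcal{C}_f$; instead it takes a desingularization $\widetilde{\Gamma_f}$ with projections $\pi:\widetilde{\Gamma_f}\to X$ and $g:\widetilde{\Gamma_f}\to Y$, invokes Theorem \ref{TheoremPullbackByDesingularization} to reduce to showing $g^{\sharp}(T)$ is well-defined, and then cites the proof of Theorem 5.5 in \cite{dinh-sibony2}, which handles positive $dd^c$-closed $(1,1)$ currents for surjective \emph{holomorphic} maps. That reduction is the whole point: once $g$ is holomorphic, there is no indeterminacy locus to worry about on the source side.

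The specific step in your argument that fails is the assertion that ``shrinking the neighborhood of $\mathcal{C}_f$ makes its mass uniformly small.'' You have uniform bounds on $\|K_n^{\pm}(T)\|$ on $Y$, but that does not control $\|f^*(K_n^{\pm}(T))\|$ near $\pi_X(\mathcal{C}_f)$: mass can concentrate on exceptional hypersurfaces, and since $\pi_X(\mathcal{C}_f)$ may have codimension exactly $1$, a difference of cluster points could be a nonzero $(1,1)$ current supported on a hypersurface---the Federer-type support theorem gives nothing here. In the positive \emph{closed} case (Theorem \ref{TheoremInterestingExample1}) this is circumvented by writing $T-\theta=dd^cR$ with $R$ of bidegree $(0,0)$ and applying the support theorem one degree lower; that trick is unavailable when $T$ is merely $dd^c$-closed, because there is no global potential. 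Your proposal does not supply a substitute, so as written it does not establish well-definedness.
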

\begin{proof}
Consider a desingulariztion $\widetilde{\Gamma _f}$ and $\pi :\widetilde{\Gamma _f}\rightarrow X$ and $g:\widetilde{\Gamma _f}\rightarrow Y$ as in
Theorem \ref{TheoremPullbackByDesingularization}. Then it suffices to show that $g ^{\sharp}(T)$ is well-defined.  This later follows from the proof of
Theorem 5.5 in \cite{dinh-sibony2}.
\end{proof}

For a map $f:\mathbb{P}^k\rightarrow \mathbb{P}^k$, Russakovskii and Shiffman \cite{russakovskii-shiffman} defined the pullback of a linear subspace $V$
of codimension $p$ in $\mathbb{P}^k$ for which $\pi _2^{-1}(V)\cap \Gamma _f$ has codimension $\geq p$ in $\Gamma _f$. It can be easily seen that this is
a special case of Corollary \ref{CorollaryPullBackVariety}. In the same paper, we also find a definition for pullback of a measure having no mass on $\pi
_Y(\mathcal{C}_f)$. Our definition coincides with theirs

\begin{theorem} Let $X$ and $Y$ be two compact K\"ahler manifolds. Let $f:X\rightarrow Y$ be a dominant meromorphic map. Let $T$ be a positive measure having no
mass on $\pi _Y(\mathcal{C}_f)$. Then $f^{\sharp}(T)$ is well-defined, and coincides with the usual definition. Moreover, if $T$ has no mass on proper analytic
subvarieties of $Y$, then $f^{\sharp}(T)$ has no mass on proper analytic subvarieties
of $X$.

\label{TheoremPullbackOfMeasures}\end{theorem}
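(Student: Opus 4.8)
The plan is to compute $f^{\sharp}(T)$ by testing against strongly positive smooth forms. Write $n=\dim X$, $k=\dim Y$. Since for a smooth $(n-k,n-k)$-form $\alpha$ on $X$ the quantity $\lim_n\int_YT\wedge K_n(f_*\alpha)$ (when it exists) depends linearly on $\alpha$, and every such form is a complex-linear combination of strongly positive smooth forms of the type $\alpha+M\omega_X^{n-k}$ with $M$ large ($\omega_X^{n-k}$ being interior to the strongly positive cone at each point, and $X$ compact), I would first reduce to the case $\alpha$ strongly positive. Then $\pi_X^*\alpha\wedge[\Gamma_f]\ge0$ and $f_*\alpha=(\pi_Y)_*(\pi_X^*\alpha\wedge[\Gamma_f])$ is a positive $DSH$ $(0,0)$-current on $Y$.

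The second step is the regularity fact on which everything hinges: $f_*\alpha$ is (represented by) a bounded function which is continuous on $U:=Y-\pi_Y(\mathcal{C}_f)$. Continuity on $U$ is classical, since there $\pi_Y|_{\Gamma_f}$ has equidimensional compact fibres and $f_*\alpha(y)=\langle[\pi_Y^{-1}(y)\cap\Gamma_f],\pi_X^*\alpha\rangle$. Boundedness follows because $\pi_Y(\mathcal{C}_f)$ is analytic (Remmert), so $U$ is connected, hence the cycles $\pi_Y^{-1}(y)\cap\Gamma_f$ ($y\in U$) have a common finite volume $\lambda$, giving $|f_*\alpha|\le\lambda\|\alpha\|_{C^0}$ on $U$; and there is no singular part carried by $\pi_Y(\mathcal{C}_f)$, since testing the positive measure $f_*\alpha$ against the smooth form $\omega_Y^k$ and using fibre integration together with the fact that $\pi_Y^{-1}(\pi_Y(\mathcal{C}_f))\cap\Gamma_f$ is a proper subvariety of $\Gamma_f$ (over which a smooth $(n,n)$-form integrates to $0$) gives $\langle f_*\alpha,\omega_Y^k\rangle=\int_U(f_*\alpha)\,\omega_Y^k$. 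Thus $0\le f_*\alpha\le C\mathbf{1}$ on $Y$ once we fix an integer $C\ge\lambda\|\alpha\|_{C^0}$. The same fibre integration shows that the candidate pull-back $S:=\lim_mS_m$ is a well-defined positive current of finite mass $\le\lambda\|T\|$, where $S_m$ is the honest pull-back of $T_m:=T|_{Y-W_m}$ and $(W_m)$ is a decreasing sequence of open neighbourhoods of $\pi_Y(\mathcal{C}_f)$ with $T(W_m)\to0$ (possible since $T$ is a finite measure with $T(\pi_Y(\mathcal{C}_f))=0$); moreover $\langle S_m,\alpha\rangle=\int_{Y-W_m}(f_*\alpha)\,dT$, and $S$ is precisely the classical pull-back of Russakovskii and Shiffman \cite{russakovskii-shiffman}.

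The third step carries out the limit for an arbitrary good approximation scheme $K_n^{\pm}$ by $C^{2}$ forms ($s=0$, as $T$ has order $0$; such a scheme exists by the regularization theorem of Dinh and Sibony \cite{dinh-sibony4}). For each $m$ split $\int_YT\wedge K_n(f_*\alpha)=\int_{Y-W_m}K_n(f_*\alpha)\,dT+\int_{W_m}K_n(f_*\alpha)\,dT$. On the compact set $Y-W_m\subset U$ the continuity property (property 4 of Definition~\ref{DefinitionGoodApproximation}) gives $K_n(f_*\alpha)\to f_*\alpha$ uniformly, so the first term tends to $\int_{Y-W_m}(f_*\alpha)\,dT=\langle S_m,\alpha\rangle$ as $n\to\infty$. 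For the tail term, note that $C\mathbf{1}-f_*\alpha\ge0$ is again a positive $DSH$ current, so by additivity (property 5) and positivity (property 2) one has $K_n^{\pm}(C\mathbf{1})=K_n^{\pm}(f_*\alpha)+K_n^{\pm}(C\mathbf{1}-f_*\alpha)$ with both summands $\ge0$, hence $0\le K_n^{\pm}(f_*\alpha)\le K_n^{\pm}(C\mathbf{1})=C\,K_n^{\pm}(\mathbf{1})$; and since $\mathbf{1}$ is globally continuous, $K_n^{\pm}(\mathbf{1})$ converges uniformly on $Y$, so $\sup_n\|K_n^{\pm}(f_*\alpha)\|_{L^{\infty}(Y)}=:C_2<\infty$. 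Therefore $|\int_{W_m}K_n(f_*\alpha)\,dT|\le 2C_2\,T(W_m)$ uniformly in $n$. Letting $n\to\infty$ and then $m\to\infty$ (using $\langle S_m,\alpha\rangle\to\langle S,\alpha\rangle$ by monotone convergence and $T(W_m)\to0$) yields $\lim_n\int_YT\wedge K_n(f_*\alpha)=\langle S,\alpha\rangle$, independently of the scheme. Hence $f^{\sharp}(T)=S$ is well-defined and coincides with the usual definition.

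I expect the main obstacle to be exactly the tail term near $\pi_Y(\mathcal{C}_f)$: a priori the regularizations $K_n^{\pm}(f_*\alpha)$ could concentrate mass near $\pi_Y(\mathcal{C}_f)$ in a way that the measure $T$ detects. Its resolution relies on the two facts isolated above — that $f_*\alpha$ is genuinely a bounded function (no singular part on $\pi_Y(\mathcal{C}_f)$), which, via additivity and positivity of the scheme, upgrades to the uniform bound $K_n^{\pm}(f_*\alpha)\le C\,K_n^{\pm}(\mathbf{1})$, after which the hypothesis $T(\pi_Y(\mathcal{C}_f))=0$ kills the tail. For the last assertion: if $T$ charges no proper analytic subvariety of $Y$, I would check the same for each $S_m$ using a desingularization $\pi:\widetilde{\Gamma_f}\to X$, $g:\widetilde{\Gamma_f}\to Y$. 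A proper subvariety $W\subset X$ has $\pi^{-1}(W)$ a proper subvariety of $\widetilde{\Gamma_f}$, and either $g(\pi^{-1}(W))$ is a proper subvariety of $Y$ — on which $T_m$, hence the pull-back, has no mass — or it equals $Y$, in which case $\pi^{-1}(W)$ meets the generic fibre $g^{-1}(y)$ in a proper subvariety of it, so the contribution vanishes for $y$ outside a proper subvariety $Z\subset Y$ with $T_m(Z)=0$; thus $S_m(W)=0$ and $S(W)=\lim_mS_m(W)=0$.
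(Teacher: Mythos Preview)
Your argument is correct and follows essentially the same line as the paper's: both proofs hinge on the fact that, for a strongly positive test form $\alpha$, the push-forward $f_*\alpha$ (or the analogous $(\pi_Y)_*(\theta_j\alpha\wedge[\Gamma_f])$ in the paper) is a \emph{bounded} function, which yields a uniform $L^\infty$ bound on its regularizations $K_n^{\pm}(f_*\alpha)$; the hypothesis $T(\pi_Y(\mathcal{C}_f))=0$ then kills the contribution near $\pi_Y(\mathcal{C}_f)$.

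The executions differ in a minor but noteworthy way. The paper works via Definition~\ref{DefinitionPullbackCurrentsByMeromorphicMaps} (regularizing $T$), invokes the commutativity property~6 to pass to $K_n$ of the push-forward, and then uses the \emph{support condition} (property~8) together with Lebesgue dominated convergence to show the tail vanishes. You instead work directly via Definition~\ref{DefinitionPullbackDdcOfOrderSCurrents}, split over shrinking neighbourhoods $W_m$ of $\pi_Y(\mathcal{C}_f)$, and use the \emph{continuity} property~4 on $Y\setminus W_m$ in place of property~8; this has the small advantage of producing the candidate limit $S=\lim S_m$ explicitly and identifying it with the Russakovskii--Shiffman pull-back along the way. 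For the final assertion on analytic subvarieties, the paper simply appeals to the identification $f^{\sharp}(T)=f^o(T)$ and the results of \cite{dinh-sibony2}, whereas you give a self-contained fibre-dimension argument via a desingularization; your case analysis on $g(\pi^{-1}(W))$ is sound.
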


\subsection{Applications}\label{SubsectionApplications}

We now discuss the problem of finding an invariant current of a dominant meromorphic self-map $f$. Let $f:X\rightarrow X$ be a dominant meromorphic
selfmap of a compact K\"ahler manifold $X$ of dimension $k$. Define by $r_p(f)$ the spectral radius of $f^*:H^{p,p}(X)\rightarrow H^{p,p}(X)$. Then the
$p$-th dynamical degree of $f$ is defined as follows:
\begin{eqnarray*}
\delta _p(f)=\lim _{n\rightarrow\infty}(r_p(f^n))^{1/n},
\end{eqnarray*}
where $f^n=f\circ f\circ \ldots \circ f$ is the $n$-th iteration of $f$. When $p=dim(X)$ then $\delta _p(f)$ is the topological degree of $f$.

The map $f$ is called $p$-algebraic stable (see, for example \cite{dinh-sibony4}) if $(f^*)^n=(f^n)^*$ as linear maps on $H^{p,p}(X)$ for all
$n=1,2,\ldots $. When this condition is satisfied, it follows that $\delta _p(f)=r_p(f)$, thus helps in determining the $p$-th dynamical degree of $f$.

There is also the related condition of $p$-analytic stable (see \cite{dinh-sibony4}) which requires that

1) $(f^{n})^{\sharp}(T)$ is well-defined for any positive closed $(p,p)$ current $T$ and any $n\geq 1$.

2) Moreover, $(f^n)^{\sharp}(T)=(f^{\sharp})^n(T)$ for any positive closed $(p,p)$ current $T$ and any $n\geq 2$.

Since $H^{p,p}(X)$ is generated by classes of positive closed smooth $(p,p)$ forms, $p$-analytic stability implies $p$-algebraic stability. In fact, if $\pi _1(\mathcal{C}_f)$ has codimension $\geq p$, then $f$ is $p$-analytic stable iff it is $p$-algebraic stable and satisfies condition 1) above so that $(f^{\sharp})^n(\alpha )$ is positive closed for
any positive closed smooth $(p,p)$ form and for any $n\geq 1$. Hence $1$-algebraic stability is the same as $1$-analytic stability.

For any map $f$ then $f$ is $k$-algebraic stable where $k=$dimension of $X$. If $f$ is holomorphic then it is $p$-algebraic stable for any $p$. We have
the following result

\begin{lemma}
Let $X$ be a compact Kahler manifold with a Kahler form $\omega _X$ and $f:X\rightarrow X$ be a dominant meromorphic map. Assume that $\pi
_1(\mathcal{C}_f)$ has codimension $\geq p$ and $f$ is $p$-analytic stable. Let $0\not= \theta$ be an eigenvector with respect to the eigenvalue $\lambda
=r_p(f)$ the spectral radius of the linear map $f^*:H^{p,p}(X)\rightarrow H^{p,p}(X)$. Assume moreover that $||(f^n)^*(\omega _X^p)||\thicksim \lambda
^n$ as $n\rightarrow\infty$. Then there is a closed $(p,p)$ current $T$ which is a difference of two positive closed $(p,p)$ currents satisfying
$\{T\}=\theta$ and $f^{\sharp}(T)=\lambda T$. \label{LemmaInvariantCurrentForKahlerClass}\end{lemma}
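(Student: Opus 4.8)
The plan is to build $T$ by a standard averaging (Cesàro-type) construction applied to pullbacks of a fixed smooth representative of $\theta$, and then to show, using the hypotheses, that the resulting limit current has cohomology class $\theta$ and is an eigencurrent for $f^{\sharp}$. First I would fix a smooth closed $(p,p)$ form $\alpha$ with $\{\alpha\}=\theta$. Since $f$ is $p$-algebraic stable (which follows from $p$-analytic stability together with $\operatorname{codim}\pi_1(\mathcal{C}_f)\geq p$), we have $(f^n)^*=(f^*)^n$ on $H^{p,p}(X)$, so $\{(f^n)^*\alpha\}=\lambda^n\theta + (\text{terms from other eigenvalues/Jordan blocks})$. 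Here I would want to normalize: set $S_n:=\lambda^{-n}(f^n)^{\sharp}(\alpha)$, which by $p$-analytic stability equals $\lambda^{-n}(f^{\sharp})^n(\alpha)$ and is a difference of two positive closed currents whose masses are controlled. The mass control is exactly where the hypothesis $\|(f^n)^*(\omega_X^p)\|\thicksim\lambda^n$ enters: dominating $\alpha$ below and above by multiples of $\omega_X^p$ (possible since $\alpha$ is smooth and $\omega_X$ is Kähler, after adding a large multiple of $\omega_X^p$ and splitting), the positive and negative parts of $S_n$ have uniformly bounded mass.

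Next I would pass to a weak limit. By compactness of bounded sets of positive closed currents, a subsequence of the positive parts and of the negative parts of $S_n$ converge; alternatively, to get an honest limit rather than a subsequential one, I would instead average: put $T_N:=\frac{1}{N}\sum_{n=1}^{N}S_n$ and extract a weak limit $T$ of a subsequence $T_{N_k}$. Then $T$ is a difference of two positive closed $(p,p)$ currents with bounded mass. The cohomology class is computed from $\{S_n\}=\lambda^{-n}(f^*)^n\theta$; since $\lambda=r_p(f)$ is the spectral radius and $\theta$ is a genuine eigenvector for $\lambda$ (not merely in a Jordan block above it — if there were a nontrivial Jordan block the Cesàro average would still converge, to the projection onto the $\lambda$-eigenspace, but one must check the limit is nonzero and a multiple of $\theta$), the Cesàro averages $\frac{1}{N}\sum_{n=1}^N\lambda^{-n}(f^*)^n\theta$ converge in $H^{p,p}(X)$ to $\theta$. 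Hence $\{T\}=\theta$; in particular $T\neq 0$.

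Finally I would verify $f^{\sharp}(T)=\lambda T$. Since $\operatorname{codim}\pi_1(\mathcal{C}_f)\geq p$, Theorem \ref{TheoremInterestingExample1} guarantees that $f^{\sharp}$ is well-defined on every positive closed $(p,p)$ current and, crucially, is weakly continuous along weakly convergent sequences of such currents. Applying $f^{\sharp}$ to $T_N$ and using linearity (Lemma \ref{LemmaGoodPropetiesOfPullbackOperator}(ii)), $f^{\sharp}(T_N)=\frac{1}{N}\sum_{n=1}^N\lambda^{-n}f^{\sharp}((f^n)^{\sharp}\alpha) = \frac{1}{N}\sum_{n=1}^N\lambda^{-n}(f^{n+1})^{\sharp}\alpha = \lambda\cdot\frac{1}{N}\sum_{n=2}^{N+1}S_n$, where the middle equality uses $p$-analytic stability (condition 2). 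This last expression differs from $\lambda T_N$ by the two boundary terms $\frac{\lambda}{N}(S_{N+1}-S_1)$, whose masses go to $0$ by the uniform mass bound. Passing to the limit along $N_k$ and using weak continuity of $f^{\sharp}$ on the left, $f^{\sharp}(T)=\lambda T$.

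The main obstacle I expect is controlling the masses uniformly, i.e. deducing from $\|(f^n)^*(\omega_X^p)\|\thicksim\lambda^n$ that the \emph{positive and negative parts} of $\lambda^{-n}(f^{\sharp})^n(\alpha)$ — not just the current itself — have bounded mass. The clean way is to choose $C>0$ with $C\omega_X^p\pm\alpha$ both positive (as smooth forms, pointwise strongly positive after enlarging $C$); then $(f^{\sharp})^n(C\omega_X^p\pm\alpha)$ are positive closed by $p$-analytic stability, their masses are computed cohomologically as $\langle (f^n)^*(C\omega_X^p\pm\alpha),\omega_X^{k-p}\rangle\lesssim \lambda^n$, and writing $(f^{\sharp})^n\alpha=\tfrac12[(f^{\sharp})^n(C\omega_X^p+\alpha)-(f^{\sharp})^n(C\omega_X^p-\alpha)]$ exhibits it as a difference of positive closed currents of mass $O(\lambda^n)$. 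A secondary subtlety is the possibility that $\lambda$ sits atop a nontrivial Jordan block, in which case $\lambda^{-n}(f^*)^n\theta$ need not converge but its Cesàro average does, and one must confirm the limiting class equals $\theta$; since $\theta$ itself is assumed to be a genuine eigenvector, $\theta$ lies in the kernel of $(f^*-\lambda)$ and is fixed by the Cesàro projection, so this causes no real trouble.
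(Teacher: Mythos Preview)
Your proposal is correct and follows essentially the same Ces\`aro averaging strategy as the paper: write a smooth representative of $\theta$ as a difference of positive closed smooth forms (your $C\omega_X^p\pm\alpha$ is exactly the paper's $\alpha^{\pm}$), use $p$-analytic stability and the growth hypothesis to bound the masses of $\lambda^{-n}(f^{\sharp})^n(\alpha^{\pm})$, average, extract a subsequential limit, and use the weak continuity of $f^{\sharp}$ from Theorem~\ref{TheoremInterestingExample1} together with the telescoping boundary terms to get $f^{\sharp}(T)=\lambda T$. One minor simplification you can make: since $\theta$ is already an eigenvector, $\{S_n\}=\lambda^{-n}(f^*)^n\theta=\theta$ \emph{exactly} for every $n$, so $\{T_N\}=\theta$ for all $N$ and the Jordan block discussion is unnecessary.
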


Since $f$ is $p$-analytic stable, the condition on $||(f^n)^*(\omega _X^p)||$ can be easily checked by look at the Jordan form for $f^*$ (see e.g.
\cite{guedj}). Variants of this condition are also available. Lemma \ref{LemmaInvariantCurrentForKahlerClass} generalizes the results for the standard
case $p=1$ and for the case $X=\mathbb{P}^k$ in Dinh and Sibony \cite{dinh-sibony4}. We suspect that the pseudo-automorphism in \cite{bedford-kim} are
$2$-analytic stable, the latter may probably be checked using the method of the proof of Lemma \ref{LemmaJXSquare}. If so, Lemma
\ref{LemmaInvariantCurrentForKahlerClass} can be applied to these maps to produce invariant closed $(2,2)$ currents. However, these invariant currents
may not be unique, since for the maps in \cite{bedford-kim} the first and second dynamical degrees are the same. The map $J_X$ in Section 4 has invariant
$(2,2)$ current $\widetilde{\Sigma _{0,1}}-\widetilde{\Sigma _{2,3}}$ which is not positive. The relation between $p$-algebraic and $p$-analytic
stabilities to the problem of finding invariant currents will be discussed more in Sections $5$ and $6$.

Let us continue with an application concerning invariant positive closed currents whose supports are contained in pluripolar sets.
\begin{corollary}
Let $f_1:\mathbb{P}^{k_1}\rightarrow \mathbb{P}^{k_1}$ and $f_2:\mathbb{P}^{k_2}\rightarrow \mathbb{P}^{k_2}$ be dominant rational maps not $1$-algebraic
stable, of degrees $d_1$ and $d_2$ respectively. Then there is a nonzero positive closed $(2,2)$ current $T$ on $\mathbb{P}^{k_1}\times \mathbb{P}^{k_2}$
with the following properties:

1) $f^{\sharp}(T)$ is well-defined and moreover $f^{\sharp}(T)=d_1d_2T$, here $f=f_1\times f_2$.

2) The support of $T$ is pluripolar.
 \label{CorollaryInvariantCurrentsForNonAlgebraicStability}\end{corollary}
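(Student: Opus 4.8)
The plan is to reduce Corollary \ref{CorollaryInvariantCurrentsForNonAlgebraicStability} to the machinery already developed for products of maps and to the failure of $1$-algebraic stability for each factor. Since $f_i$ is not $1$-algebraic stable, there is a smallest iterate $m_i$ and an exceptional hypersurface whose image under $f_i^{m_i}$ has codimension $\geq 2$; equivalently, there is an irreducible hypersurface $H_i\subset \mathbb{P}^{k_i}$ (the ``destroyed'' divisor, or a component of the indeterminacy-image) together with the basic identity $(f_i^{\sharp})[H_i] = d_i[H_i] + (\text{extra positive closed }(1,1)\text{ pieces})$ is \emph{not} what we want; rather, the standard construction (as in Dinh--Sibony, and in the $\mathbb{P}^k$ theory) produces a nonzero positive closed $(1,1)$ current $S_i$ on $\mathbb{P}^{k_i}$, supported on a pluripolar set (in fact on a countable union of hypersurfaces collapsed by forward iteration), with $f_i^{\sharp}(S_i)=d_i S_i$. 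The first step is therefore to recall/construct, for each $i$, such an invariant $(1,1)$ current $S_i$ with pluripolar support; this is exactly where non-$1$-algebraic-stability is used, since for a $1$-algebraically stable map no such collapsing current exists.

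Next I would form $T := \pi_1^*(S_1)\wedge \pi_2^*(S_2)$ on $\mathbb{P}^{k_1}\times\mathbb{P}^{k_2}$, where $\pi_1,\pi_2$ are the projections. Because $S_1$ and $S_2$ live on different factors, the wedge product $\pi_1^*(S_1)\wedge \pi_2^*(S_2)$ is a well-defined positive closed $(2,2)$ current (this is the standard fact that pulled-back currents from transverse factors intersect geometrically, so no regularization subtlety arises). Its support is $\mathrm{supp}(S_1)\times\mathrm{supp}(S_2)$, which is contained in $(\text{pluripolar})\times\mathbb{P}^{k_2}\;\cup\;\mathbb{P}^{k_1}\times(\text{pluripolar})$, hence pluripolar in the product; this gives property 2). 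So the only real work is property 1): that $f=f_1\times f_2$ can pull $T$ back and that $f^{\sharp}(T)=d_1 d_2 T$.

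For property 1) the strategy is: first check that $f^{\sharp}(T)$ is well-defined by exhibiting $T$ as covered by one of the earlier theorems. The indeterminacy set of $f_1\times f_2$ is $(\text{Ind}(f_1)\times\mathbb{P}^{k_2})\cup(\mathbb{P}^{k_1}\times\text{Ind}(f_2))$, and $\pi_X(\mathcal{C}_f)$ has codimension $\geq 1$; more importantly, the current $T$ is continuous (indeed zero is \emph{not} true, but $T$ restricted to the complement of a suitable analytic set is well-behaved) away from $A:=(\mathrm{supp}(S_1)\times\mathbb{P}^{k_2})\cup(\mathbb{P}^{k_1}\times\mathrm{supp}(S_2))$, and one verifies the hypothesis of Theorem \ref{TheoremInterestingExample} (or the Siu-decomposition Theorem \ref{TheoremUseSiuDecomposition}, applied to the components $[V_j]$ of $T$, since $T$ itself decomposes along the hypersurfaces in $\mathrm{supp}(S_i)$) that $f^{-1}(A)\cap \pi_X(\mathcal{C}_f)$ is contained in a subvariety of codimension $\geq 2$ — this follows because each $S_i$ is supported where $f_i$ is ``locally like the pullback of a point of lower-dimensional image,'' so the relevant intersections drop dimension as required. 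Once well-definedness is in hand, the identity $f^{\sharp}(T)=d_1 d_2 T$ follows by functoriality of $\sharp$ under products together with the continuity statement in Theorem \ref{TheoremInterestingExample}: approximate $S_i$ by positive closed smooth forms $S_{i,n}^{\pm}$ with bounded mass, converging locally uniformly off $\mathrm{supp}(S_i)$; then $\pi_1^*(S_{1,n}^{\pm})\wedge\pi_2^*(S_{2,m}^{\pm})$ are smooth approximations of $T$ to which $f^*=f_1^*\otimes f_2^*$ applies classically, giving $f^*(\pi_1^*S_{1,n}^{\pm}\wedge\pi_2^*S_{2,m}^{\pm}) = \pi_1^*(f_1^*S_{1,n}^{\pm})\wedge\pi_2^*(f_2^*S_{2,m}^{\pm})$, and passing to the limit using $f_i^{\sharp}(S_i)=d_i S_i$ yields $d_1 d_2\, T$.

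The main obstacle I expect is the verification that $T$ (equivalently, each irreducible hypersurface-component appearing in $\mathrm{supp}(S_i)$ and hence in the Siu decomposition of $T$) satisfies the dimension hypothesis $f^{-1}(V)\ \mathrm{codim}\ \geq 2$ of Theorem \ref{TheoremInterestingExample}/Corollary \ref{CorollaryPullBackVariety} — i.e. that pulling back by $f_1\times f_2$ does not blow these collapsed hypersurfaces down to something too big on the source. This requires understanding precisely which hypersurfaces carry mass of $S_i$: they are the forward images $f_i^{j}(E)$ of exceptional components $E$, and one must check that $f^{-1}$ of such a hypersurface in the product does not contain a component of the critical set $\pi_X(\mathcal{C}_{f_1\times f_2})$ of codimension $1$. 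A secondary subtlety is making sure the product current $\pi_1^*S_1\wedge\pi_2^*S_2$ is genuinely nonzero and that the ``$\times\mathbb{P}^{k_i}$'' directions do not destroy the pluripolarity claim — but that is routine once one notes a countable union of products of the form $(\text{hypersurface})\times\mathbb{P}^{k_2}$ is still pluripolar.
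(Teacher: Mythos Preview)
Your overall strategy matches the paper's: take the Sibony/Buff Green $(1,1)$ currents $T_i$ for the non-$1$-stable $f_i$, which are countable sums $T_i=\sum_j\lambda_{j,i}[V_{j,i}]$ supported on hypersurfaces, set $T=T_1\times T_2$, and verify invariance by approximating each factor. The pluripolarity and nontriviality of $T$ are handled exactly as you say.

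Two remarks on execution. First, the ``main obstacle'' you anticipate dissolves immediately from the product structure: for irreducible hypersurfaces $V_{j,1}\subset\mathbb{P}^{k_1}$, $V_{j',2}\subset\mathbb{P}^{k_2}$ one has $f^{-1}(V_{j,1}\times V_{j',2})=f_1^{-1}(V_{j,1})\times f_2^{-1}(V_{j',2})$, and each factor has codimension $\geq 1$ since $f_i$ is dominant, so the product has codimension $\geq 2$. No analysis of which hypersurfaces carry mass, or of $\pi_X(\mathcal{C}_{f_1\times f_2})$, is needed. Second, rather than applying Theorem~\ref{TheoremInterestingExample} directly to $T$ (where your set $A$ is closed but not analytic, making the hypothesis $f^{-1}(A)\cap\pi_X(\mathcal{C}_f)\subset V$ with $V$ analytic of codimension $\geq 2$ awkward to verify), the paper works with the finite truncations $S_{N,i}=\sum_{j\leq N}\lambda_{j,i}[V_{j,i}]$: Corollary~\ref{CorollaryPullBackVariety} gives $f^\sharp(S_{N,1}\times S_{N,2})$ well-defined, and since $T-S_{N,1}\times S_{N,2}$ is positive closed with mass tending to $0$, Theorem~\ref{TheoremContinuityPropertiesOfPullbackOperator} (the sandwiching continuity) yields $f^\sharp(T)$ well-defined as the limit. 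This is essentially your Siu-decomposition suggestion made precise. For the identity $f^\sharp(S_{N,1}\times S_{N,2})=f_1^*(S_{N,1})\times f_2^*(S_{N,2})$, the paper uses the $\mathbb{P}^k$ regularizations of $S_{N,i}$ whose supports converge to $\mathrm{supp}(S_{N,i})$, so that Theorem~\ref{TheoremInterestingExample} applies to the product approximations; your smooth-approximation sketch is the same idea, just stated at the level of the full $S_i$ rather than the truncations.
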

The existence of Green currents $T_1$ and $T_2$ for $f_1$ and $f_2$ were proved by Sibony \cite{sibony} (see also \cite{buff}). The current $T$ is in
fact the product $T_1\times T_2$. Its support is contained in a countable union of analytic varieties of codimension $2$ in $\mathbb{P}^{k_1}\times
\mathbb{P}^{k_2}$. The subtlety in proving the Conclusion 1) of Corollary \ref{CorollaryInvariantCurrentsForNonAlgebraicStability} lies in the fact that
for general choices of $f_1$ and $f_2$ it is not clear that we can pullback every positive closed $(2,2)$ currents, and even if we can do so, we may not
have the continuity on pullback like in the case of $(1,1)$ currents.

\begin{corollary}
Let $X$ be a compact K\"ahler manifold of dimension $k$, and let $f:X\rightarrow X$ be a dominant meromorphic map. Assume that $f$ has large topological
degree, i.e. $\delta _k(f)>\delta _{k-1}(f)$. Then $f$ has an invariant positive measure $\mu$, i.e. $f^*(\mu )=\delta _k(f)\mu$.
\label{TheoremInvariantMeasureForLargeTopologicalDegree}\end{corollary}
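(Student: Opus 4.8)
The plan is to construct $\mu$ as a normalized limit of pull-backs of a generic point mass, using the machinery developed in the paper together with the large-topological-degree hypothesis. First I would recall that for a dominant meromorphic self-map $f:X\to X$ the top dynamical degree $\delta_k(f)$ equals the topological degree $d_t$, and that by Theorem \ref{TheoremPullbackOfMeasures} the pull-back $f^{\sharp}(\nu)$ is well-defined for any positive measure $\nu$ having no mass on $\pi_Y(\mathcal{C}_f)$, and then also for its iterates. So one can form the sequence $\mu_n = d_t^{-n}(f^n)^{\sharp}(\delta_a)$ where $a\in X$ is a generic point (chosen outside the countably many ``bad'' analytic sets, so that every preimage stays away from the indeterminacy and critical loci, which is where one uses that $X$ is compact Kähler and that these bad sets are proper analytic subvarieties). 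Each $\mu_n$ is a positive measure of total mass $1$ (the mass is $d_t^{-n}\cdot d_t^{n}=1$, since the topological degree counts preimages), and by weak-$*$ compactness of probability measures on the compact space $X$ some subsequence converges to a probability measure $\mu$.

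The heart of the argument is to show that the limit is independent of the generic base point and of the subsequence, and that it satisfies $f^{\sharp}(\mu)=d_t\,\mu$. For invariance, once one knows $(f^n)^{\sharp}=(f^{\sharp})^n$ on these measures (which follows from the functoriality of $f^{\sharp}$ established in Lemma \ref{LemmaGoodPropetiesOfPullbackOperator} and Theorem \ref{TheoremPullbackByDesingularization} in the measure case, plus Theorem \ref{TheoremPullbackOfMeasures}), one has $d_t^{-1}f^{\sharp}(\mu_n)=\mu_{n+1}$, and passing to the limit — using the continuity statement for pull-back of measures with no mass on $\pi_Y(\mathcal C_f)$, which is again part of Theorem \ref{TheoremPullbackOfMeasures} — gives $d_t^{-1}f^{\sharp}(\mu)=\mu$, i.e. $f^*(\mu)=\delta_k(f)\mu$. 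The uniqueness/convergence part is where the hypothesis $\delta_k(f)>\delta_{k-1}(f)$ is really needed: the difference of pull-backs of two generic point masses is $d_t^{-n}(f^n)^{\sharp}$ of a signed measure of total mass zero supported on two points, and one estimates its action on a smooth test form $\varphi$ by writing $\delta_a-\delta_b = dd^c(\text{something}) + \text{exact}$, or more concretely by comparing with $d_t^{-n}(f^n)^{\sharp}(\omega_X^k)$-type quantities and invoking that $\|(f^n)^*(\omega_X^{k-1})\|$ grows like $\delta_{k-1}(f)^n = o(\delta_k(f)^n)$; the gap forces the error to decay geometrically.

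Concretely, I would estimate $\langle \mu_n, \varphi\rangle - \langle \mu_n, \psi\rangle$ for two normalized base measures by pulling back a smooth $(k-1,k-1)$-potential-type form: if $\varphi_1,\varphi_2$ are smooth probability measures (not point masses — first do the argument for smooth measures, then pass to point masses by the continuity in Theorem \ref{TheoremInterestingExample}) with $\varphi_1-\varphi_2 = dd^c S$ for a smooth $(k-1,k-1)$ form $S$, then $d_t^{-n}(f^n)^{\sharp}(\varphi_1-\varphi_2)= d_t^{-n}dd^c (f^n)^{\sharp}(S)$, whose mass is controlled by $d_t^{-n}\|(f^n)^*(S)\| \lesssim d_t^{-n}\|(f^n)^*(\omega_X^{k-1})\|\|S\|_{C^0}\sim (\delta_{k-1}(f)/\delta_k(f))^n \to 0$. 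This shows the sequence $\mu_n$ is Cauchy (hence convergent, not just subsequentially) and that the limit $\mu$ does not depend on the smooth base measure; by the continuity of pull-back of measures with no mass on $\pi_Y(\mathcal C_f)$ the limit also agrees with the limit starting from a generic $\delta_a$, pinning down $\mu$ uniquely.

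\textbf{Main obstacle.} The delicate point is not the abstract functoriality but the mass/decay estimate: I need $\|(f^n)^{\sharp}(S)\|$ — where $S$ is a fixed smooth $(k-1,k-1)$ form — to grow no faster than $\delta_{k-1}(f)^n$ up to a polynomial or subexponential factor, and I need this for the actual pull-back operator $f^{\sharp}$, not merely in cohomology. This requires knowing that $f^{\sharp}(S)$ is again well-defined and controllable for such $S$ (it is a difference of positive closed currents, so I would decompose $S$ via its positive and negative parts and apply the bidegree-$(k-1,k-1)$ pull-back results — here $p=k-1$ and $\pi_X(\mathcal C_f)$ automatically has codimension $\ge 1$, but I may need codimension $\ge k-1$, which is not automatic; so I should instead route everything through the desingularization $\widetilde{\Gamma_f}$ of Theorem \ref{TheoremPullbackByDesingularization} and the holomorphic map $g$, for which pull-back of smooth forms is classical and norm estimates reduce to the standard comparison $\|g^*(\omega^{k-1})\|_{\widetilde{\Gamma_f}}$). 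Reconciling the meromorphic pull-back with these holomorphic estimates on $\widetilde{\Gamma_f}$, and making sure the $o(1)$ in the Cauchy estimate is uniform over test forms of bounded $C^2$ norm, is the technical crux; everything else is a routine assembly of the results already proved.
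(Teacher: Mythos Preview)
Your approach is essentially Guedj's original construction: form $\mu_n=d_t^{-n}(f^n)^*(\theta)$, prove the sequence is Cauchy via the gap $\delta_k>\delta_{k-1}$, and extract invariance in the limit. The paper takes a shorter, more abstract route: it observes that $(f^n)^{\sharp}(\theta)=(f^{\sharp})^n(\theta)$ for every smooth top-degree form $\theta$, so $f$ is $dd^c$-$k$ stable; then it picks (following Guedj) a smooth probability measure $\theta$ with $f^*(\theta)$ again smooth, writes $f^*(\theta)-\delta_k(f)\theta=dd^c(\varphi)$ with $\varphi$ smooth, and applies Theorem~\ref{TheoremInvariantCurrents} (implication $2)\Rightarrow 1)$) directly. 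Your Cauchy estimate is exactly the computation hidden inside Lemma~\ref{LemmaSolvingTheDdcForPullback} with $p=k$, so the two arguments unwind to the same inequality; the paper just packages it once and for all. Your worry about needing $\mathrm{codim}\,\pi_X(\mathcal C_f)\geq k-1$ is unfounded: the form $S$ is smooth, so $(f^n)^*(S)$ is always defined and the bound $\|(f^n)^*(S)\|\lesssim\|(f^n)^*(\omega_X^{k-1})\|$ follows from Lemma~\ref{LemmaBoundContinuousForms}.

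There is, however, one real gap in your invariance step. You write that $f^{\sharp}(\mu)=d_t\mu$ follows by ``the continuity statement for pull-back of measures with no mass on $\pi_Y(\mathcal C_f)$, which is part of Theorem~\ref{TheoremPullbackOfMeasures}''. That theorem contains no continuity assertion, and more importantly you do not know that the limit $\mu$ has no mass on $\pi_Y(\mathcal C_f)$ --- indeed, the paper explicitly remarks that its argument is arranged precisely to \emph{avoid} proving this. The correct tool is Theorem~\ref{TheoremContinuityPropertiesOfPullbackOperator}: writing $R_n=\sum_{j\geq n}d_t^{-j-1}(f^j)^*(\varphi)$ you have $\mu-\mu_n=dd^c R_n$ with $-S_n\leq R_n\leq S_n$ for $S_n=A\sum_{j\geq n}d_t^{-j-1}(f^j)^*(\omega_X^{k-1})$, and $\|S_n\|\to 0$ by the dynamical-degree estimate. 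Part~2) of that theorem then gives $f^{\sharp}(\mu)$ well-defined and equal to $\lim f^{\sharp}(\mu_n)=d_t\mu$. Once you make this substitution your proof is complete; the detour through Dirac masses $\delta_a$ is unnecessary and should be dropped --- a single smooth $\theta$ suffices and spares you the genericity bookkeeping.
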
 The result of Corollary \ref{TheoremInvariantMeasureForLargeTopologicalDegree}
belongs to Guedj \cite{guedj1}. Our proof here is slightly different from his proof in that we don't need to show that the measure $\mu$ has no mass on
proper analytic subvarieties.

\begin{corollary}
Let $X$ be a compact K\"ahler manifold, and let $f:X\rightarrow X$ be a surjective holomorphic map. Let $\lambda$ be a real eigenvalue of
$f^*:H^{p,p}(X)\rightarrow H^{p,p}(X)$, and let $0\not= \theta _{\lambda }\in H^{p,p}(X)$ be an eigenvector with eigenvalue $\lambda$. Assume moreover
that $|\lambda |>\delta _{p-1}(f)$. Then there is a closed current $T$ of order $2$ with $\{T\}=\theta _{\lambda }$ so that $f^{\sharp}(T)$ is
well-defined, and moreover $f^{\sharp}(T)=\lambda T$. \label{TheoremInvariantCurrentsForHyperpolicCohomologyHolomorphicMaps}\end{corollary}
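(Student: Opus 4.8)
The plan is to build the invariant current $T$ as a suitable limit of normalized pullbacks of a smooth representative of the eigenclass $\theta_\lambda$, following the classical construction of Green currents but tracking the fact that $f^\sharp$ here is only defined on currents of finite order. First I would fix a smooth closed $(p,p)$-form $\Omega$ with $\{\Omega\}=\theta_\lambda$, and consider the sequence $T_N = \lambda^{-N}(f^N)^*(\Omega)$. Since $f$ is holomorphic, all $(f^N)^*$ agree with iterates of $f^*$ on cohomology, so $\{T_N\}=\theta_\lambda$ for every $N$; moreover $(f^N)^*(\Omega)$ is a smooth form, hence of order $0$, and $f^\sharp$ applied to it reproduces $f^*$ of it by Lemma \ref{LemmaGoodPropetiesOfPullbackOperator}(i). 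Writing $\Omega = \Omega^+ - \Omega^-$ as a difference of (not necessarily closed, but smooth) pieces is not enough; instead I would split $\theta_\lambda = \{\Omega^+\} - \{\Omega^-\}$ with $\Omega^\pm$ positive closed smooth, and observe $T_N = \lambda^{-N}\big((f^N)^*\Omega^+ - (f^N)^*\Omega^-\big)$ with each $(f^N)^*\Omega^\pm$ positive closed smooth.

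The key estimate is on masses: because $f$ is holomorphic, $(f^N)^*$ is compatible with iteration on every $H^{q,q}(X)$, and the hypothesis $|\lambda|>\delta_{p-1}(f)$ forces $|\lambda| \geq \delta_p(f)$ as well (an eigenvalue of $f^*$ on $H^{p,p}$ has modulus at most $r_p(f)=\delta_p(f)$ since $f$ is $p$-algebraic stable, being holomorphic). Hence $\lambda^{-N}\|(f^N)^*\Omega^\pm\|$ stays bounded: the mass is controlled by the action on cohomology paired with $\omega_X^{k-p}$, and $\|(f^N)^*\| $ on $H^{p,p}$ grows like $\delta_p(f)^N \leq |\lambda|^N$ up to subexponential factors — here one must be slightly careful and either absorb a polynomial factor by passing to Cesàro averages $\frac1M\sum_{N=1}^M T_N$, or invoke a norm comparison as in the proof of Lemma \ref{LemmaInvariantCurrentForKahlerClass}. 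With uniformly bounded masses, the families $\{\lambda^{-N}(f^N)^*\Omega^\pm\}$ are relatively compact in the weak topology of positive closed currents; extract a subsequence (or use the Cesàro average, which converges directly) so that $\lambda^{-N_j}(f^{N_j})^*\Omega^\pm \rightharpoonup S^\pm$, both positive closed, and set $T = S^+ - S^-$. By weak continuity of the cohomology class under bounded-mass weak limits, $\{T\}=\theta_\lambda$. A standard telescoping argument — $f^*T_N = \lambda T_{N-1}$ at the level of smooth forms, pass to the limit — gives the invariance, but the passage to the limit must be routed through the definition of $f^\sharp$: one checks that $T$, being a difference of positive closed currents, is $DSH$, that each $\lambda^{-N}(f^N)^*\Omega^\pm$ is a good smooth approximant of $S^\pm$ in the sense of Definition \ref{DefinitionGoodApproximation} (boundedness and closedness are automatic from the mass bounds; the other axioms are inherited from a fixed good approximation scheme applied after the weak limit), and then $f^\sharp(T) = \lim f^*(\lambda^{-N}(f^N)^*\Omega) = \lambda \lim \lambda^{-(N-1)}(f^{N-1})^*\Omega = \lambda T$.

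Finally, for the order claim: each approximant $\lambda^{-N}(f^N)^*\Omega^\pm$ is smooth, and a weak limit of positive closed currents with bounded mass is of order $0$, so in fact $S^\pm$ — hence $T$ — are of order $0$, which is stronger than the asserted order $2$; I would simply state order $\leq 2$ to match the uniform phrasing of Corollaries \ref{TheoremInvariantMeasureForLargeTopologicalDegree} and \ref{TheoremInvariantCurrentsForHyperpolicCohomologyHolomorphicMaps}, or remark that $T$ is in fact positive-minus-positive closed of order $0$. The main obstacle I anticipate is not the existence of the limit but verifying that the resulting $T$ actually lies in the domain of $f^\sharp$ and that $f^\sharp(T)$ computed via Definition \ref{DefinitionPullbackCurrentsByMeromorphicMaps} equals $\lambda T$: one must confront the possibility that $f^\sharp$ does not commute with the weak limit unless the approximating forms $\lambda^{-N}(f^N)^*\Omega^\pm$ themselves form (or can be interleaved with) a good approximation scheme, and the cleanest route is probably to show directly that for any good scheme $K_n$ and any smooth test form $\alpha$, $\int_X (\lambda T)\wedge\alpha = \lim_n \int_Y T \wedge K_n(f_*\alpha)$ by expanding $T$ as the weak limit and using the commutativity axiom together with $f_*$–$f^*$ adjunction for the holomorphic map $f$.
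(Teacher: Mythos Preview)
Your mass estimate contains a real gap. You assert that $|\lambda|>\delta_{p-1}(f)$ together with $\lambda$ being an eigenvalue of $f^*$ on $H^{p,p}$ forces $|\lambda|\geq\delta_p(f)$; but the parenthetical you give actually proves the \emph{opposite} inequality $|\lambda|\leq r_p(f)=\delta_p(f)$, and nothing in the hypotheses rules out the strict case $\delta_{p-1}(f)<|\lambda|<\delta_p(f)$. In that situation $\|(f^N)^*\Omega^{\pm}\|$ is governed by the top growth on $H^{p,p}$, namely $\delta_p(f)^N$ up to a polynomial factor, so $|\lambda|^{-N}\|(f^N)^*\Omega^{\pm}\|$ blows up exponentially. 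Ces\`aro averaging absorbs polynomial factors, not exponential ones, and the reference to Lemma~\ref{LemmaInvariantCurrentForKahlerClass} does not help either: that lemma carries the extra hypothesis $\|(f^n)^*(\omega_X^p)\|\sim\lambda^n$, which is precisely what is missing here. Once compactness of $\lambda^{-N}(f^N)^*\Omega^{\pm}$ fails, the whole construction of $S^{\pm}$ and $T$ collapses.

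The paper sidesteps this by descending one bidegree, where the hypothesis $|\lambda|>\delta_{p-1}(f)$ is exactly the convergence condition needed. Because $f$ is holomorphic, pullback preserves smoothness, so $f$ is $dd^c$-$p$ stable in the sense of Definition~\ref{DefinitionDdcPStability}. Pick a smooth closed representative $\theta$ of $\theta_\lambda$; then $f^*\theta-\lambda\theta$ is a smooth closed exact form, hence $dd^c\alpha$ for a smooth $(p-1,p-1)$ form $\alpha$. This is condition~2) of Theorem~\ref{TheoremInvariantCurrents} with $T_0=\theta$, and the implication $2)\Rightarrow 1)$ there (via Lemma~\ref{LemmaSolvingTheDdcForPullback}) produces the invariant current: the series $R_\alpha=\sum_{j\geq 0}\lambda^{-j}(f^j)^*(-\alpha)$ converges because $\|(f^j)^*\alpha\|$ is bounded by a constant times $\|(f^j)^*\omega_X^{p-1}\|$, whose growth rate is $\delta_{p-1}(f)^j<|\lambda|^j$. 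The invariant current is $T=\theta+dd^c R_\alpha$, of order at most $2$ since $R_\alpha$ has order $0$. The well-definedness of $f^\sharp(T)$ and the equality $f^\sharp(T)=\lambda T$ are obtained not by interpreting the approximants as a good approximation scheme, but through the sandwich-type continuity of Theorem~\ref{TheoremContinuityPropertiesOfPullbackOperator} applied to the partial sums of $R_\alpha$.
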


{\bf Example 3:} Let $X=\mathbb{P}^2_{w_1}\times \mathbb{P}^2_{w_2}\times \mathbb{P}^2_{w_3}$, and let $f:X\rightarrow X$ to be
$f(w_1,w_2,w_3)=(P_2(w_2),P_3(w_3),P_1(w_1))$ where $P_1,P_2,P_3:\mathbb{P}^2\rightarrow \mathbb{P}^2$ are surjective holomorphic maps of degrees $\geq
2$, and not all of them are submersions (For example, we can choose one of them to be $P[z_0:z_1:z_2]=[z_0^d:z_1^d:z_2^d]$ for some integer $d\geq 2$).
Theorem \ref{TheoremInvariantCurrentsForHyperpolicCohomologyHolomorphicMaps} can be applied to find invariant currents for $f$.

\subsection{Acknowledgments and organization of the paper}

The author would like to thank his advisor Professor Eric Bedford for continuing guidance, support and encouragement in his studying in Indiana
University Bloomington. The author is grateful to Professor Tien Cuong Dinh for many explanations and discussions on the subjects of pullback of
currents, regularization of currents, and super-potentials; to Professor Jean Pierre Demailly for many explanations on positive closed currents; and to
Professor Phuc Cong Nguyen for many discussions on the integral kernels. The author is thankful to Professor Nessim Sibony for insightful suggestions on
earlier versions of the paper and for stimulating discussions which helped to improve the results and the exposition of the paper;  and to Professor
Lucia Alessandrini for helping with the Federer type support theorem in \cite{bassanelli} and for encouragement on an earlier version of this paper. The
discussions with Professors Jeffrey Diller and Norman Levenberg, and with the author's colleagues Turgay Bayraktar, Janli Lin and Thang Quang Nguyen, are
also invaluable.

Part of the paper is written while the author is visiting University of Paris 11. He would like to thank the university for its support and hospitality.

The rest of this paper is organized as follows: In Section 2 we collect some simple but helpful properties of positive currents. Then we consider the
pull-back operator in Section 3. In Section 4 we explore the properties of the map $J_X$. We will also give results concerning the operator $f^o$ on
positive closed currents defined by Dinh-Nguyen \cite{dinh-nguyen} (see Proposition \ref{PropositionTheOperatorDotPullback}), and concerning the
regularization results of Dinh-Sibony \cite{dinh-sibony1} (see Proposition \ref{PropositionNoVeryGoodApproximation}). In Section 5 we consider invariant
currents. We give examples of good approximation schemes and discuss some open questions in the last section.

\section{Some preliminary results}
In this section, we collect some simple but useful facts about positive currents. All the results presented are well known, but we include the proofs for the convenience of the readers. Through out this section, let $Z$ be a compact K\"ahler manifold of dimension $k$, with a K\"ahler $(1,1)$ form $\omega _Z$. Let $\pi _1,\pi _2:Z\times Z\rightarrow Z$ be the projections, and let $\Delta _Z\subset Z\times Z$ be the diagonal.

\begin{lemma}
Let $T$ be a continuous real $(p,p)$ form on $Z$. Then there exists a constant $A>0$ independent of $T$ so that
\begin{eqnarray*}
A||T || _{L^{\infty}}\omega _Z^p\pm T
\end{eqnarray*}
are both strongly positive forms.
\label{LemmaBoundContinuousForms}\end{lemma}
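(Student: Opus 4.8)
The plan is to reduce the statement to a uniform pointwise linear-algebra inequality and then globalize using compactness of $Z$. Strong positivity of a form is equivalent to strong positivity of its value at every point, and the value at a point lies in the finite-dimensional real vector space $V$ of real $(p,p)$-covectors; so it suffices to produce a constant $A$, uniform in $z$, with $A\,|T(z)|\,\omega_Z^p(z) \pm T(z)$ a strongly positive covector at each $z$, noting $|T(z)| \le \|T\|_{L^{\infty}}$.

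First I would work in a holomorphic coordinate chart around a fixed point $z$. There the complex structure is standard, so, reading covectors off in the coframe $\{dz_I \wedge d\bar z_J\}_{|I|=|J|=p}$, the set $C$ of strongly positive $(p,p)$-covectors is one fixed closed convex cone in $V$, the same for all points of the chart, while only the coefficients of $\omega_Z^p$ vary. The key input is that $\omega_Z^p(z)$ lies in the \emph{interior} of $C$: since $\omega_Z$ is Kähler, hence positive definite, after a linear change of coordinates $\omega_Z(z) = i\sum_j dz_j \wedge d\bar z_j$, whence $\omega_Z^p(z) = p!\sum_{|I|=p} i^{p^2}\, dz_I \wedge d\bar z_I$ is the sum of all coordinate $p$-area forms, and a power of a positive-definite $(1,1)$-form is strictly strongly positive (Demailly, \emph{Complex Analytic and Differential Geometry}, Ch.\ III; equivalently $\omega_Z^p \wedge v > 0$ for every nonzero positive $(k-p,k-p)$-covector $v$, so $\omega_Z^p(z)$ is interior to the cone dual to the positive cone, which is $C$). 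In particular $\operatorname{int}(C)\neq\emptyset$, so for a fixed norm $|\cdot|$ on $V$ there is $A_z > 0$ with $A_z\,\omega_Z^p(z) \pm \eta \in C$ whenever $|\eta| \le 1$; since $C$ is a cone containing $\omega_Z^p(z)$, this gives $c\,\omega_Z^p(z) \pm \eta \in C$ for all $\eta$ and all $c \ge A_z|\eta|$.

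Then I would globalize: cover $Z$ by finitely many coordinate charts and shrink to relatively compact pieces $U_1,\dots,U_N$ still covering $Z$. On each $\overline{U_l}$ the map $z \mapsto \omega_Z^p(z)$ is continuous into the open set $\operatorname{int}(C)$, so by compactness there is $\varepsilon_l > 0$ with $\omega_Z^p(z)+\eta \in C$ for all $z \in \overline{U_l}$, $|\eta|\le\varepsilon_l$; and equivalence of norms on the bundle of $(p,p)$-forms over the compact $\overline{U_l}$ gives $B_l$ with $|T(z)| \le B_l\|T\|_{L^{\infty}}$ there. Applying the pointwise bound with $\eta = T(z)$ and $c = (B_l/\varepsilon_l)\|T\|_{L^{\infty}}$ yields $(B_l/\varepsilon_l)\|T\|_{L^{\infty}}\,\omega_Z^p(z) \pm T(z) \in C$ on $U_l$. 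With $A := \max_l B_l/\varepsilon_l$, which depends on $Z$, $\omega_Z$ and the choices made but not on $T$, we conclude that $A\|T\|_{L^{\infty}}\,\omega_Z^p \pm T$ is strongly positive at every point, hence strongly positive as a form.

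The hard part will be the one pointwise fact that $\omega_Z^p$ is interior to the strongly positive cone — equivalently, that this cone is full-dimensional (so that $T$ is at all expressible as a difference of two strongly positive forms) and that $\omega_Z^p$ sits strictly inside it. If one only wanted the weaker conclusion that $A\|T\|_{L^{\infty}}\omega_Z^p \pm T$ is \emph{positive}, one could bypass this by diagonalizing $T$ with respect to $\omega_Z$ at each point and bounding eigenvalues by $\|T\|_{L^{\infty}}$ via uniform positivity of $\omega_Z$; but the strongly positive statement requested here genuinely needs the interior fact, so I would invoke it (e.g. from Demailly) at the outset. One may also isolate it to a single linear-algebra model by first observing that $\omega_Z^p - c^p\beta_0^p$ is strongly positive for a fixed Hermitian metric $\beta_0$ and small $c>0$; the rest is routine bookkeeping carried out chart-by-chart as above.
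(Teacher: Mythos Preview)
Your proof is correct and takes a genuinely different route from the paper's. The paper reduces via a partition of unity to a form compactly supported in a coordinate ball, expands $T$ in the basis $dz_I\wedge d\bar z_J$, and then invokes Demailly's lemma that each $dz_I\wedge d\bar z_J$ is a finite $\mathbb{C}$-linear combination of fixed strongly positive forms $\varphi_i$; bounding the real coefficients by $A\|T\|_{L^\infty}$ and each $\varphi_i$ by a multiple of $\omega_Z^p$ gives the conclusion. You instead isolate the single geometric fact that $\omega_Z^p(z)$ lies in the \emph{interior} of the strongly positive cone, deduce the pointwise inequality by convexity, and then globalize by compactness and equivalence of norms. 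The two arguments rest on the same underlying linear algebra (full-dimensionality of the strongly positive cone, with $\omega_Z^p$ strictly inside), but the paper accesses it through an explicit basis decomposition while you access it through the topology of the cone. Your version is cleaner and makes the dependence of $A$ on the data more transparent; the paper's version is more hands-on and makes the constant in principle computable from the decomposition of the $dz_I\wedge d\bar z_J$.
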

\begin{proof}
Since $Z$ is a compact K\"ahler manifold, there is a finite covering of $Z$ by open sets $U$'s each of them is biholomorphic to a ball in $\mathbb{C}^k$.
Using a partition of unity for this covering, we reduce the problem to the case where $T$ is a continuous real $(p,p)$ form compactly supported in a ball
in $\mathbb{C}^k$. Since $T$ is a real form, we can write
\begin{eqnarray*}
T=\sum _{|I|=|J|=p}(f_{I,J}dz_I\wedge d\overline{z_J}+\overline{f_{I,J}}d\overline{z_I}\wedge dz_J),
\end{eqnarray*}
where $f_{I,J}$ are bounded continuous complex-valued functions. By Lemma 1.4 page 130 in \cite{demailly}, $dz_I\wedge d\overline{z_J}$ can be
represented as a linear combination of strongly positive forms with complex coefficients. Let us write
\begin{eqnarray*}
dz_I\wedge d\overline{z_J}=\sum _{i\in \mathcal{A}}\alpha _{I,J,i}\varphi _{i},
\end{eqnarray*}
where $\mathcal{A}$ is a finite set independent of $I$ and $J$, $\varphi _i$ are fixed strongly positive $(p,p)$ forms, and $\alpha _{I,J,i}$ are complex
numbers. Then
\begin{eqnarray*}
d\overline{z_I}\wedge d{z_J}=\sum _{i\in \mathcal{A}}\overline{\alpha _{I,J,i}}\varphi _{i}.
\end{eqnarray*}
Hence $T$ can be represented in the form
\begin{eqnarray*}
T=\sum _{|I|=|J|=p}\sum _{i\in \mathcal{A}}f_{I,J,i}\varphi _{i},
\end{eqnarray*}
where $f_{I,J,i}=\alpha _{I,J,i}f_{I,J}+\overline{\alpha _{I,J,i}f_{I,J}}$ are bounded continuous real-valued functions satisfying $||f_{I,J,i}||_{L^{\infty}}\leq
A||T||_{L^{\infty}}$ for some constant $A>0$ independent of $T$. Each of the forms $\varphi _i$ can be bound by a multiplicity of $\omega _Z^p$, hence we
can find a constant $A>0$ independent of $T$ so that $A||T || _{L^{\infty}}\omega _Z^p\pm T$ are strongly positive forms.
\end{proof}
\begin{lemma}
Let $S$ be a strongly positive current on $Z$, and let $T$ be a continuous positive $(p,p)$ form. Then $S\wedge T$ is well-defined and is a positive
current.

Similarly, if $S$ is a positive current on $Z$, and $T$ is a continuous strongly positive $(p,p)$ form then $S\wedge T$ is well-defined and is a positive
current. \label{LemmaIntersectionOfPositiveContinuousForms}\end{lemma}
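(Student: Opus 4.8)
The plan is to define $S\wedge T$ by duality and then read off positivity from the elementary algebra of the positive cones of forms; no regularization of $T$ is needed. First I would recall that a positive current (here $S$, or the merely-positive current in the second case) has measure coefficients, hence is of order $0$ and extends to a continuous linear functional on the space of continuous forms. So for a smooth test form $\phi$ of the bidegree complementary to that of $S\wedge T$, the expression
\[
\langle S\wedge T,\phi\rangle := \langle S,\, T\wedge\phi\rangle
\]
makes sense: since $T$ is continuous and $\phi$ smooth, $T\wedge\phi$ is a continuous form of the correct bidegree, the map $\phi\mapsto T\wedge\phi$ is continuous from test forms to continuous forms (with bounds controlled by $\|T\|_{L^\infty}$, cf.\ Lemma \ref{LemmaBoundContinuousForms}), and composing with the order-$0$ current $S$ produces a well-defined current on $Z$. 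When $T$ is smooth this agrees with the usual wedge product, so the notation is consistent.

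Next I would check positivity, and here the only thing to be careful about is keeping the two flavours of positivity matched correctly. I would use the standard facts (see \cite{demailly}): the exterior product of two strongly positive forms is strongly positive, and the exterior product of a positive form with a strongly positive form is positive. In the first situation $S$ is strongly positive and $T$ is a positive form; for $\phi$ a strongly positive test form, $T\wedge\phi$ is a positive form, so $\langle S,T\wedge\phi\rangle\geq 0$ because a strongly positive current pairs nonnegatively with positive forms. Hence $S\wedge T$ is positive. In the second situation $S$ is positive and $T$ is strongly positive; for $\phi$ strongly positive, $T\wedge\phi$ is strongly positive, so $\langle S,T\wedge\phi\rangle\geq 0$ because a positive current pairs nonnegatively with strongly positive forms. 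Hence $S\wedge T$ is again positive.

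The argument is entirely local, so if desired one may first reduce to a ball in $\mathbb{C}^k$ via a partition of unity as in the proof of Lemma \ref{LemmaBoundContinuousForms} and expand $T$ in a fixed family of strongly positive generators, but this is cosmetic. The \emph{main obstacle}, such as it is, is precisely the bookkeeping between ``positive'' and ``strongly positive'': the duality definition forces $S$ and the test-form class to be of opposite flavour, so strong positivity of $S$ is exactly what is needed to absorb the merely-positive form $T$ (and symmetrically in the other case). Everything else reduces to order-$0$ continuity of positive currents together with the bilinear positivity of exterior multiplication of forms.
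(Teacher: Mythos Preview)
Your argument is correct and takes a more direct route than the paper. The paper approximates $T$ uniformly by smooth forms $T_n$, corrects them via Lemma~\ref{LemmaBoundContinuousForms} so that $T_n + A\|T-T_n\|_{L^\infty}\omega_Z^p$ is smooth and positive, and then realizes $S\wedge T$ as the weak limit of the positive currents $S\wedge(T_n + A\|T-T_n\|_{L^\infty}\omega_Z^p)$. You instead define $S\wedge T$ by duality and read off positivity from the algebra of the positive cones, which is cleaner. The one step you pass over is the assertion that a strongly positive current pairs nonnegatively with a \emph{continuous} positive form: by definition the pairing is nonnegative only against smooth positive test forms, and extending this to continuous ones requires knowing that smooth positive forms are $C^0$-dense in continuous positive ones. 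That density is precisely what the paper's correction $T_n \mapsto T_n + A\|T-T_n\|_{L^\infty}\omega_Z^p$ supplies (and is the real purpose of Lemma~\ref{LemmaBoundContinuousForms} here), so the two proofs ultimately rest on the same point; your presentation simply absorbs it into a ``standard fact.''
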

\begin{proof}
Since $S$ is a strongly positive current on $Z$, it is of order zero, hence can be wedged with a continuous form. Thus $S\wedge T$ is well-defined. Now
we show that $S\wedge T$ is a positive current.

We can approximate $T$ uniformly by smooth $(p,p)$ forms $T_n$. Then use Lemma \ref{LemmaBoundContinuousForms}, there is a constant $A>0$ independent of
$n$ so that $A||T-T_n||_{L^{\infty}}\omega _Z^p\pm (T-T_n)$ are strongly positive. Since $T$ is a positive form, this implies that
$T_n+A||T-T_n||_{L^{\infty}}\omega _Z^p$ are positive for all $n$. Since the current $S$ acts continuously on $C^0$ forms, and we chose $T_n$ to converge
uniformly to $T$, we have that
\begin{eqnarray*}
S\wedge T=\lim _{n\rightarrow\infty}S\wedge T_n=\lim _{n\rightarrow\infty}S\wedge (T_n +A||T-T_n||_{L^{\infty}}\omega _Z^p).
\end{eqnarray*}
Since $S$ is strongly positive and $T_n +A||T-T_n||_{L^{\infty}}\omega _Z^p$ are positive smooth forms, $S\wedge (T_n +A||T-T_n||_{L^{\infty}}\omega
_Z^p)$ are positive currents. Thus $S\wedge T$ is the weak limit of a sequence of positive currents, hence itself a positive current.
\end{proof}
\begin{lemma}
Let $T$ be a positive closed $(p,p)$ current on $Z$. Then there is a closed smooth $(p,p)$ form $\theta$ on $Z$ so that $\{\theta\}=\{T\}$ in cohomology,
and moreover
\begin{eqnarray*}
-A ||T|| \omega _Z^p\leq \theta \leq A ||T||\omega _Z^p.
\end{eqnarray*}
Here $A>0$ is independent of $T$. \label{LemmaSmoothRepresentativeOfPositiveClosedCurrents}\end{lemma}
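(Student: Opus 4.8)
The plan is to use Hodge theory on the compact K\"ahler manifold $Z$ to reduce the assertion to finite-dimensional linear algebra, and then to control the relevant periods of $T$ by means of Lemma \ref{LemmaBoundContinuousForms} together with the positivity of $T$. The output will be a smooth closed form $\theta$ representing $\{T\}$, with an explicit bound on $||\theta||_{L^{\infty}}$.

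First I would fix, once and for all and depending only on $Z$, $\omega_Z$ and $p$, closed smooth real $(p,p)$-forms $\theta_1,\ldots,\theta_m$ representing a basis of $H^{p,p}(Z)\cap H^{2p}(Z,\mathbb{R})$, together with closed smooth real $(k-p,k-p)$-forms $\gamma_1,\ldots,\gamma_m$ for which the matrix $\big(\int_Z\theta_i\wedge\gamma_j\big)_{i,j}$ is invertible. Such $\gamma_j$ exist because on a compact K\"ahler manifold the cup product induces a perfect pairing between the $(p,p)$- and the $(k-p,k-p)$-parts of cohomology: this follows from Poincar\'e duality and the observation that $\int_Z\alpha\wedge\beta=0$ whenever $\alpha$ has bidegree $(p,p)$ and $\beta$ has total degree $2k-2p$ but bidegree different from $(k-p,k-p)$. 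Note also that $\{T\}$ lies in $H^{p,p}(Z)\cap H^{2p}(Z,\mathbb{R})$, since $T$ is real and closed and the harmonic representative of the de Rham class of a closed $(p,p)$-current is a $(p,p)$-form.

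The only quantitative step is the following. For a positive closed $(p,p)$-current $T$ and each index $j$, the form $\gamma_j$ is a continuous real $(k-p,k-p)$-form, so Lemma \ref{LemmaBoundContinuousForms} in bidegree $(k-p,k-p)$ furnishes a constant $A_0>0$, independent of $T$ and $j$, such that $A_0||\gamma_j||_{L^{\infty}}\omega_Z^{k-p}\pm\gamma_j$ are strongly positive. Pairing these with the positive current $T$ gives $0\le\langle T,\,A_0||\gamma_j||_{L^{\infty}}\omega_Z^{k-p}\pm\gamma_j\rangle$, hence $|\langle T,\gamma_j\rangle|\le A_0||\gamma_j||_{L^{\infty}}\langle T,\omega_Z^{k-p}\rangle=A_0||\gamma_j||_{L^{\infty}}\,||T||$. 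Since $T$ is closed, $\langle T,\gamma_j\rangle$ depends only on $\{T\}$; writing $\{T\}=\sum_i c_i\{\theta_i\}$ we get $\langle T,\gamma_j\rangle=\sum_i c_i\int_Z\theta_i\wedge\gamma_j$, and invertibility of $\big(\int_Z\theta_i\wedge\gamma_j\big)_{i,j}$ expresses each $c_i$ as a fixed linear combination of the numbers $\langle T,\gamma_j\rangle$. Consequently $|c_i|\le A_1||T||$ with $A_1$ independent of $T$.

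Finally I would set $\theta:=\sum_i c_i\theta_i$, a closed smooth real $(p,p)$-form with $\{\theta\}=\{T\}$ and $||\theta||_{L^{\infty}}\le A_1\big(\sum_i||\theta_i||_{L^{\infty}}\big)||T||=:A_2||T||$. Applying Lemma \ref{LemmaBoundContinuousForms} once more, now in bidegree $(p,p)$, the forms $A_0||\theta||_{L^{\infty}}\omega_Z^p\pm\theta$ are strongly positive; since $A_0A_2||T||\omega_Z^p\pm\theta$ differs from these by $A_0(A_2||T||-||\theta||_{L^{\infty}})\omega_Z^p$, which is a non-negative multiple of the strongly positive form $\omega_Z^p$, it is strongly positive as well. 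Taking $A:=A_0A_2$ then gives $-A||T||\omega_Z^p\le\theta\le A||T||\omega_Z^p$ with $A$ independent of $T$, as required. No step in this argument is genuinely difficult; the one point that needs care is that every constant be chosen before $T$ enters the picture, which is why all the bases and the duality pairing are fixed at the outset and Lemma \ref{LemmaBoundContinuousForms} is invoked only afterwards. Equivalently, one may simply take $\theta$ to be the harmonic representative of $\{T\}$: the displayed bound on the periods $\langle T,\gamma_j\rangle$ together with the equivalence of norms on the finite-dimensional space of harmonic $(p,p)$-forms then yields $||\theta||_{L^{\infty}}\le A_2||T||$ directly.
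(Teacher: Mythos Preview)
Your argument is correct, but it differs from the paper's proof. The paper constructs $\theta$ by an integral transform: it fixes once and for all a smooth closed $(k,k)$-form $\Delta$ on $Z\times Z$ cohomologous to $[\Delta_Z]$, and sets $\theta=(\pi_1)_*(\pi_2^*(T)\wedge\Delta)$. The bound then follows from applying Lemma~\ref{LemmaBoundContinuousForms} to $\Delta$ on $Z\times Z$ and pushing forward, using Lemma~\ref{LemmaIntersectionOfPositiveContinuousForms}. Your route instead fixes a basis of $H^{p,p}(Z)\cap H^{2p}(Z,\mathbb{R})$ together with a Poincar\'e-dual basis, bounds the periods $\langle T,\gamma_j\rangle$ via Lemma~\ref{LemmaBoundContinuousForms} applied to the $\gamma_j$, and reads off $\theta$ as a controlled linear combination of the fixed basis forms. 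Both constructions are linear in $T$ with constants depending only on $(Z,\omega_Z,p)$; yours is arguably more elementary (pure finite-dimensional linear algebra plus one pointwise estimate), while the paper's diagonal-kernel construction has the virtue of being the same mechanism that underlies the regularization operators $K_n$ used throughout the paper, and is reused verbatim in the proof of Theorem~\ref{TheoremInterestingExample1}\,b) to produce smooth representatives $\phi_j^{\pm}$ that converge uniformly when $T_j\rightharpoonup T$.
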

\begin{proof}
Let $\pi _1,\pi _2:Z\times Z\rightarrow Z$ be the two projections, and let $\Delta _Z$ be the diagonal of $Z$. Let $\Delta$ be a closed smooth form on
$Z\times Z$ representing the cohomology class of $[\Delta _Z]$. If we define
\begin{eqnarray*}
\theta =(\pi _1)_*(\pi _2^*(T)\wedge \Delta ),
\end{eqnarray*}
it is a smooth $(p,p)$ current on $Z$ having the same cohomology class as $T$. Since $Z$ is compact, so is $Z\times Z$, and by Lemma
\ref{LemmaBoundContinuousForms} there is a constant $A>0$ so that $A (\pi _1^*\omega _Z+\pi _2^*\omega _Z)^{dim (Z)}\pm\Delta$ are strongly positive
forms. Since $T$ is a positive current, by Lemma \ref{LemmaIntersectionOfPositiveContinuousForms} it follows that
\begin{eqnarray*}
\theta =(\pi _1)_*(\pi _2^*(T)\wedge \Delta )\leq A(\pi _1)_*((\pi _1^*\omega _Z+\pi _2^*\omega _Z)^{dim (Z)}\wedge \pi _2^*(T))=A||T||\omega _Z^p.
\end{eqnarray*}
Similarly, we have also $\theta \geq -A ||T|| \omega _Z^p$.
\end{proof}
\begin{lemma}
Let $T_j$ be a sequence of $DSH^p(Z)$ currents converging in $DSH$ to a current $T$. Then for any continuous $(k-p,k-p)$ form $S$ we have
\begin{eqnarray*}
\lim _{j\rightarrow \infty}\int _ZT_j\wedge S=\int _ZT\wedge S.
\end{eqnarray*}
\label{LemmaConvergenceOfDSHCurrents}\end{lemma}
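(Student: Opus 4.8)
The plan is to use the uniform $DSH$-bound to upgrade the weak convergence $T_j\rightharpoonup T$ from smooth test forms to the merely continuous form $S$, by a standard three-term (``$\varepsilon/3$'') argument. Since $Z$ is a compact K\"ahler manifold, we may approximate $S$ uniformly by smooth $(k-p,k-p)$ forms: choose smooth $S_\varepsilon$ with $\|S-S_\varepsilon\|_{L^\infty}\to 0$ as $\varepsilon\to 0$ (cover $Z$ by coordinate balls, use a partition of unity and convolve in each chart); splitting into real and imaginary parts we may assume $S$ and the $S_\varepsilon$ are real. Each $T_j$, being $DSH$, is a difference of positive currents, hence of order $0$, so $\int_Z T_j\wedge S$ is defined; write $T_j=T_j^1-T_j^2$ with $T_j^i$ positive and $\|T_j^1\|+\|T_j^2\|\le\|T_j\|_{DSH}\le M:=\sup_j\|T_j\|_{DSH}<\infty$. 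By compactness of positive currents of bounded mass some subsequence satisfies $T_{j_k}^i\rightharpoonup S^i$ with $S^i$ positive and $\|S^i\|=\lim_k\langle T_{j_k}^i,\omega_Z^{k-p}\rangle\le M$; since $T_{j_k}\rightharpoonup T$, we get $T=S^1-S^2$. In particular $T$ has order $0$, so $\int_Z T\wedge S$ is defined.

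Next I would estimate the ``tail'' terms. Applying Lemma \ref{LemmaBoundContinuousForms} on $Z$ to the continuous real $(k-p,k-p)$ form $S-S_\varepsilon$ gives a constant $A>0$, independent of $\varepsilon$, with $A\|S-S_\varepsilon\|_{L^\infty}\omega_Z^{k-p}\pm(S-S_\varepsilon)$ strongly positive. Wedging with the positive current $T_j^i$ and invoking Lemma \ref{LemmaIntersectionOfPositiveContinuousForms} (in bidegree $k-p$), the currents $T_j^i\wedge\big(A\|S-S_\varepsilon\|_{L^\infty}\omega_Z^{k-p}\pm(S-S_\varepsilon)\big)$ are positive measures of top degree, hence have nonnegative integral, whence
\[
\Big|\int_Z T_j^i\wedge (S-S_\varepsilon)\Big|\le A\|S-S_\varepsilon\|_{L^\infty}\int_Z T_j^i\wedge\omega_Z^{k-p}=A\|S-S_\varepsilon\|_{L^\infty}\,\|T_j^i\|\le AM\|S-S_\varepsilon\|_{L^\infty}.
\]
Summing over $i$ gives $\big|\int_Z T_j\wedge(S-S_\varepsilon)\big|\le 2AM\|S-S_\varepsilon\|_{L^\infty}$ uniformly in $j$, and the identical argument applied to $S^1,S^2$ yields $\big|\int_Z T\wedge(S-S_\varepsilon)\big|\le 2AM\|S-S_\varepsilon\|_{L^\infty}$.

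Finally I would combine these. For every $j$ and $\varepsilon$,
\[
\int_Z T_j\wedge S-\int_Z T\wedge S=\int_Z T_j\wedge(S-S_\varepsilon)+\int_Z(T_j-T)\wedge S_\varepsilon+\int_Z T\wedge(S_\varepsilon-S).
\]
Given $\delta>0$, fix $\varepsilon$ with $4AM\|S-S_\varepsilon\|_{L^\infty}<\delta/2$, so the first and third terms together contribute less than $\delta/2$ for \emph{every} $j$; then, $S_\varepsilon$ being smooth, the weak convergence $T_j\rightharpoonup T$ provides $j_0$ with $\big|\int_Z(T_j-T)\wedge S_\varepsilon\big|<\delta/2$ for $j\ge j_0$. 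Hence $\big|\int_Z T_j\wedge S-\int_Z T\wedge S\big|<\delta$ for $j\ge j_0$, which is the assertion.

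\textbf{Main obstacle.} The only genuine point is the uniform-in-$j$ control of $\int_Z T_j\wedge(S-S_\varepsilon)$ for the non-positive currents $T_j$; this is precisely where the hypothesis that $\|T_j\|_{DSH}$ (and thus the masses of the positive and negative parts of $T_j$) stays bounded enters, through Lemmas \ref{LemmaBoundContinuousForms} and \ref{LemmaIntersectionOfPositiveContinuousForms}. Everything else is the routine upgrade of weak convergence from smooth to continuous test forms, plus a compactness argument to see that the limit $T$ is itself a bounded-mass difference of positive currents.
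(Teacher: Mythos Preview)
Your proof is correct and follows essentially the same approach as the paper's: both uniformly approximate $S$ by smooth forms, use Lemmas \ref{LemmaBoundContinuousForms} and \ref{LemmaIntersectionOfPositiveContinuousForms} together with the uniform $DSH$ bound to control the tail $\int_Z T_j\wedge(S-S_\varepsilon)$ uniformly in $j$, and then invoke weak convergence on the smooth approximant. The only cosmetic difference is that the paper packages the argument as a $\liminf/\limsup$ sandwich rather than your explicit three-term $\varepsilon/3$ decomposition, and asserts directly that $T=T^+-T^-$ (which you justify via compactness).
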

\begin{proof}
By assumption, $T_j$ weakly converges to $T$ in the sense of currents, and moreover we can write $T_j=T_j^{+}-T_j^{-}$ and $T=T^{+}-T^-$ where
$T_j^{\pm}$ and $T^{\pm}$ are positive currents, whose norms are uniformly bounded. Since $S$ is a continuous form, we can find a sequence of smooth
forms $S_n$ uniformly converging to $S$, i.e. we can choose $S_n$ smooth forms so that
\begin{eqnarray*}
-\frac{1}{n}\omega _Z^{k-p}\leq S-S_n\leq \frac{1}{n}\omega _Z^{k-p}.
\end{eqnarray*}
Hence by Lemma \ref{LemmaIntersectionOfPositiveContinuousForms}, for any $j$ and $n$
\begin{eqnarray*}
-\frac{1}{n}(||T_j^+||+||T_j^-||)+\int _ZT_j\wedge S_n \leq \int _ZT_j\wedge S\leq \frac{1}{n}(||T_j^+||+||T_j^-||)+\int _ZT_j\wedge S_n.
\end{eqnarray*}
Hence given a number $n$, letting $j\rightarrow \infty$, using the fact that $T_j\rightharpoonup T$, $S_n$ is smooth, and $||T_j||_{DSH}$ is uniformly
bounded
\begin{eqnarray*}
-\frac{A}{n}+\int _ZT\wedge S_n \leq \liminf _{j\rightarrow\infty}\int _ZT_j\wedge S\leq \limsup _{j\rightarrow\infty}\int _ZT_j\wedge S\leq
\frac{A}{n}+\int _ZT\wedge S_n,
\end{eqnarray*}
where $A>0$ is independent of $n$. Since $T$ is a difference of two positive currents, it is a current of order zero, hence acting continuously on the
space of continuous forms equipped with the sup norm. Since $S_n$ converges uniformly to $S$, we have
\begin{eqnarray*}
\lim _{n\rightarrow\infty}\int _ZT\wedge S_n =\int _ZT\wedge S.
\end{eqnarray*}
Combining this and the previous inequalities, letting $n\rightarrow \infty$, we obtain
\begin{eqnarray*}
\lim _{j\rightarrow \infty}\int _ZT_j\wedge S=\int _ZT\wedge S,
\end{eqnarray*}
as wanted.
\end{proof}

\section{Pull-back of $DSH$ currents}

First, we show the good properties of the operator $f^{\sharp}$
\begin{proof}(Of Lemma \ref{LemmaGoodPropetiesOfPullbackOperator}) Let $K_n=K_n^+-K_n^-$ be a good approximation scheme by $C^2$ forms.

i)  If $T$ is a continuous form, then $K_n^{\pm}(T)$ uniformly converges on $Y$. Hence there are continuous forms $T^+,T^-$ and constants $\epsilon _n$ decreasing to $0$, so that $T=T^+-T^-$ and $-\epsilon _n\omega _Y^p\leq K_n^{\pm}(T)-T^{\pm}\leq \epsilon _n\omega _Y^p$. Then
\begin{eqnarray*}
-\epsilon _nf^*(\omega _Y^p)\leq f^*(K_n^{\pm}(T))-f^*(T^{\pm})\leq \epsilon _nf^*(\omega _Y^p),
\end{eqnarray*}
and thus $f^*(K_n^{\pm}(T))$ weakly converges to $f^*(T^{\pm})$.  Therefore, $f^*(K_n^{+}(T)-K_n^{-}(T))$ weakly converges to $f^*(T^+)-f^*(T^-)=f^*(T)$. This shows that  $f^{\sharp}(T)$ is well-defined and coincides with the usual definition.

ii) Follows easily from the definition.

iii) If $T$ is $DSH$, the result follows from the definition and the fact that support of $K_n(T)$ converges to support of $T$.

iv) First we show that if $T=T_1+dd^cT_2$ is closed, where $T_1$ is a $(p,p)$ current and $T_2$ is a $(p-1,p-1)$ current both of order $0$, and $f^{\sharp}(T)$ is well-defined, then $f^{\sharp}(T)$ is closed.

From the assumption, it follows that $T_1$ is closed. To show that $f^{\sharp}(T)$ is closed, it suffices to show that if $\alpha$ is a $d$-exact $(dim(X)-p, dim(X)-p)$ smooth form, then
\begin{eqnarray*}
\int _Xf^{\sharp}(T)\wedge \alpha =0.
\end{eqnarray*}
In fact, by definition
\begin{eqnarray*}
\int _Xf^{\sharp}(T)\wedge \alpha =\lim _{n\rightarrow\infty}\int _YT_1\wedge K_n(f_*(\alpha ))+T_2\wedge dd^cK_n(f_*(\alpha )).
\end{eqnarray*}
By the $dd^c$ lemma, there is a smooth form $\beta$ so that $\alpha =dd^c(\beta )$. Then by the compatibility with differentials of good approximation schemes, we have $K_n(f_*(\alpha ))=K_n(f_*(dd^c\beta ))=dd^cK_n(f_*(\beta ))$ is $d$-exact. Thus each of the two integrals in the RHS of the above equality is $0$, independent of $n$. Hence the limit is $0$ as well.

Now we show that $\{f^{\sharp}(T)\}=f^*\{T\}$. Let $\theta$ be a smooth closed form so that $\{T\}=\{\theta\}$. Then there is a current $R$ so that $T-\theta =dd^c(R)$. If $\alpha $ is a closed smooth form then
\begin{eqnarray*}
\int _X(f^{\sharp }(T)-f^*(\theta ))\wedge \alpha &=&\lim _{n\rightarrow\infty}\int _Y(T-\theta )\wedge K_n(f_*(\alpha ))\\
&=&\lim _{n\rightarrow\infty}\int _Ydd^c(R)\wedge K_n(f_*(\alpha ))\\
&=&\lim _{n\rightarrow\infty}\int _YR\wedge K_n(f_*(dd^c \alpha ))=0,
\end{eqnarray*}
since $dd^c(\alpha )=0$. This shows that $\{f^{\sharp}(T)\}=\{f^*(\theta )\}$, and the latter is $f^*\{T\}$ by definition.

\end{proof}

\begin{proof} (Of Theorem \ref{TheoremPullbackByDesingularization}) Assume that $g^{\sharp}(T)$ is well-defined with respect to number $s$ in Definition
\ref{DefinitionPullbackDdcOfOrderSCurrents}. Let $\alpha$ be a smooth form on $X$ and $K_n$ a good approximation scheme by $C^{s+2}$ forms on $Y$. Then
$f_{*}(\alpha )=g_{*}(\pi ^{*}\alpha )$. Since $\pi ^{*}(\alpha )$ is smooth on $\widetilde{\Gamma _f}$ and $g^{\sharp}(T)$ is well-defined, we have
\begin{eqnarray*}
\lim _{n\rightarrow\infty}\int _YT\wedge K_n(f_{*}\alpha )&=&\lim _{n\rightarrow\infty}\int _YT\wedge K_n(g_{*}\pi ^*\alpha )\\
&=&\int _{\widetilde{\Gamma _f}}g^{\sharp}(T)\wedge \pi ^*\alpha =\int _{X}\pi _*g^{\sharp}(T)\wedge \alpha ,
\end{eqnarray*}
as wanted.
\end{proof}

Now we give the proofs of Theorems \ref{TheoremLocalPullbackForPositiveClosedCurrents}, \ref{TheoremInterestingExample1},
\ref{TheoremInterestingExample}, \ref{TheoremUseSiuDecomposition} and \ref{TheoremPullbackOfMeasures}.

\begin{proof} (Of Theorem \ref{TheoremLocalPullbackForPositiveClosedCurrents})
In this proof we use the value $s=0$ in Definitions \ref{DefinitionPullbackCurrentsByMeromorphicMaps} and \ref{DefinitionPullbackDdcOfOrderSCurrents}.
The proof is the same as the proof of Lemma 3.3 in \cite{dinh-sibony2} using the following observations:

i) Lemma 3.1 in \cite{dinh-sibony2} applies for $C^2$ forms $T_n$. Hence Lemma 3.3 in \cite{dinh-sibony2} applies to $C^2$ forms $T_n$.

ii) Let us choose two difference good approximation schemes by $C^2$ forms $K_n=K_n^+-K_n^-$ and $H_n=H_n^+-H_n^-$. Then the sequences $K_n^+(T)+H_n^-(T)$ and
$K_n^-(T)+H_n^+(T)$ converges in $DSH$ to a same positive current.

iii) Apply Lemma 3.3 in \cite{dinh-sibony2} to the sequences $K_n^+(T)+H_n^-(T)$ and $K_n^-(T)+H_n^+(T)$, we conclude that in $\Gamma
_f-\mathcal{C}_f$, the sequences $f^*(K_n^+(T))+f^*(H_n^-(T))$ and $f^*(K_n^-(T))+f^*(H_n^+(T))$ converges to a same current. Thus we have that the sequences $f^*(K
_n^+(T)-K_n^-(T))$ and $f^*(H_n^+(T)-H_n^-(T))$ converges in $\Gamma _f-\mathcal{C}_f$ to a same current.
\end{proof}

\begin{proof} (Of Theorem \ref{TheoremInterestingExample1}) We follow the proof of Proposition 5.2.4 in \cite{dinh-sibony4} with some appropriate modifications. Let $K_n=K_n^+-K_n^-$ be a good approximation scheme by $C^2$ forms.

a) First we show that $f^{\sharp}(T)$ is well-defined for any positive closed $(p,p)$ current $T$.

Let $\theta$ be a smooth closed $(p,p)$ form so that $\{\theta\}=\{T\}$ in cohomology classes. Since $T=(T-\theta )+\theta $, by Lemma
\ref{LemmaGoodPropetiesOfPullbackOperator}, to show that $f^{\sharp}(T)$ is well-defined, it is enough to show that $f^{\sharp}(T-\theta )$ is
well-defined. By $dd^c$ lemma (see also \cite{dinh-sibony5}), there is a $DSH$ current $R$ so that $T-\theta =dd^c(R)$. Hence to show that
$f^{\sharp}(T-\theta )$ is well-defined, it is enough to show that $f^{\sharp}(R)$ is well-defined.

We can write $K_{n}(R)=R_{1,n}-R_{2,n}$, where $R_{i,n}$ are positive $(p-1,p-1)$ forms of class
$C^2$, and $dd^c(R_{i,n})=\Omega _{i,n}^+-\Omega _{i,n}^-$, where $\Omega _{i,n}^{\pm}$ are positive closed $C^2$ $(p,p)$ forms.
Moreover, $||R_{i,n}||$ and $||\Omega _{i,n}^{\pm}||$ are uniformly bounded.

i) First we show that $||f^*(R_{i,n})||$ are uniformly bounded. Theorem
\ref{TheoremLocalPullbackForPositiveClosedCurrents} implies that $f^*(R_{i,n})$ converges in $X-\pi _X(\mathcal{C}_f)$ to a current. Since the codimension of $\pi _X(\mathcal{C}_f)$ is $\geq p$, it is weakly $p$-pseudoconvex (see Lemma 5.2.2 in \cite{dinh-sibony4}). Hence there exists
a smooth $(dim (X)-p,dim (X)-p)$ form $\Theta $ defined on $X$ so that $dd^c\Theta \geq 2\omega _X^{dim(X)-p+1}$ on $\pi _X(\mathcal{C}_f)$. We can
choose a small neighborhood $V$ of $\pi _X(\mathcal{C}_f)$ so that $dd^c\Theta \geq \omega _X^{dim(X)-p+1}$ on $V$. Since $R_{i,n}$ is a positive
$C^2$ form, $f^*(R_{i,n})$ is well defined and is a positive current. Since $f^*(R_{i,n})$ converges in $X-\pi _X(\mathcal{C}_f)$ to a current,
it follows that $||f^*(R_{i,n})||_{X-V}$ is bounded. Because
\begin{eqnarray*}
||f^*(R_{i,n})||_X=||f^*(R_{i,n})||_{X-V}+||f^*(R_{i,n})||_{V},
\end{eqnarray*}
to show that $||f^*(R_{i,n})||_{X}$ is bounded, it is enough to estimate $||f^*(R_{i,n})||_V$. We have
\begin{eqnarray*}
||f^*(R_{i,n})||_V&=&\int _Vf^*(R_{i,n})\wedge \omega _X^{dim(X)-p+1}\leq \int _Vf^*(R_{i,n})\wedge dd^c(\Theta )\\
&=&\int _Xf^*(R_{i,n})\wedge dd^c(\Theta )-\int _{X-V}f^*(R_{i,n})\wedge dd^c(\Theta ).
\end{eqnarray*}
The term
\begin{eqnarray*}
|\int _{X-V}f^*(R_{i,n})\wedge dd^c(\Theta )|
\end{eqnarray*}
can be bound by $||f^*(R_{i,n})||_{X-V}$, and thus is bounded. We estimate the other term: Since $X$ is compact
\begin{eqnarray*}
|\int _Xf^*(R_{i,n})\wedge dd^c(\Theta )|&=&|\int _Xdd^cf^*(R_{i,n})\wedge \Theta |=|\int _Xf^*(dd^cR_{i,n})\wedge \Theta |\\
&=&|\int _Xf^*(\Omega _{i,n}^+-\Omega _{i,n}^-)\wedge \Theta |.
\end{eqnarray*}
Since $\Omega _{i,n}^{\pm}$ are positive closed $C^2$ forms, $f^*(\Omega _{i,n}^{\pm})$ are well-defined and are positive closed currents.
Choose a constant $A>0$ so that $A\omega _X^{dim(X)-p}\pm \Theta$ are strictly positive forms, we have
\begin{eqnarray*}
&&|\int _Xf^*(\Omega _{i,n}^+-\Omega _{i,n}^-)\wedge \Theta |\\
&\leq&|\int _Xf^*(\Omega _{i,n}^+)\wedge \Theta |+|\int _Xf^*(\Omega
_{i,n}^-)\wedge \Theta |\\
&\leq&A\int _Xf^*(\Omega _{i,n}^+)\wedge \omega _X^{dim(X)-p}+A\int _Xf^*(\Omega _{i,n}^-)\wedge \omega _X^{dim(X)-p}.
\end{eqnarray*}
Since $\Omega _{i,n}^{\pm}$ are positive closed currents with uniformly bounded norms, the last integrals are uniformly bounded as well.

ii) From i) we see that for any good approximation scheme by $C^2$ forms $K_n$, the sequence $f^*(R_{1,n})-f^*(R_{2,n})$ has a
convergent sequence. We now show that the limit is unique, hence complete the proof of Theorem \ref{TheoremInterestingExample1}. So let $\tau$ be the limit
of the sequence $f^*(R_{1,n})-f^*(R_{2,n})$. Such a $\tau$ is a $DSH^{p-1}$ current by the consideration in i). Let $H_n=H_n^+-H_n^-$
be another good approximation scheme by $C^2$ forms, and let $\tau '$ be the corresponding limit, which is in $DSH^{p-1}$. We
want to show that $\tau =\tau '$. or equivalently, to show that $\tau -\tau '=0$.

By Theorem \ref{TheoremLocalPullbackForPositiveClosedCurrents}, $\tau -\tau '=0$ in $X-\pi _X(\mathcal{C}_f)$. Hence support of $\tau -\tau '$ is
contained in $\pi _X(\mathcal{C}_f)$. Since $\tau -\tau '$ is in $DSH^{p-1}$, it is a $\mathbb{C}$-flat $(p-1,p-1)$ current (see Bassanelli
\cite{bassanelli}). Because the codimension of $\pi _X(\mathcal{C}_f)$ is $\geq p$, it follows by Federer-type support theorem for $\mathbb{C}$-flat
currents (see Theorem 1.13 in \cite{bassanelli}) that $\tau -\tau '=0$ identically.

b) Finally, we show that if $T_j$ are positive closed $(p,p)$ currents converging in $DSH$ to $T$ then $f^{\sharp}(T_j)$ weakly converges to
$f^{\sharp}(T)$.

We let $\pi _1,\pi _2:Y\times Y\rightarrow Y$ be the projections, and let $\Delta _Y$ be the diagonal. As in the proof of Lemma
\ref{LemmaSmoothRepresentativeOfPositiveClosedCurrents}, we choose $\Delta $ to be a smooth closed $(dim (Y),dim (Y))$ on $Y$ having the same cohomology
class with $[\Delta _Y]$. We write $\Delta =\Delta ^+-\Delta ^-$, where $\Delta ^{\pm}$ are strongly positive smooth closed $(dim (Y),dim (Y))$ forms. If
we define $\phi _j^{\pm}=(\pi _1)_*(\pi _2^*(T_j)\wedge \Delta ^{\pm})$ and $\phi ^{\pm}=(\pi _1)_*(\pi _2^*(T)\wedge \Delta ^{\pm})$, then
$\{T_j\}=\{\phi _j^+-\phi _j^-\}$ and $\{T\}=\{\phi ^+-\phi ^-\}$. Moreover, $\phi _j^{\pm}$ are positive closed smooth forms converging uniformly to
$\phi ^{\pm}$. Hence $f^*(\phi _j^{\pm})$ weakly converges to $f^*(\phi ^{\pm})$. Thus to show that $f^{\sharp}(T_j)$ weakly converges to
$f^{\sharp}(T)$, it is enough to show that $f^{\sharp}(T_j-\phi _j)$ weakly converges to $f^{\sharp}(T-\phi )$, where we define $\phi _j=\phi _j^+-\phi
_j^-$ and $\phi =\phi ^+-\phi ^-$.

By Proposition 2.1 in \cite{dinh-sibony5}, there are positive $(p-1,p-1)$ currents $R_j^{\pm}$ and $R^{\pm}$ so that $T_j-\phi _j=dd^c(R_j^+-R_j^-)$,
$T-\phi =dd^c(R^+-R^-)$. Moreover, we can choose these in such a way that $R_j^{\pm}$ converges in $DSH$ to $R^{\pm}$. From the proof of a), $f^{\sharp}$
is well-defined on the set of $DSH^{p-1}$ currents. Thus to prove b) we need to show only that $f^{\sharp}(R_j^{\pm})$ weakly converges to
$f^{\sharp}(R^{\pm})$.

By Theorem \ref{TheoremLocalPullbackForPositiveClosedCurrents}, on $X-\pi _X(\mathcal{C}_f)$ the currents $f^{\sharp}(R_j^{\pm})$ and
$f^{\sharp}(R^{\pm})$ are the same as the currents $f^{o}(R_j^{\pm})$ and $f^{o}(R^{\pm})$ defined in \cite{dinh-sibony2}. Hence by the results in
\cite{dinh-sibony2}, it follows that $f^{\sharp}(R_j^{\pm})$ weakly converges in $X-\pi _X(\mathcal{C}_f)$ to $f^{\sharp}(R^{\pm})$. Thus as in the proof
of a), to show that $f^{\sharp}(R_j^{\pm})$ weakly converges to $f^{\sharp}(R^{\pm})$, it suffices to show that $||f^{\sharp}(R_j)||_{DSH}$ is uniformly
bounded.

The current $f^{\sharp}(R_j)$ is the limit of $f^*(\mathcal{K}_{n}(R_j))$. As in a), we write $K_{n}(R_j)=R_{j,n}^+-R_{j,n}^-$ where
$R_{j,n}^{\pm}$ are positive $DSH^{p-1}(Y)$ forms of class $C^2$. Moreover, by Theorem \ref{TheoremApproximationOfDinhAndSibony}, there is a
constant $A>0$ independent of $j$ and $n$ so that $||R_{j,n}^{\pm}||_{DSH}\leq A||R_j^{\pm}||_{DSH}$. It can be seen from the
proof of a) that $f^{\sharp}(R_j)$ is a $DSH^{p-1}$ current. Moreover $||f^{\sharp}(dd^cR_j)||_{DSH}$, which can be bound using intersections of
cohomology classes, is $\leq A||R_j||_{DSH}$, where $A>0$ is independent of $j$.

We choose an open neighborhood $V$ of $\pi _X(\mathcal{C}_f)$ and a form $\Phi$ as in the proof of a). Then we can see from a) that
\begin{eqnarray*}
||f^{\sharp}(R_j)||_{DSH}\leq A||f^{\sharp}(R_j)||_{X-V,DSH}+A||f^{\sharp}(dd^cR_j)||_{DSH},
\end{eqnarray*}
where $A>0$ is a constant independent of $j$, and $||f^{\sharp}(R_j)||_{X-V,DSH}$ means the $DSH$ norm of $f^{\sharp}(R_j)$ computed on the set $X-V$.
From the results in \cite{dinh-sibony2}, $||f^{\sharp}(R_j)||_{X-V,DSH}$ is uniformly bounded. The term $||f^{\sharp}(dd^cR_j)||_{DSH}$ was shown above
to be uniformly bounded as well. Thus $||f^{\sharp}(R_j)||_{DSH}$ is uniformly bounded as desired.
\end{proof}
\begin{proof} (of Theorem \ref{TheoremInterestingExample})

 Let $\theta$ be a closed smooth form on $Y$ having the same cohomology class as $T$. Since $T$ is continuous on $U=X-A$, there are $DSH^{p-1}$ currents $R^{\pm}$ so that $T-\theta =dd^c(R^+)-dd^c(R^-)$, where $R^{\pm}|_U$ are continuous (see Proposition 2.1 in \cite{dinh-sibony5}).  As in the proof of the Theorem \ref{TheoremInterestingExample1}, we will show that $f^{\sharp}(R^{\pm})$ are well-defined. Since $f^{-1}(A)\cap \pi _X(\mathcal{C}_f)\subset V$, where $V$ is of codimension $\geq p$, it is enough as before to show that $f^{*}(K_{n}^{\pm}(R^{\pm}))$ have bounded masses outside a small neighborhood of $f^{-1}(A)\cap \pi _X(\mathcal{C}_f)$. First, by the proof of Theorem \ref{TheoremInterestingExample1},  $f^{*}(K_{n}^{\pm}(R^{\pm}))$ have bounded masses outside a small neighborhood of $\pi _X(\mathcal{C}_f)$. Hence it remains to show that  $f^{*}(K_{n}^{\pm}(R^{\pm}))$ have bounded masses outside a small neighborhood of $f^{-1}(A)$.

Let $B$ be a small neighborhood of $f^{-1}(A)$. Then there is a cutoff function $\chi $ for $A$, so that $f^{-1}(supp(\chi ))\subset B$. We write
\begin{eqnarray*}
f^{*}(K_{n}^{\pm}(R^{\pm}))=f^{*}(\chi K_{n}^{\pm}(R^{\pm}))+f^{*}((1-\chi )K_{n}^{\pm}(R^{\pm})).
\end{eqnarray*}
The first current has support in $B$, and hence has no contribution for the mass of $f^{*}(K_{n}^{\pm}(R^{\pm}))$ outside $B$. By properties of good approximation schemes by $C^2$ forms, $(1-\chi )K_{n}^{\pm}(R^{\pm})$ uniformly converges to a continuous form on $Y$, and hence $f^{*}((1-\chi )K_{n}^{\pm}(R^{\pm}))$ has uniformly bounded masses on $X$, which is what wanted to prove.

To complete the proof, we need to show the continuity stated in the theorem. This continuity can be proved using the arguments from the first part of the
proof, and from part b) of the proof of Theorem \ref{TheoremInterestingExample1} and the proof of Proposition \ref{PropositionUniformlyApproximation}.
\end{proof}

\begin{proof} (Of Theorem \ref{TheoremUseSiuDecomposition})

By assumption and Corollary \ref{CorollaryPullBackVariety}, if $V$ is an analytic variety of codimension $p$ contained in $E(T)$, then $f^{\sharp}[V]$ is
well-defined with the number $s=0$ in Definition \ref{DefinitionPullbackDdcOfOrderSCurrents}. Hence the currents
\begin{eqnarray*}
W_N=\sum _{j=1}^n\lambda _j[V_j]
\end{eqnarray*}
can be pulled back with the same number $s=0$ in Definition \ref{DefinitionPullbackDdcOfOrderSCurrents}, here $N$ is a positive integer. Since $0\leq
\sum _j\lambda _j[V_j] -W_N=S_N$ where $S_N\rightharpoonup 0$ as $N\rightarrow \infty$, by Theorem \ref{TheoremContinuityPropertiesOfPullbackOperator} it
follows that $f^{\sharp}(\sum _j\lambda _j[V_j])=\sum _j\lambda _jf^{\sharp}[V_j]$ is well-defined.
\end{proof}

\begin{proof} (Of Theorem \ref{TheoremPullbackOfMeasures})
Let $T$ be a positive measure on $Y$ having no mass on $\pi _Y(\mathcal{C}_f)$. Let $K_n$ be a good approximation scheme by $C^2$ forms. Then we will show that as $n$ converges to $\infty$,
any limit point of $[\Gamma _f]\wedge \pi _Y^*(K_{n}(T))$ has no mass on $\mathcal{C}_f$. Thus $\lim _{n\rightarrow\infty}[\Gamma _f]\wedge \pi
_Y^*(K_{n}(T))=(\pi _Y|_{\Gamma _f})^*(T)$ where the RHS is defined in \cite{dinh-sibony2}. Then $f^{\sharp}(T)$ is well-defined, and moreover
equals to the current $f^o(T)$ defined in \cite{dinh-sibony2}, thus satisfies all the conclusions of Theorem \ref{TheoremPullbackOfMeasures}.

Now we proceed to prove that any limit point $\tau $ of $[\Gamma _f]\wedge \pi _Y^*(K_{n}(T))$ has no mass on $\mathcal{C}_f$. This is
equivalent to showing that for a smooth $(dim (X),dim(X))$ form $\alpha$ on $X\times Y$, and for a sequence $\theta _j$ of smooth functions on $X\times
Y$ having the properties: $0\leq \theta _j\leq 1$, $\theta _j=1$ on a neighborhood of $\mathcal{C}_f$, and support of $\theta _j$ converges to
$\mathcal{C}_f$ then:
\begin{eqnarray*}
\lim _{j\rightarrow\infty}\lim _{n\rightarrow\infty}\int _{X\times Y}\theta _j\alpha \wedge [\Gamma _f]\wedge \pi _Y^*(K_{n}(T))=0.
\end{eqnarray*}
By properties of good approximation schemes by $C^2$ forms, we can write the above equality as
\begin{equation}
\lim _{j\rightarrow\infty}\lim _{n\rightarrow\infty}\int _{X\times Y}T\wedge K_{n}((\pi _Y)_*(\theta _j\alpha \wedge [\Gamma
_f]))=0.\label{EquationTheoremPullbackMeasures1}
\end{equation}
Writing $\alpha$ as the difference of two positive smooth forms, we may assume that
$\alpha$ is positive. Now $\alpha$ is a positive smooth form, since $0\leq \theta _j\leq 1$ for all $j$, we can bound the function $(\pi _Y)_*(\theta
_j\alpha \wedge [\Gamma _f])$ by a multiplicity of $(\pi _Y)_*(\omega _{X\times Y}^{dim(X)} \wedge [\Gamma _f])$ independently of $j$. The later is a
constant, thus $(\pi _Y)_*(\theta _j\alpha \wedge [\Gamma _f])$ is a positive bounded function. Then $K_{n}((\pi
_Y)_*(\theta _j\alpha \wedge [\Gamma _f]))$ are $C^2$ functions uniformly bounded w.r.t. $j$ and $n$. Moreover,
the support of $K_{n}((\pi _Y)_*(\theta _j\alpha \wedge [\Gamma _f]))$ converges to $\pi _Y(\mathcal{C}_f)$ as $j\rightarrow\infty$,
independent of $n$. Because $T$ has no mass on $\pi _Y(\mathcal{C}_f)$, we can then apply Lebesgue's dominated convergence theorem
to obtain (\ref{EquationTheoremPullbackMeasures1}).
\end{proof}

\section{The map $J_X$}
Through out this section, let $X$ be the blowup of $\mathbb{P}^3$ along $4$ points $e_0=[1:0:0:0],e_1=[0:1:0:0],e_2=[0:0:1:0], e_3=[0:0:0:1]$;
$J:\mathbb{P}^3\rightarrow \mathbb{P}^3$ is the Cremona map $J[x_0:x_1:x_2:x_3]=[1/x_0:1/x_1:1/x_2:1/x_3]$, and let $J_X$ be the lifting of $J$ to $X$.
For $0\leq i\not= j\leq 3$, $\Sigma _{i,j}$ is the line in $\mathbb{P}^3$ consisting of points $[x_0:x_1:x_2:x_3]$ where $x_i=x_j=0$, and
$\widetilde{\Sigma _{i,j}}$ is the strict transform of $\Sigma _{i,j}$ in $X$.

Let $E_0,E_1,E_2,E_3$ be the corresponding exceptional divisors of the blowup $X\rightarrow \mathbb{P}^3$, and let $L_0,L_1,L_2,L_3$ be any lines in
$E_0,E_1,E_2,E_3$ correspondingly. Let $H$ be a generic hyperplane in $\mathbb{P}^3$, and let $H^2$ be a generic line in $\mathbb{P}^3$. Then
$H,E_0,E_1,E_2,E_3$ are a basis for $H^{1,1}(X)$, and $H^2,L_0,L_1,L_2,L_3$ are a basis for $H^{2,2}(X)$. Intersection products in complementary
dimensions are (see for example Chapter 4 in \cite{griffiths-harris}):
\begin{eqnarray*}
&&H.H^2=1,~H.L_0=0,~H.L_1=0,~H.L_2=0,~H.L_3=0,\\
&&E_0.H^2=0,~E_0.L_0=-1,~E_0.L_1=0,~E_0.L_2=0,~E_0.L_3=0,\\
&&E_1.H^2=0,~E_1.L_0=0,~E_1.L_1=-1,~E_1.L_2=0,~E_1.L_3=0,\\
&&E_2.H^2=0,~E_2.L_0=0,~E_2.L_1=0,~E_2.L_2=-1,~E_1.L_3=0,\\
&&E_3.H^2=0,~E_3.L_0=0,~E_3.L_1=0,~E_3.L_2=0,~E_3.L_3=-1.
\end{eqnarray*}
The map $J_X^*:H^{1,1}(X)\rightarrow H^{1,1}(X)$ is not hard to compute (see for example the computations in Example 2.5 in \cite{guedj}):
\begin{eqnarray*}
J_X^*(H)&=&3H-2E_0-2E_1-2E_2-2E_3,\\
J_X^*(E_0)&=&H-E_1-E_2-E_3,\\
J_X^*(E_1)&=&H-E_0-E_2-E_3,\\
J_X^*(E_2)&=&H-E_0-E_1-E_3,\\
J_X^*(E_3)&=&H-E_0-E_1-E_2.
\end{eqnarray*}
If $x\in H^{1,1}(X)$ and $y\in H^{2,2}(X)$, since $J_X^2=$the identity map on $X$, we have the duality $(J_X^*y).x=y.(J_X^*x)$. Thus from the above data,
we can write down the map $J_X^*:H^{2,2}(X)\rightarrow H^{2,2}(X)$:
\begin{eqnarray*}
J_X^*(H^2)&=&3H^2-L_0-L_1-L_2-L_3,\\
J_X^*(L_0)&=&2H^2-L_1-L_2-L_3,\\
J_X^*(L_1)&=&2H^2-L_0-L_2-L_3,\\
J_X^*(L_2)&=&2H^2-L_0-L_1-L_3,\\
J_X^*(L_3)&=&2H^2-L_0-L_1-L_2.
\end{eqnarray*}
Now we are ready to prove Corollary \ref{CorollaryTheMapJ}.
\begin{proof} (of Corollary \ref{CorollaryTheMapJ})
The restriction $J_X:X-\bigcup \widetilde{\Sigma _{i,j}}\rightarrow X-\bigcup \widetilde{\Sigma _{i,j}}$ is a biholomorphic map, as can be seen by using
local coordinate projections for the blowup $\pi$ near the exceptional divisors $E_i$'s. Moreover it can be shown that $J_X(\widetilde{\Sigma
_{i,j}})=\widetilde{\Sigma _{3-i,3-j}}$, and every point on $\widetilde{\Sigma _{i,j}}$ blows up to $\widetilde{\Sigma _{3-i,3-j}}$. Hence $\pi
_1(\mathcal{C}_{J_X})= \bigcup \widetilde{\Sigma _{i,j}}$. Therefore the map $J_X$ satisfies Theorem \ref{TheoremInterestingExample1} for $p=2$. Thus if
$T$ is a positive closed $(2,2)$ current on $X$ then $J_X^{\sharp}(T)$ is well-defined. For an alternative proof of this fact, see Lemma
\ref{LemmaOtherGoodPropertyOfTheMapJ} below.

It remains to show that $J_X^{\sharp}[\widetilde{\Sigma _{0,1}}]=-[\widetilde{\Sigma _{2,3}}]$. Since $J_X^{-1}(\widetilde{\Sigma
_{0,1}})=\widetilde{\Sigma _{2,3}}$, by Theorem \ref{TheoremInterestingExample} there is a number $\lambda $ so that $J_X^{\sharp}[\widetilde{\Sigma
_{0,1}}]=\lambda [\widetilde{\Sigma _{2,3}}]$. To determine $\lambda$, we need to know $J_X^{*}\{\widetilde{\Sigma _{0,1}}\}$. We have
$\{\widetilde{\Sigma _{0,1}}\}=\{H^2-L_2-L_3\}$, hence from the above data we have
$$J_X^{*}\{\widetilde{\Sigma _{0,1}}\}=J_X^{*}\{H^2\}-J_X^{*}\{L_2\}-J_X^*\{L_3\}=\{-H^2+L_0+L_1\}=-\{\widetilde{\Sigma _{2,3}}\},$$
thus $\lambda =-1$, and $J_X^{\sharp}[\widetilde{\Sigma _{0,1}}]=- [\widetilde{\Sigma _{2,3}}]$.
\end{proof}

The following result gives an alternative proof to the conclusions of Corollary \ref{CorollaryTheMapJ}. In its proof we will make use of the space $Y$
defined in the statement of Proposition \ref{PropositionTheOperatorDotPullback} below. Here $\pi :Y\rightarrow X$ is the blowup of $X$ along all
submanifolds $\widetilde{\Sigma _{i,j}}$ ($1\leq i<j\leq 3$). Then the lifting map $J_Y$ of $J$ to $Y$ is an involutive automorphism. Moreover, if we let
$S_{i,j}$ denote the exceptional divisor of $Y$ over $\widetilde{\Sigma _{i,j}}$, then $J_Y(S_{0,1})=S_{2,3}$, $J_Y(S_{0,2})=S_{1,3}$, and
$J_Y(S_{0,3})=S_{1,2}$.

\begin{lemma}
Let $T_n^+$ and $T_n^-$ be positive closed smooth $(2,2)$ forms on $X$, so that

i) $||T_n^+||,~||T_n^-||$ are uniformly bounded,

and

ii) $T_n^+-T_n^-\rightharpoonup [\widetilde{\Sigma _{0,1}}]$.

Then $J_X^*(T_n^+-T_n^-)\rightharpoonup -[\widetilde{\Sigma _{2,3}}]$.

As a consequence, if we replace $[\widetilde{\Sigma _{0,1}}]$ in i) and ii) above by any positive closed $(2,2)$ current $T$ then $J_X^*(T_n^+-T_n^-)$
converges to $J_X^{\sharp}(T)$. \label{LemmaOtherGoodPropertyOfTheMapJ}\end{lemma}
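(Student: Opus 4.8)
The plan is to prove Lemma~\ref{LemmaOtherGoodPropertyOfTheMapJ} by lifting everything to the smooth model $Y$ described just above the statement, where $J_Y$ is a genuine automorphism and all intersection-theoretic computations become classical. First I would push the approximating forms up: since $\pi:Y\rightarrow X$ is a modification, the pullbacks $\pi^*(T_n^{\pm})$ are positive closed smooth $(2,2)$ forms on $Y$ with $||\pi^*(T_n^{\pm})||$ uniformly bounded (boundedness of mass is preserved under pullback by a modification, as the cohomology classes $\{\pi^*T_n^{\pm}\}=\pi^*\{T_n^{\pm}\}$ stay in a bounded region and $\pi^*$ is a bounded linear map on cohomology), and $\pi^*(T_n^+)-\pi^*(T_n^-)\rightharpoonup \pi^*[\widetilde{\Sigma_{0,1}}]$. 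Here $\pi^*[\widetilde{\Sigma_{0,1}}]$ is the strict transform $[\widehat{\Sigma_{0,1}}]$ of $\widetilde{\Sigma_{0,1}}$ in $Y$ plus possibly a multiple of an exceptional component, but since $\widetilde{\Sigma_{0,1}}$ is one of the centers being blown up and has codimension $2$ in $X$, one checks that $\pi^*[\widetilde{\Sigma_{0,1}}]$ is simply $[\widehat{\Sigma_{0,1}}]$ together with exceptional contributions supported on $S_{0,1}$; I would track the precise class.

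Next, because $J_Y$ is an automorphism of the compact K\"ahler manifold $Y$, pullback under $J_Y$ is continuous for weak convergence on currents of any fixed bidegree (it is just the smooth pullback composed with the inverse, which is continuous). Hence $J_Y^*(\pi^*T_n^+ - \pi^*T_n^-)\rightharpoonup J_Y^*(\pi^*[\widetilde{\Sigma_{0,1}}])$, and $J_Y^*(\pi^*[\widetilde{\Sigma_{0,1}}]) = (J_Y)_*^{-1}\pi^*[\widetilde{\Sigma_{0,1}}]$ can be computed geometrically using $J_Y(S_{0,1})=S_{2,3}$ and the fact that $J_Y$ carries $\widehat{\Sigma_{0,1}}$ to $\widehat{\Sigma_{2,3}}$ (indeed $J$ interchanges the lines $\Sigma_{0,1}$ and $\Sigma_{2,3}$ in $\mathbb{P}^3$). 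Then I would push back down by $\pi$: by Theorem~\ref{TheoremPullbackByDesingularization} and the compatibility of $f^\sharp$ with desingularization, $J_X^*(T_n^+-T_n^-) = \pi_*J_Y^*(\pi^*T_n^+-\pi^*T_n^-)$ up to controlled error, and $\pi_*$ is weakly continuous (pushforward is always continuous on currents), so the limit is $\pi_*$ of the class computed on $Y$. The upshot should be that all the exceptional contributions on $S_{0,1}$ and $S_{2,3}$ either cancel or push forward to zero (they are supported on sets of the wrong dimension under $\pi$), leaving $-[\widetilde{\Sigma_{2,3}}]$; the sign $-1$ is forced by the cohomology computation $J_X^*\{\widetilde{\Sigma_{0,1}}\}=-\{\widetilde{\Sigma_{2,3}}\}$ already carried out in the proof of Corollary~\ref{CorollaryTheMapJ}, since $\pi_*J_Y^*\pi^*$ must realize that cohomology class and the limit current is supported on $\widetilde{\Sigma_{2,3}}$.

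For the ``as a consequence'' clause, I would argue that the map $T \mapsto \lim_n J_X^*(T_n^+-T_n^-)$ is well-defined independent of the approximating sequences by invoking Lemma~\ref{LemmaNotAllCanBePullbacked}: the hypothesis of that lemma for a single smooth form $T$ reduces, via the same lift-pullback-push argument, to the weak continuity of $J_Y^*$ on smooth forms of bounded mass, which holds because $J_Y$ is an automorphism. Once the limit is sequence-independent it must coincide with $J_X^\sharp(T)$ by Definition~\ref{DefinitionPullbackCurrentsByMeromorphicMaps} together with Theorem~\ref{TheoremInterestingExample1} (which already guarantees $J_X^\sharp(T)$ exists for every positive closed $(2,2)$ current $T$, since $\pi_1(\mathcal{C}_{J_X})=\bigcup\widetilde{\Sigma_{i,j}}$ has codimension $2$).

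The main obstacle I anticipate is bookkeeping the exceptional divisor contributions precisely: $\pi^*[\widetilde{\Sigma_{0,1}}]$ is not literally $[\widehat{\Sigma_{0,1}}]$ but carries a correction term supported on $S_{0,1}$ (coming from the normal bundle / excess intersection of the blowup along a curve in a threefold), and I must verify both that $J_Y^*$ sends this whole package to the analogous package over $\widetilde{\Sigma_{2,3}}$ and that $\pi_*$ kills exactly the correction terms, leaving the clean answer $-[\widetilde{\Sigma_{2,3}}]$. The cohomological identity from Corollary~\ref{CorollaryTheMapJ} is the sanity check that pins down the overall normalization, so even if the divisor-theoretic intermediate classes on $Y$ are messy, the final pushforward is constrained to be the right multiple of $[\widetilde{\Sigma_{2,3}}]$; the real work is checking that no stray positive-dimensional exceptional current survives in $\pi_*$, for which I would use that $\pi$ restricted to each $S_{i,j}$ fibers over the curve $\widetilde{\Sigma_{i,j}}$ with positive-dimensional fibers, so pushforward of any $(2,2)$-current supported on $S_{i,j}$ that is not already a multiple of $[\widetilde{\Sigma_{i,j}}]$ lands in lower bidegree or vanishes.
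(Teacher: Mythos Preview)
Your overall architecture---lift to $Y$, use that $J_Y$ is an automorphism, push back down---matches the paper's approach. But there is a genuine gap at the second step: you write that $\pi^*(T_n^+)-\pi^*(T_n^-)\rightharpoonup \pi^*[\widetilde{\Sigma_{0,1}}]$, and this is not justified. Pullback by the modification $\pi$ is \emph{not} weakly continuous on currents; indeed, $\widetilde{\Sigma_{0,1}}$ is one of the centers being blown up, so $\pi^{-1}(\widetilde{\Sigma_{0,1}})=S_{0,1}$ has codimension $1$, and by the discussion around Example~2 and Corollary~\ref{CorollaryPullBackVariety} one should not expect $\pi^{\sharp}[\widetilde{\Sigma_{0,1}}]$ to make sense at all. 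What you actually get, after passing to a subsequence, are positive closed limits $\tau^{\pm}$ of $\pi^*(T_n^{\pm})$, and the difference $\tau^+-\tau^-$ may have support on \emph{every} exceptional divisor $S_{i,j}$, not only on $S_{0,1}$. Different subsequences could even give different currents $\tau^+-\tau^-$; all you know a priori is that $\pi_*(\tau^+-\tau^-)=[\widetilde{\Sigma_{0,1}}]$.

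The paper confronts exactly this issue. It decomposes $\tau^+-\tau^-=\sum_{i,j}\tau_{i,j}$ with $\tau_{i,j}$ supported on $S_{i,j}$, and then proves via a cohomology argument on each $S_{i,j}\simeq\mathbb{P}^1\times\mathbb{P}^1$ that for $(i,j)\neq(0,1)$ the class $\{\tau_{i,j}\}$ vanishes in $H^{2,2}(Y)$ (using that $\pi_*\tau_{i,j}=0$ forces the ``horizontal'' coefficient to be zero, and a further intersection computation with $\{S_{k,l}\}$ kills the ``vertical'' coefficients). Only after this does one know that $\pi_*(J_Y^*\tau_{i,j})$, which is necessarily a multiple of $[\widetilde{\Sigma_{3-i,3-j}}]$, is zero for $(i,j)\neq(0,1)$, so that the full pushforward is supported on $\widetilde{\Sigma_{2,3}}$ and the cohomology pins down the coefficient as $-1$. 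Your last paragraph hints that you sense some difficulty here, but the obstacle is not merely bookkeeping an excess-intersection correction on $S_{0,1}$; it is that you must rule out nontrivial contributions living on the \emph{other} $S_{i,j}$, and that requires the cohomological argument the paper supplies.
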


\begin{proof}
Let $\tau _n^+=\pi ^*(T_n^+)$ and $\tau _n^-=\pi ^*(T_n^-)$, which are positive closed currents on $Y$. By assumption i), $||\tau _n^+||$ and $||\tau
_n^-||$ are uniformly bounded. Thus we may assume that $\tau _n^+\rightharpoonup \tau ^+$ and $\tau _n^-\rightharpoonup \tau ^-$, where $\tau ^+$ and
$\tau ^-$ are positive closed currents on $Y$.

Since $J_Y$ is a biholomorphic map, we can pull-back any current on $Y$ by $J_Y$. It is not hard to see that
\begin{eqnarray*}
J_X^*(T_n^+)&=&\pi _*(J_Y^* \tau _n^+),\\
J_X^*(T_n^-)&=&\pi _*(J_Y^* \tau _n^-).
\end{eqnarray*}
Hence
\begin{eqnarray*}
J_X^*(T_n^+-T_n^-)\rightharpoonup \pi _*(J_Y^*(\tau ^+-\tau ^-)).
\end{eqnarray*}

We need to show that the latter current is $-[\widetilde{\Sigma _{2,3}}]$. To this end, it suffices to
show that support of $\pi _*(J_Y^*(\tau ^+-\tau ^-))$ is in $\widetilde{\Sigma _{2,3}}$. In fact, then we will have $\pi _*(J_Y^*(\tau ^+-\tau ^-))=\lambda [\widetilde{\Sigma _{2,3}}]$, and the computation on cohomology shows that $\lambda =-1$.

It is not hard to see that support of $\tau ^+-\tau ^-$ is contained in the union of $S_{i,j}$'s ($1\leq i<j\leq 3$). Let $\tau _{i,j}=\tau
^+|_{S_{i,j}}-\tau ^-|_{S_{i,j}}$ with support in $S_{i,j}$ so that $\tau =\sum _{1\leq i<j\leq 3}\tau _{i,j}$. In $H^{2,2}(Y)$ we have:
\begin{eqnarray*}
\pi ^*\{\widetilde{\Sigma _{0,1}}\}=\{\tau ^+-\tau ^-\}=\sum _{i,j}\{\tau _{i,j}\},
\end{eqnarray*}
here $\pi ^*\{\widetilde{\Sigma _{0,1}}\}$ can be represented by currents with support in $S_{0,1}$. Moreover, by considering the push-forwards $\pi
_*(\tau _n^{+}-\tau _n^{-})$, it follows that $\pi _*(\tau _{i,j})=0$ where $(i,j)\not= (0,1)$. It can be checked that each fiber $S_{i,j}$ is a product
$S_{i,j}\simeq \mathbb{P}^1\times \mathbb{P}^1$, hence by Kuneth's theorem $H^{2,2}(S_{i,j})$ is generated by a "horizontal curve" $\alpha _{i,j}$  and a
"vertical curve" (or fiber) $\beta _{i,j}$. Here the properties of "horizontal curve" and "vertical curve" that we use are that $\pi _{*}(\alpha
_{i,j})=\widetilde{\Sigma}_{i,j}$ and $\pi _{*}(\beta _{i,j})=0$. Hence there are numbers $a_{i,j}$ and $b_{i,j}$ so that the cohomology class of $\tau
_{i,j}-a_{i,j}\alpha _{i,j}-b_{i,j}\beta _{i,j}$ is zero. For $(i,j)\not= (0,1)$, since $\pi _{*}(\tau _{i,j})=0$, it follows that
\begin{eqnarray*}
a_{i,j}\{\widetilde{\Sigma}_{i,j}\}=\pi _*\{a_{i,j}\alpha _{i,j}+b_{i,j}\beta _{i,j}\}=\pi _*\{\tau _{i,j}\}=\{\pi _*({\tau _{i,j}})\}=0.
\end{eqnarray*}
Hence $a_{i,j}=0$ for $(i,j)\not= (0,1)$.

Note that a non-zero $(2,2)$-cohomology class in $H^{2,2}(Y)$ represented by currents with supports in $S_{0,1}$ can not be represented by a linear
combinations of "vertical curves" with support in $\bigcup _{(i,j)\not= (0,1)}S_{i,j}$: Assume that
\begin{eqnarray*}
\{a_{0,1}\alpha _{0,1}+b_{0,1}\beta _{0,1}+\sum _{(i,j)\not= (0,1)}b_{i,j}\beta _{i,j}\}=0
\end{eqnarray*}
in $H^{2,2}(Y)$. Push-forward by the map $\pi$ implies that $a_{0,1}\{\widetilde{\Sigma}_{0,1}\}=0$ in $H^{2,2}(X)$, and hence $a_{0,1}=0$. Thus $\{\sum
b_{i,j}\beta _{i,j}\}=0$ in $H^{2,2}(Y)$. Use the fact that $\{S_{i,j}\}.\{\beta _{k,l}\}=-1$ if $(k,l)=(i,j)$, and $=0$ otherwise (see for example
Chapter 4 in \cite{griffiths-harris}), we imply that $b_{i,j}=0$ for all $(i,j)$ as claimed.

Hence it follows that $\{\tau _{i,j}\}=0$ in $H^{2,2}(Y)$ for $(i,j)\not= (0,1)$.

We have
\begin{eqnarray*}
\pi _*(J_Y^*(\tau ^+-\tau ^-))=\sum _{i,j}\pi _*(J_Y^*\tau _{i,j}),
\end{eqnarray*}
where support of $\pi _*(J_Y^*\tau _{i,j})$ is contained in $\widetilde{\Sigma _{3-i,3-j}}$. Here we use the convention that $\widetilde{\Sigma
_{k,l}}:=\widetilde{\Sigma _{l,k}}$ if $k>l$. Since $\pi _*(J_Y^*\tau _{i,j})$ is a normal $(2,2)$ current, it follows from the structure theorem for
normal currents that there is $\lambda _{i,j}\in \mathbb{R}$ so that $\pi _*(J_Y^*\tau _{i,j})=\lambda _{i,j}[\widetilde{\Sigma _{3-i,3-j}}]$. If
$(i,j)\not= (0,1)$ then $\{\tau _{i,j}\}=0$ in $H^{2,2}(Y)$, thus $\{\pi _*(J_Y^*\tau _{i,j})\}=0$ in $H^{2,2}(X)$, which implies $\lambda _{i,j}=0$ for
such $(i,j)$'s. Hence
\begin{eqnarray*}
\pi _*(J_Y^*(\tau ^+-\tau ^-))=\pi _*(J_Y^*\tau _{0,1})
\end{eqnarray*}
has support in $\widetilde{\Sigma _{2,3}}$ as wanted.
\end{proof}
\begin{proposition}
Let $X$ be the space constructed in Corollary \ref{CorollaryTheMapJ}. Let $\pi :Y\rightarrow X$ be the blowup of $X$ along all submanifolds
$\widetilde{\Sigma _{i,j}}$ ($1\leq i<j\leq 3$). Then there is a positive closed $(2,2)$-current $T$ on $X$ with $L^1$ coefficients so that: in
$H^{2,2}(Y),$

$$\{\pi ^o(T)\}\not= \pi ^*\{T\}.$$
Here the operator $\pi ^o$ is defined in Dinh and Nguyen \cite{dinh-nguyen}. In this case, in fact $\pi ^o(T)$ is also the operator defined in Dinh and
Sibony \cite{dinh-sibony2}.

\label{PropositionTheOperatorDotPullback}\end{proposition}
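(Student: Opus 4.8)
The plan is to exhibit an explicit positive closed $(2,2)$-current $T$ on $X$ with $L^1$ coefficients for which the operator $\pi^o$ of Dinh--Nguyen (equivalently, by Theorem~\ref{TheoremLocalPullbackForPositiveClosedCurrents}, the Dinh--Sibony operator $f^o$) produces a pull-back whose cohomology class differs from $\pi^*\{T\}$. The natural candidate is to take $T$ supported on, or containing a component equal to, one of the centers $\widetilde{\Sigma_{i,j}}$ of the blow-up $\pi:Y\to X$, since the obstruction in Example~2 is precisely that pulling back the current of integration along a blow-up center is ill-behaved. Concretely I would first choose $T$ to be a smoothing of $[\widetilde{\Sigma_{0,1}}]$ inside its cohomology class that still has $L^1$ coefficients and is continuous off $\widetilde{\Sigma_{0,1}}$; alternatively one writes $T=\theta+dd^cR$ with $\theta$ a smooth closed representative and $R$ a $DSH$ potential whose singular part along $\widetilde{\Sigma_{0,1}}$ forces the discrepancy. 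The point is that by construction $\pi^*\{T\}=\pi^*\{\widetilde{\Sigma_{0,1}}\}$, which by the computation in the proof of Lemma~\ref{LemmaOtherGoodPropertyOfTheMapJ} is represented by currents supported in $S_{0,1}$ and equals $\{a_{0,1}\alpha_{0,1}+b_{0,1}\beta_{0,1}\}$ in $H^{2,2}(Y)$ with $a_{0,1}\neq 0$.

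Next I would compute $\{\pi^o(T)\}$. The operator $\pi^o$, being defined by a local/diagonal approximation scheme, produces a \emph{positive} closed $(2,2)$-current on $Y$; moreover, because $\pi$ is a modification which is biholomorphic off $\bigcup_{i<j}\widetilde{\Sigma_{i,j}}$, the current $\pi^o(T)$ agrees with the honest analytic pull-back $\pi^{-1}(T)$ on $Y-\bigcup S_{i,j}$ and hence coincides there with the strict transform $\widetilde{T}$ of $T$. The difference $\pi^o(T)-\widetilde{T}$ is then a positive closed current supported in $\bigcup S_{i,j}$, so by the structure theorem it is a nonnegative combination of the generators $\alpha_{i,j},\beta_{i,j}$ of each $H^{2,2}(S_{i,j})\simeq H^{2,2}(\mathbb{P}^1\times\mathbb{P}^1)$. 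The key computation is that, because $\pi^o$ is built from local approximations that see only the local geometry of the blow-up of the curve $\widetilde{\Sigma_{0,1}}$ in the $3$-fold $X$, the class $\{\pi^o(T)\}$ is forced to be of the form $\pi^*\{T\}$ \emph{plus} a strictly positive multiple of the fiber class $\beta_{0,1}$ (the ``exceptional'' contribution of codimension-$2$ blow-up), exactly the excess-intersection term that the cohomological pull-back $\pi^*$ does not carry. Since $\{\beta_{0,1}\}\neq 0$ in $H^{2,2}(Y)$ (it pairs to $-1$ with $\{S_{0,1}\}$, as used in Lemma~\ref{LemmaOtherGoodPropertyOfTheMapJ}), this yields $\{\pi^o(T)\}\neq\pi^*\{T\}$.

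The main obstacle I anticipate is making the second step precise: one must actually identify the exceptional contribution of $\pi^o(T)$ along $S_{0,1}$ and check that it is nonzero for a suitable $L^1$ choice of $T$, rather than accidentally vanishing. This requires working in local coordinates for the blow-up of the smooth curve $\widetilde{\Sigma_{0,1}}\subset X$ (so $S_{0,1}$ is a $\mathbb{P}^1$-bundle over it, here $\simeq\mathbb{P}^1\times\mathbb{P}^1$), writing $T$ locally as a positive combination $\sum c_{I,J}(z)\,idz_I\wedge d\bar z_J$ with the $c_{I,J}\in L^1$ having a mild singularity along the curve, and computing $\lim_n \pi^*(K_n(T))$ by the regularization; the exceptional mass is governed by something like the residual Lelong-type number of $T$ along $\widetilde{\Sigma_{0,1}}$ weighted by the normal-bundle contribution. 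One then checks that even when $T$ has $L^1$ coefficients this residual term need not vanish --- e.g. by taking $T$ cohomologous to $[\widetilde{\Sigma_{0,1}}]$ but smooth, whose strict transform $\widetilde{T}$ is smooth and has class $\pi^*\{T\}-(\text{positive})\{\beta_{0,1}\}$, while $\pi^o$, enforcing positivity and agreeing with $\widetilde{T}$ off $S_{0,1}$, must restore exactly that positive $\beta_{0,1}$-term and hence lands in class $\pi^*\{T\}$ only if that term were zero, which it is not. Finally I would note that the identification $\pi^o(T)=f^o(T)$ of the Dinh--Nguyen and Dinh--Sibony operators in this situation follows from Theorem~\ref{TheoremLocalPullbackForPositiveClosedCurrents} together with the fact that both are defined by the same local diagonal regularization on the complement of the center.
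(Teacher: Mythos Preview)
Your approach is fundamentally different from the paper's and contains a genuine gap. The paper does \emph{not} construct an explicit $T$; it argues by contradiction. Assuming $\{\pi^o(T)\}=\pi^*\{T\}$ for every positive closed $(2,2)$ current $T$ with $L^1$ coefficients, one takes the Dinh--Sibony regularizations $T_n^+,T^-$ (positive closed with $L^1$ coefficients) satisfying $T_n^+\rightharpoonup T^-+[\widetilde{\Sigma_{0,1}}]$, and then uses the factorization $J_X^o=\pi_*J_Y^*\pi^o$ together with the hypothesis to deduce $\{J_X^o(T_n^+)\}=J_X^*\{T_n^+\}$ and $\{J_X^o(T^-)\}=J_X^*\{T^-\}$. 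Any cluster point $\tau^+$ of $J_X^o(T_n^+)$ satisfies $\tau^+\geq J_X^o(T^-)$ by positivity of $J_X^o$, yet in cohomology $\{\tau^+\}=\{J_X^o(T^-)\}-\{\widetilde{\Sigma_{2,3}}\}$ via Corollary~\ref{CorollaryTheMapJ}, a contradiction. The map $J_X$ and its negative-pullback phenomenon are the essential ingredients; you do not invoke $J_X$ at all.

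The specific gap in your proposal is the final candidate: you suggest taking $T$ smooth and cohomologous to $[\widetilde{\Sigma_{0,1}}]$, then distinguishing a ``strict transform $\widetilde T$'' from $\pi^o(T)$. But for a smooth form $T$ the operator $\pi^o$ is literally the ordinary pull-back of forms, so $\pi^o(T)=\pi^*T$ as currents and $\{\pi^o(T)\}=\pi^*\{T\}$ tautologically; there is no ``strict transform'' of a smooth form and no excess term to restore. Your earlier suggestion of an $L^1$ current with a mild singularity along $\widetilde{\Sigma_{0,1}}$ is more promising in principle, but the claimed ``residual Lelong-type number'' contribution is not computed, and for genuinely $L^1$ coefficients (hence zero Lelong number along the center) it is not at all clear that $\pi^o(T)$ acquires any mass on $S_{0,1}$; indeed the paper's indirect argument suggests that identifying the offending $T$ explicitly is delicate, which is why the contradiction route through $J_X$ is used instead.
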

\begin{proof}

We assume in order to reach a contradiction that for any positive closed $(2,2)$ current $T$ on $X$ with $L^1$-coefficients then $\{\pi
^o(T)\}=\pi^*\{T\}$ in $H^{2,2}(Y)$.

By regularization theorem of Dinh and Sibony, there is a sequence $T_n^+$ and $T^-$ of positive closed $(2,2)$ currents with $L^1$-coefficients such that
$||T_n^+||$ are uniformly bounded and $T_n^+\rightharpoonup T^-+ [\widetilde{\Sigma _{0,1}}]$. By the assumption we have $\{\pi
^o(T_n^+)\}=\pi^*\{T_n^+\}$ for any $n$, and $\{\pi ^o(T^-)\}=\pi^*\{T^-\}$. Now for the maps $J_X$ and $J_Y$ considered above, it is not hard to see
that $J_X^o=\pi _*J_Y^*\pi ^o$. Thus, we also have $\{J_X ^o(T_n^+)\}=J_X^*\{T_n^+\}$ and $\{J_X ^o(T^-)\}=J_X ^*\{T^-\}$.

Let $\tau ^+$ be a cluster point of $J_X ^o(T_n^+)$. Then it is easy to see that

$$\tau ^+\geq J_X^o(T^-+[\widetilde{\Sigma _{0,1}}])=J_X ^o(T^-)+J_X ^o([\widetilde{\Sigma _{0,1}}])=J_X ^o(T^-).$$
But then this contradicts the fact that in $H^{2,2}(X)$:
\begin{eqnarray*}
\{\tau ^+\}&=&\lim _{n\rightarrow \infty}\{J_X^o(T_n^+)\}=\lim _{n\rightarrow \infty}J_X^*\{(T_n^+)\}\\
&=&J_X^*\{T^-\}+J_X^*\{\widetilde{\Sigma _{0,1}}\}=\{J_X^o(T^-)\}-\{[\widetilde{\Sigma _{2,3}}]\},
\end{eqnarray*}
here we used the assumption that $J_X^*\{(T_n^+)\}=\{J_X^o(T_n^+)\}$ and $J_X^*\{(T^-)\}=\{J_X^o(T^-)\}$.
\end{proof}

\begin{proposition}
Let $X$ be the space constructed in Corollary \ref{CorollaryTheMapJ}. There is no sequence $T_n^+$ and $T^-$ of positive closed smooth $(2,2)$ forms on
$X$ such that

i) $||T_n^+||$ are uniformly bounded

ii) $T_n^+-T^- \rightharpoonup [\widetilde{\Sigma _{0,1}}]$. \label{PropositionNoVeryGoodApproximation}\end{proposition}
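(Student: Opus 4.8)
The plan is to argue by contradiction, combining Lemma \ref{LemmaOtherGoodPropertyOfTheMapJ} with the fact that a weak limit of positive currents is positive, and then contradicting the fact that the pullback of a smooth form cannot charge a proper subvariety. So suppose, for contradiction, that positive closed smooth $(2,2)$ forms $T_n^+$ and a positive closed smooth $(2,2)$ form $T^-$ exist with $\|T_n^+\|$ uniformly bounded and $T_n^+-T^-\rightharpoonup [\widetilde{\Sigma _{0,1}}]$.

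First I would apply Lemma \ref{LemmaOtherGoodPropertyOfTheMapJ} with the constant sequence $T_n^-:=T^-$; its hypotheses are exactly the assumptions above, since $\|T_n^-\|=\|T^-\|$ is trivially bounded. This yields $J_X^*(T_n^+)-J_X^*(T^-)=J_X^*(T_n^+-T^-)\rightharpoonup -[\widetilde{\Sigma _{2,3}}]$, that is, $J_X^*(T_n^+)\rightharpoonup J_X^*(T^-)-[\widetilde{\Sigma _{2,3}}]$. Each $J_X^*(T_n^+)=(\pi _1)_*(\pi _2^*(T_n^+)\wedge [\Gamma _{J_X}])$ is a positive current: $\pi _2^*(T_n^+)$ is a continuous positive $(2,2)$ form, $[\Gamma _{J_X}]$ is strongly positive, so by Lemma \ref{LemmaIntersectionOfPositiveContinuousForms} their wedge is a positive current, and a pushforward of a positive current is positive. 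Since a weak limit of positive currents is positive, it follows that $J_X^*(T^-)\geq [\widetilde{\Sigma _{2,3}}]$; in particular, comparing trace measures on the closed set $\widetilde{\Sigma _{2,3}}$, the current $J_X^*(T^-)$ must carry mass at least $\|[\widetilde{\Sigma _{2,3}}]\|>0$ on $\widetilde{\Sigma _{2,3}}$.

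To reach a contradiction I would then show that $J_X^*(T^-)$ carries no mass on $\widetilde{\Sigma _{2,3}}$. As noted in the proof of Lemma \ref{LemmaOtherGoodPropertyOfTheMapJ}, $J_X^*(T^-)=\pi _*(J_Y^*(\pi ^*T^-))$, where $\pi :Y\rightarrow X$ and $J_Y:Y\rightarrow Y$ are the blowup and the induced automorphism used there; here $\beta :=J_Y^*(\pi ^*T^-)$ is a smooth $(2,2)$ form on $Y$. For a cutoff $\chi _\epsilon$ on $X$, with $0\leq \chi _\epsilon\leq 1$, equal to $1$ near $\widetilde{\Sigma _{2,3}}$ and with supports shrinking to $\widetilde{\Sigma _{2,3}}$, one has $\langle \pi _*\beta ,\chi _\epsilon \omega _X\rangle =\int _Y(\chi _\epsilon\circ \pi )\,\beta \wedge \pi ^*\omega _X$, and since $\beta \wedge \pi ^*\omega _X$ is a smooth form of top degree on $Y$ while $\chi _\epsilon\circ \pi$ converges pointwise to the indicator of $\pi ^{-1}(\widetilde{\Sigma _{2,3}})$, a set of Lebesgue measure zero in $Y$, this integral tends to $0$. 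Hence $\|\pi _*\beta \|_{\widetilde{\Sigma _{2,3}}}=0$, contradicting the previous paragraph and finishing the proof.

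The main obstacle is this last step, verifying cleanly that $J_X^*(T^-)$ puts no mass on $\widetilde{\Sigma _{2,3}}$. The route above reduces it to the elementary fact that the pushforward of a smooth form under a modification has vanishing Lelong numbers; one could alternatively argue directly from the graph formula, using that $\pi _1^{-1}(\widetilde{\Sigma _{2,3}})\cap \Gamma _{J_X}=\widetilde{\Sigma _{2,3}}\times \widetilde{\Sigma _{0,1}}$ and that the $(2,2)$ form $\pi _2^*(T^-)$ restricts to zero along the one-dimensional fibres of $\pi _1|_{\Gamma _{J_X}}$ over $\widetilde{\Sigma _{2,3}}$, but that version requires a slicing argument.
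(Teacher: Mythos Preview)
Your argument is correct but follows a different path from the paper's. The paper refers back to the proof of Proposition~\ref{PropositionTheOperatorDotPullback} and reaches a contradiction \emph{cohomologically}: for smooth positive closed forms one has $J_X^*(T_n^{+})=J_X^{o}(T_n^{+})$, so any cluster point $\tau^+$ of $J_X^*(T_n^{+})$ satisfies $\tau^+\geq J_X^{o}(T^-)=J_X^*(T^-)$, while on cohomology $\{\tau^+\}=J_X^*\{T^-\}+J_X^*\{\widetilde{\Sigma_{0,1}}\}=\{J_X^*(T^-)\}-\{\widetilde{\Sigma_{2,3}}\}$; thus the positive closed current $\tau^+-J_X^*(T^-)$ has class $-\{\widetilde{\Sigma_{2,3}}\}$, which is impossible since its mass $-\{\widetilde{\Sigma_{2,3}}\}\cdot\{\omega_X\}$ would be negative. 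You instead derive the same inequality $J_X^*(T^-)\geq[\widetilde{\Sigma_{2,3}}]$ via Lemma~\ref{LemmaOtherGoodPropertyOfTheMapJ} and then contradict it \emph{measure-theoretically}, by observing that $J_X^*(T^-)=\pi_*(J_Y^*\pi^*T^-)$ is the pushforward under a modification of a smooth top-minus-one form and therefore carries no mass on the curve $\widetilde{\Sigma_{2,3}}$. The paper's route is shorter and uses only the intersection numbers already computed in Section~4; your route is more hands-on and has the merit of making explicit why smoothness of $T^-$ is the obstruction, though it requires the extra input of the resolution $Y$ and the dominated-convergence step.
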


\begin{remark}
In Example 6.3 of the paper \cite{bost-gillet-soule} of Bost, Gillet, and Soule, a related result was given.
\end{remark}
\begin{proof}
Use the same argument as that in the proof of Proposition \ref{PropositionTheOperatorDotPullback}, but now use that if $T_n^{\pm}$ are positive closed
smooth forms then $J_X^{*}(T_n^{\pm})=J_X^o(T_n^{\pm})$, and hence $\{J_X^o(T_n^{\pm})\}=J_X^*\{T_n^{\pm}\}$.
\end{proof}
\section{Invariant currents}
Throughout this section, we let $X$ be a compact K\"ahler manifold of dimension $k$, and let $f:X\rightarrow X$ be a dominant meromorphic map.

We introduce in the below a condition, called $dd^c$-$p$ stability. This condition seems to be natural for the problem of finding invariant $(p,p)$
currents for a self-map $f$ (see the discussions and the results after the definition).
\begin{definition}
We say that $f$ satisfies the $dd^c$-$p$ stability condition if the following holds: For any smooth $(p-1,p-1)$ form $\alpha$ and for any $n$,
$f^{\sharp}((f^n)^*dd^c \alpha )$ is well-defined, and moreover $f^{\sharp}((f^n)^*dd^c\alpha )=(f^{n+1})^*(dd^c\alpha )$.
 \label{DefinitionDdcPStability}\end{definition}

In general, condition of $dd^c$-$p$ stability has no relation with condition of $p$-algebraic stability. On the one hand, the $dd^c$-$p$ stability
condition requires no constraints on the action of $f^*$ on $H^{p,p}(X)$, because the cohomology class of $dd^c(\alpha )$ is zero. On the other hand, it
asks for the possibility of iterated pull-back $dd^c(\alpha )$ by $f$. Any map $f$ is $dd^c$-$1$ stable, whether being or not $1$-algebraic stable. If
$f$ is $p$-analytic stable then $f$ is $dd^c$-$p$ stable. Using the method in Step 1 of the proof of Lemma \ref{LemmaJXSquare}, it can be shown that the
linear pseudo-automorphisms in \cite{bedford-kim} are $dd^c$-$2$ stable. We suspect that these pseudo-automorphisms are also $2$-analytic stable even
though it seems not be easily checked.

We first introduce an abstract result on invariant $(p,p)$ currents.

\begin{theorem}
Assume that $f:X\rightarrow X$ satisfies the $dd^c$-$p$ condition. Let $\lambda$ be a real eigenvalue of $f^*:H^{p,p}(X)\rightarrow H^{p,p}(X)$, and let
$0\not= \theta _{\lambda}\in H^{p,p}(X)$ be an eigenvector with eigenvalue $\lambda$. Assume moreover that $|\lambda |>\delta _{p-1}(f)$ and let $s\geq
2$ be an integer. Then any of the following statements is equivalent to each other:

1) There is a closed $(p,p)$ current $T$ of order $s$ with $\{T\}=\theta _{\lambda }$ so that $f^{\sharp}(T)$ is well-defined, and moreover
$f^{\sharp}(T)=\lambda T$.

2) There are a smooth $(p-1,p-1)$ form $\alpha$ and a closed $(p,p)$ current $T$ of order $s$ with $\{T\}=\theta _{\lambda }$ so that $f^{\sharp}(T)$ is
well-defined, and moreover $f^{\sharp}(T)=\lambda T +\lambda dd^c(\alpha )$.

3) For any smooth $(p-1,p-1)$ form $\alpha$, there is a closed $(p,p)$ current $T$ of order $s$ with $\{T\}=\theta _{\lambda }$ so that $f^{\sharp}(T)$
is well-defined, and moreover $f^{\sharp}(T)=\lambda T +\lambda dd^c(\alpha )$.

4) There is a closed $(p,p)$ current $T$ of order $s$ with $\{T\}=\theta _{\lambda }$ so that $f^{\sharp}(T)$ is well-defined, and moreover
$f^{\sharp}(T)-\lambda T $ is a smooth form.

\label{TheoremInvariantCurrents}\end{theorem}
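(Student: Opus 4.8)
The plan is to prove the equivalence of the four statements by establishing a cycle of implications, using the $dd^c$-$p$ stability hypothesis to control the error terms coming from $dd^c$-exact currents and the condition $|\lambda| > \delta_{p-1}(f)$ to force convergence of a suitable series. The trivial implications are $1) \Rightarrow 2)$ (take $\alpha = 0$) and $3) \Rightarrow 2)$ (specialize to any fixed $\alpha$), and $1)\Rightarrow 4)$ (the difference is $0$, which is smooth). So the real content is in going from the weaker statements back to $1)$, and in producing $3)$ from $2)$.

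First I would show $2) \Rightarrow 1)$. Suppose $T$ is a closed $(p,p)$ current of order $s$ with $\{T\} = \theta_\lambda$ and $f^{\sharp}(T) = \lambda T + \lambda\, dd^c(\alpha)$ for some smooth $\alpha$. The idea is to correct $T$ by a $dd^c$-exact current so as to kill the error. Set $T' = T + dd^c(\beta)$ for a current $\beta$ to be found; then $\{T'\} = \theta_\lambda$ still, and using linearity of $f^{\sharp}$ (Lemma \ref{LemmaGoodPropetiesOfPullbackOperator}(ii)) together with $dd^c$-$p$ stability applied to the exact part, $f^{\sharp}(T') = \lambda T + \lambda\, dd^c(\alpha) + f^{\sharp}(dd^c\beta)$. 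We want this to equal $\lambda T' = \lambda T + \lambda\, dd^c(\beta)$, i.e. we need $f^{\sharp}(dd^c\beta) - \lambda\, dd^c(\beta) = -\lambda\, dd^c(\alpha)$. Writing this at the level of the exact currents and iterating, the natural candidate is the series
\begin{eqnarray*}
dd^c(\beta) = -\sum_{n=0}^{\infty} \lambda^{-n} (f^n)^{\sharp}(dd^c\alpha),
\end{eqnarray*}
where $(f^n)^{\sharp}(dd^c\alpha)$ makes sense and equals $(f^n)^*(dd^c\alpha)$ by repeated use of $dd^c$-$p$ stability (Definition \ref{DefinitionDdcPStability}). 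The convergence of this series is where $|\lambda| > \delta_{p-1}(f)$ enters: the mass of $(f^n)^*(dd^c\alpha)$ — equivalently the masses of the positive and negative parts of the $DSH$ current $(f^n)^*(dd^c\alpha)$, which are cohomologous to $(f^n)^*$ of a fixed positive class — grows like $\delta_{p-1}(f)^n$ up to subexponential factors, so $\sum \lambda^{-n}$ times that converges in the space of currents of bounded $DSH$ norm, in fact giving a current of order $s$. One then checks, applying $f^{\sharp}$ term by term and using that it commutes with the limit on this class of currents (the continuity/boundedness built into the good approximation schemes), that $T' = T + dd^c(\beta)$ satisfies $f^{\sharp}(T') = \lambda T'$, which is statement $1)$.

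Next, $2) \Rightarrow 3)$: given one $\alpha_0$ and $T_0$ with $f^{\sharp}(T_0) = \lambda T_0 + \lambda\, dd^c(\alpha_0)$, and an arbitrary smooth $\alpha$, I would first invoke $2)\Rightarrow 1)$ to get a genuine invariant current $T_{\mathrm{inv}}$ with $\{T_{\mathrm{inv}}\} = \theta_\lambda$ and $f^{\sharp}(T_{\mathrm{inv}}) = \lambda T_{\mathrm{inv}}$, and then set $T = T_{\mathrm{inv}} + dd^c(\gamma)$ where now I solve $f^{\sharp}(dd^c\gamma) - \lambda\, dd^c(\gamma) = \lambda\, dd^c(\alpha)$ by the analogous geometric series $dd^c(\gamma) = \sum_{n\geq 1}\lambda^{-n+1}(f^n)^*(dd^c\alpha)$... actually, more simply, $dd^c\gamma = -dd^c\alpha - \sum_{n\geq 1}\lambda^{-n}(f^n)^*(dd^c\alpha)$ arranged so that the pullback relation produces exactly the prescribed defect $\lambda\, dd^c(\alpha)$; the convergence is again guaranteed by $|\lambda| > \delta_{p-1}(f)$, and the resulting $T$ has the right cohomology class and order $s$. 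Finally $4) \Rightarrow 2)$ is immediate, since a smooth form $\eta = f^{\sharp}(T) - \lambda T$ can, by the $dd^c$-lemma, be written as $\eta = c\,\theta' + dd^c(\text{smooth})$ where $\theta'$ is closed smooth; but $\eta$ is $d$-exact in cohomology (as $\{f^{\sharp}(T)\} = f^*\{T\} = \lambda\{T\}$ by Lemma \ref{LemmaGoodPropetiesOfPullbackOperator}(iv), so $\{\eta\} = 0$), hence $\eta = \lambda\, dd^c(\alpha)$ for a smooth $\alpha$ (absorbing the factor $\lambda \neq 0$), which is statement $2)$.

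The main obstacle I anticipate is justifying the convergence of the geometric series and the term-by-term application of $f^{\sharp}$: one must check that $(f^n)^{\sharp}(dd^c\alpha)$ is genuinely well-defined for all $n$ (this is exactly what $dd^c$-$p$ stability buys us, so it should be cited cleanly rather than reproved), that the partial sums have uniformly bounded $DSH$ norm after multiplying by $\lambda^{-n}$ — which requires a careful mass estimate relating $\|(f^n)^*(dd^c\alpha)\|_{DSH}$ to $\delta_{p-1}(f)^n$ via the spectral radius of $f^*$ on $H^{p-1,p-1}$ and the $dd^c$-lemma bound on the $DSH$ norm of an exact current in terms of the masses of cohomologous positive closed forms (Lemma \ref{LemmaSmoothRepresentativeOfPositiveClosedCurrents} type estimates) — and that $f^{\sharp}$ is continuous along such $DSH$-bounded sequences so that $f^{\sharp}(\sum) = \sum f^{\sharp}$. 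The strict inequality $|\lambda| > \delta_{p-1}(f)$ is precisely what makes $\sum \lambda^{-n}\delta_{p-1}(f)^n$ (with a subexponential fudge factor) summable, and it is the crucial quantitative input; everything else is bookkeeping with the properties of good approximation schemes and the cohomological identities already established.
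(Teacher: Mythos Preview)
Your overall architecture is right and matches the paper: the easy implications are as you say, and the real content is a geometric-series correction $T'=T+dd^c(\beta)$, with $|\lambda|>\delta_{p-1}(f)$ forcing convergence. However there is a genuine gap in where you run the series and how you justify both convergence and the passage of $f^{\sharp}$ to the limit.

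You set up the series at the $(p,p)$ level,
\[
dd^c\beta \;=\; -\sum_{n\ge 0}\lambda^{-n}(f^n)^{\sharp}(dd^c\alpha),
\]
and claim the mass of $(f^n)^{*}(dd^c\alpha)$ grows like $\delta_{p-1}(f)^n$. That is not what controls a $(p,p)$ current: if you write $dd^c\alpha=\Omega^+-\Omega^-$ with $\Omega^\pm$ positive closed $(p,p)$ forms, then $\|(f^n)^{*}\Omega^\pm\|$ is governed by the action of $(f^n)^{*}$ on $H^{p,p}$, i.e.\ by $\delta_p(f)$, and there is no reason this is $<|\lambda|$. So the convergence argument as written does not go through. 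Moreover, even if the partial sums were $DSH$-bounded, the paper gives no continuity of $f^{\sharp}$ along arbitrary $DSH$-bounded sequences; the available tool is Theorem \ref{TheoremContinuityPropertiesOfPullbackOperator}, which needs a sandwich $-S_j\le T-T_j\le S_j$ by \emph{positive closed} currents with $\|S_j\|\to 0$.

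The paper's fix (Lemma \ref{LemmaSolvingTheDdcForPullback}) is to sum at the $(p-1,p-1)$ level: with $\beta=-\alpha$ one forms
\[
R_n=\sum_{j=0}^{n}\lambda^{-j}(f^j)^{*}\beta,\qquad R_\alpha=\lim_n R_n,
\]
and uses the pointwise bound $-A\omega_X^{p-1}\le \beta\le A\omega_X^{p-1}$ to get
\[
-S_n\le R_\alpha-R_n\le S_n,\qquad S_n=A\sum_{j>n}|\lambda|^{-j}(f^j)^{*}\omega_X^{p-1},
\]
where now $S_n$ is a positive closed $(p-1,p-1)$ current whose mass is controlled by $\sum_{j>n}|\lambda|^{-j}\|(f^j)^{*}\omega_X^{p-1}\|$, and \emph{this} is what grows like $\delta_{p-1}(f)^j$. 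The hypothesis $|\lambda|>\delta_{p-1}(f)$ then gives $\|S_n\|\to 0$, and Theorem \ref{TheoremContinuityPropertiesOfPullbackOperator} part 2) (applied to $dd^c$) yields that $f^{\sharp}(dd^c R_\alpha)$ is well-defined with $f^{\sharp}(dd^c R_\alpha)-\lambda\,dd^c R_\alpha=\lambda\,dd^c\alpha$. From there your implications $2)\Rightarrow 1)$, $1)\Rightarrow 3)$ and $4)\Leftrightarrow 2)$ go exactly as you wrote. So the repair is minor but essential: move the geometric series one bidegree down and invoke the sandwich continuity theorem rather than a generic ``$DSH$-bounded'' continuity that is not available.
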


Note that for the current $T$ in Theorem \ref{TheoremInvariantCurrents}, we do not know whether $(f^n)^{\sharp}(T)$ (for $n\geq 2$) is well-defined or
not. The proof of Theorem \ref{TheoremInvariantCurrents} makes use of the following result, which is interesting in itself.
\begin{theorem}
Let $T_j$ and $T$ be $(p,p)$ currents of order $s_0$. Assume that $-S_j\leq T-T_j\leq S_j$ for any $j$, where $S_j$ are positive closed $(p,p)$ currents
with $||S_j||\rightarrow 0$ as $j\rightarrow \infty$.

1) If $f^{\sharp}(T_j)$ is well-defined for any $j$ with the same number $s$ in Definition \ref{DefinitionPullbackDdcOfOrderSCurrents}, then
$f^{\sharp}(T)$ is well-defined. Moreover $f^{\sharp}(T_j)$ weakly converges to $f^{\sharp}(T)$.

2) If $f^{\sharp}(dd^cT_j)$ is well-defined for any $j$ with the same number $s$ in Definition \ref{DefinitionPullbackDdcOfOrderSCurrents}, then
$f^{\sharp}(dd^cT)$ is well-defined. Moreover $f^{\sharp}(dd^cT_j)$ weakly converges to $f^{\sharp}(dd^cT)$.
 \label{TheoremContinuityPropertiesOfPullbackOperator}\end{theorem}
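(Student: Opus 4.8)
The plan is to work directly from Definition \ref{DefinitionPullbackDdcOfOrderSCurrents}, testing against an arbitrary smooth form $\alpha$ on $X$ and an arbitrary good approximation scheme $K_n^{\pm}$ by $C^{s+2}$ forms. For part 1), fix such $\alpha$ and such a scheme, and set $g_n = K_n(f_*(\alpha))$. By hypothesis each $f^{\sharp}(T_j)$ is well-defined with the same $s$, so $\lim_n \int_Y T_j \wedge g_n = \int_X f^{\sharp}(T_j)\wedge\alpha$ exists for every $j$. The key estimate is to compare the $n$-th integrand for $T$ with that for $T_j$: since $-S_j \leq T - T_j \leq S_j$ with $S_j$ positive closed, and since (by the continuity property of good approximation schemes and the fact that $f_*(\alpha)$ is $DSH$) one can control $\int_Y S_j \wedge g_n$, I would bound $|\int_Y (T-T_j)\wedge g_n|$ by a constant depending on $\|S_j\|$ and on $\alpha$ but not on $n$. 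Concretely, write $g_n = g_n^+ - g_n^-$ with $g_n^{\pm} = K_n^{\pm}(f_*(\alpha))$; these need not be positive, but using Lemma \ref{LemmaBoundContinuousForms}-type bounds together with the uniform $DSH$-boundedness and the positivity/boundedness properties (items 1, 2 of Definition \ref{DefinitionGoodApproximation}) applied after decomposing $f_*(\alpha)$ into $DSH$ pieces, one gets $\pm g_n \leq C(\alpha)\,\omega_Y^{\dim Y - p}$ plus a $dd^c$-exact correction, uniformly in $n$. Pairing the $\omega$-part against the positive current $S_j$ gives $\leq C(\alpha)\|S_j\|$, and the $dd^c$-exact part is handled by moving $dd^c$ onto $S_j$, which vanishes since $S_j$ is closed (here is where closedness of $S_j$, not merely positivity, is essential, exactly as in part iv) of Lemma \ref{LemmaGoodPropetiesOfPullbackOperator}).

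With that uniform-in-$n$ bound in hand, the sequence $\bigl(\int_Y T\wedge g_n\bigr)_n$ is "uniformly close" to $\bigl(\int_Y T_j\wedge g_n\bigr)_n$, and the latter converges for each $j$. A standard $\varepsilon/3$ argument then shows $\bigl(\int_Y T\wedge g_n\bigr)_n$ is Cauchy: given $\varepsilon$, pick $j$ with $C(\alpha)\|S_j\| < \varepsilon/3$, then pick $N$ so that $|\int_Y T_j\wedge g_n - \int_Y T_j\wedge g_m| < \varepsilon/3$ for $n,m\geq N$. This produces a limit $L(\alpha)$; varying $\alpha$ gives a linear functional, and because the bound $|L(\alpha)| \leq$ (limit of $\int_Y T\wedge g_n$) is controlled via the $C^{s+2}$-norm of $\alpha$ pushed through $f_*$, $L$ defines a current $S$ on $X$ of the requisite order. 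Independence of the approximation scheme is immediate from the same estimate applied to the difference of two schemes (whose commutativity/additivity properties let one compare them as in the proof of Lemma \ref{LemmaNotAllCanBePullbacked} and Theorem \ref{TheoremInterestingExample1}). Finally, letting $j\to\infty$ in $|\int_X (f^{\sharp}(T) - f^{\sharp}(T_j))\wedge\alpha| \leq C(\alpha)\|S_j\| \to 0$ yields weak convergence $f^{\sharp}(T_j)\rightharpoonup f^{\sharp}(T)$.

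For part 2), I would apply part 1) after a reduction. Suppose $f^{\sharp}(dd^c T_j)$ is well-defined with common $s$. Testing $f^{\sharp}(dd^c T)$ against $\alpha$ means studying $\int_Y dd^c T \wedge K_n(f_*(\alpha)) = \int_Y T \wedge dd^c K_n(f_*(\alpha)) = \int_Y T\wedge K_n(dd^c f_*(\alpha)) = \int_Y T\wedge K_n(f_*(dd^c\alpha))$, using the compatibility-with-differentials property (item 7) and $dd^c f_* = f_* dd^c$. So the problem for $dd^c T$ tested against $\alpha$ is literally the problem for $T$ tested against the smooth form $dd^c\alpha$. The inequality $-S_j \leq T - T_j \leq S_j$ is unchanged, and $dd^c T_j$ being pullback-able against all $\alpha$ is the same as $T_j$ being pullback-able against all $dd^c\alpha$; running the argument of part 1) verbatim with $\alpha$ replaced by $dd^c\alpha$ throughout gives both well-definedness of $f^{\sharp}(dd^c T)$ and the weak convergence $f^{\sharp}(dd^c T_j)\rightharpoonup f^{\sharp}(dd^c T)$.

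The main obstacle I anticipate is the uniform-in-$n$ bound $|\int_Y (T-T_j)\wedge K_n(f_*(\alpha))| \leq C(\alpha)\|S_j\|$: the approximants $K_n(f_*(\alpha))$ are only $C^{s+2}$ and not positive, so one cannot pair them naively against the positive current $S_j$. The resolution is to split $f_*(\alpha)$ (a $DSH$ current) into its positive constituents, apply the positivity and $DSH$-boundedness of the scheme to those pieces to dominate $\pm K_n(f_*(\alpha))$ by $C(\alpha)\omega_Y^{\dim Y-p}$ modulo a $dd^c$-exact term with uniformly bounded potential, and then use that $S_j$ is closed to kill the exact term and positive to make the $\omega$-domination yield the mass bound $C(\alpha)\|S_j\|$. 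Everything else is the $\varepsilon/3$ bookkeeping already familiar from the earlier proofs in Section 3.
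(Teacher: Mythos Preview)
Your overall architecture matches the paper's proof exactly: fix $\alpha$ and a good approximation scheme, obtain a bound $\bigl|\int_Y (T-T_j)\wedge K_n(f_*(\alpha))\bigr|\leq C(\alpha)\|S_j\|$ uniform in $n$, run the $\varepsilon/3$ argument to get a limit $L(\alpha)$, check it defines a current, and deduce weak convergence. Your reduction of part 2) to part 1) via $K_n\circ dd^c=dd^c\circ K_n$ and $f_*\circ dd^c=dd^c\circ f_*$ is also exactly what the paper does.

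The gap is in how you propose to obtain the uniform-in-$n$ bound. You want to ``dominate $\pm K_n(f_*(\alpha))$ by $C(\alpha)\,\omega_Y^{\dim Y-p}$ modulo a $dd^c$-exact term'' after decomposing $f_*(\alpha)$ into positive $DSH$ pieces. This does not follow from items 1--2 of Definition \ref{DefinitionGoodApproximation}: those give that $K_n^{\pm}$ of a positive current is positive with uniformly bounded \emph{mass}, not with uniformly bounded $L^{\infty}$ norm, and mass control alone does not yield a pointwise inequality against $\omega_Y^{\dim Y-p}$ (even up to a $dd^c$-exact correction, since the pieces are not closed). So the step where you pair the ``$\omega$-part'' against $S_j$ to get $C(\alpha)\|S_j\|$ is unjustified as written.

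The paper closes this gap by applying the $L^{\infty}$ bound \emph{upstream}, on $X$, before anything is pushed forward or regularized. One first reduces to $\alpha$ strongly positive (the general case follows by writing $\alpha$ as a difference of two such, via Lemma \ref{LemmaBoundContinuousForms}). Then $f_*(\alpha)$ is strongly positive, so by item 2 each $K_n^{\pm}(f_*(\alpha))$ is a positive $C^{s+2}$ form; sandwiching $-S_j\leq T-T_j\leq S_j$ against these positive forms gives $\bigl|\int (T-T_j)\wedge K_n^{\pm}(f_*(\alpha))\bigr|\leq \int S_j\wedge K_n^{\pm}(f_*(\alpha))$. Now Lemma \ref{LemmaBoundContinuousForms} gives $\alpha\leq A\|\alpha\|_{L^{\infty}}\omega_X^{k-p}$, and since $f_*$ and $K_n^{\pm}$ preserve positivity, this inequality survives to give $K_n^{\pm}(f_*(\alpha))\leq A\|\alpha\|_{L^{\infty}}K_n^{\pm}(f_*(\omega_X^{k-p}))$. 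The point is that $f_*(\omega_X^{k-p})$ is positive \emph{closed}, so by item 3 the form $K_n^{\pm}(f_*(\omega_X^{k-p}))$ is positive closed with mass bounded independently of $n$; the pairing $\int S_j\wedge K_n^{\pm}(f_*(\omega_X^{k-p}))$ is therefore cohomological and bounded by $C\|S_j\|$. This is where closedness of $S_j$ enters --- not to kill a $dd^c$-exact term as you suggest, but to make the final integral a cohomological pairing of two closed currents.
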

Note that when $p=0$, a closed $(0,0)$ current on $X$ is a constant, hence the $S_j$ in Theorem \ref{TheoremContinuityPropertiesOfPullbackOperator} are
positive constants converging to zero.

\begin{proof} (Of Theorem \ref{TheoremContinuityPropertiesOfPullbackOperator})

i) Let $K_n=K^{+}_{n}-K^-_n$ be a good approximation scheme by $C^{s+2}$ forms. Let $\alpha$ be a strongly positive smooth $(k-p,k-p)$ form on $X$. then
$f_*(\alpha)$ is a strongly positive form. Therefore $K_{n}^{\pm}f_*(\alpha )$ are strongly positive forms of class $C^2$. Since $-S_j\leq T_j-T\leq
S_j$, by Lemma \ref{LemmaIntersectionOfPositiveContinuousForms} we obtain
\begin{eqnarray*}
-\int _{X}S_j\wedge K_{n}^{\pm}f_*(\alpha )\leq \int _{X}(T_j-T)\wedge K_{n}^{\pm}f_*(\alpha )\leq \int _{X}S_j\wedge
K_{n}^{\pm}f_*(\alpha ).
\end{eqnarray*}
From Lemma \ref{LemmaBoundContinuousForms}, there is a constant $A>0$ independent of $\alpha$ so that $A||\alpha ||_{L^{\infty}}\omega _X^{k-p}\pm
\alpha$ are strongly positive forms. Then $A||\alpha ||_{L^{\infty}}f_*(\omega _X^{k-p})\pm f_*(\alpha )$ are strongly positive forms on $X$. Hence we
have
\begin{eqnarray*}
\int _{X}S_j\wedge K_{n}^{\pm}f_*(\alpha )\leq A||\alpha ||_{L^{\infty}}\int _{X}S_j\wedge K_{n}^{\pm}f_*(\omega _X^{k-p}).
\end{eqnarray*}
The latter integral can be computed cohomologously, hence can be bound as
\begin{eqnarray*}
A||\alpha ||_{L^{\infty}}\int _{X}S_j\wedge K_{n}^{\pm}f_*(\omega _X^{k-p})&\leq& A||\alpha ||_{L^{\infty}}||S_j||\times
||K_{n}^{\pm}f_*(\omega _X^{k-p})||\\
&\leq&A||\alpha ||_{L^{\infty}}||S_j||\times ||f_*(\omega _X^{k-p})||.
\end{eqnarray*}
The latter inequality comes from Theorem \ref{TheoremApproximationOfDinhAndSibony}.  Hence,
\begin{equation}
-A||\alpha ||_{L^{\infty}}||S_j||\leq \int _{X}(T_j-T)\wedge K_{n}^{\pm}f_*(\alpha )\leq A||\alpha ||_{L^{\infty}}||S_j||.
\label{EquationTheoremContinuityProperties.1}
\end{equation}
Since $f^{\sharp}(T_j)$ are well-defined for all $j$, if we take limit as $n\rightarrow \infty$ in (\ref{EquationTheoremContinuityProperties.1}), we get
\begin{eqnarray*}
-A||\alpha ||_{L^{\infty}}||S_j||&\leq& \int _{X}f^{\sharp}(T_j)\wedge \alpha-\limsup_{n\rightarrow\infty}\int _{X}T\wedge
K_{n}f_*(\alpha )\\
&\leq&\int _{X}f^{\sharp}(T_j)\wedge \alpha-\liminf_{n\rightarrow\infty}\int _{X}T\wedge
K_{n}f_*(\alpha )\\
&\leq& A||\alpha ||_{L^{\infty}}||S_j||.
\end{eqnarray*}
Since $||S_j||\rightarrow 0$, taking limit as $j\rightarrow \infty$ shows that
\begin{eqnarray*}
L(\alpha ):=\lim_{n\rightarrow\infty}\int _{X}T\wedge K_{n}f_*(\alpha )
\end{eqnarray*}
exists, and moreover it satisfies
\begin{equation}
-A||\alpha ||_{L^{\infty}}||S_j||\leq \int _Xf^{\sharp}(T_j)\wedge \alpha -L(\alpha )\leq A||\alpha
||_{L^{\infty}}||S_j||,\label{EquationTheoremContinuityProperties.2}
\end{equation}
for all $j$, and all strongly positive smooth $(dim(X)-p,dim(X)-p)$ form $\alpha$. Since any smooth $(dim (X)-p,dim (X)-p)$ form $\alpha$ is the
difference of two strongly positive smooth $(dim(X)-p,dim(X)-p)$ forms $\alpha _1$ and $\alpha _2$ whose $L^{\infty}$ norms are uniformly bounded (up to
a multiplicative constant) by $||\alpha ||_{L^{\infty}}$ by Lemma \ref{LemmaBoundContinuousForms}, it follows that
(\ref{EquationTheoremContinuityProperties.2}) holds for any smooth form $\alpha$. From this, it follows easily that the assignment $\alpha \mapsto
L(\alpha )$ is a well-defined functional on smooth forms $\alpha$. Now we show that it is a current on $X$. For this end, it suffices to show that if
$\alpha _n$ are smooth forms so that $||\alpha _n||_{C^s}\rightarrow 0$ for any fixed $s\geq 0$ then $L(\alpha _n)\rightarrow 0$. This follows easily
from (\ref{EquationTheoremContinuityProperties.2}) by first taking limit when $n\rightarrow \infty$ and then taking limit when $j\rightarrow \infty$,
using the assumptions that $f^{\sharp}(T_j)$ are currents, hence
\begin{eqnarray*}
\lim _{n\rightarrow \infty}\int _Xf^{\sharp}(T_j)\wedge \alpha _n=0,
\end{eqnarray*}
for any $j$.

ii) The proof is similar to the proof of i), with a small change: The estimate (\ref{EquationTheoremContinuityProperties.1}) is modified to
\begin{eqnarray*}
-A||dd^c\alpha ||_{L^{\infty}}||S_j||\leq \int _{X}(T_j-T)\wedge K_{n}^{\pm}f_*(dd^c\alpha )\leq A||dd^c\alpha ||_{L^{\infty}}||S_j||.
\end{eqnarray*}
\end{proof}
The proof of Theorem \ref{TheoremInvariantCurrents} also uses the following result:
\begin{lemma}
Assume that $f$ satisfies the $dd^c$-$p$ stability condition. Let $\lambda$ be a positive real number. If $|\lambda |>\delta _{p-1}(f)$, then for any
smooth $(p-1,p-1)$ form $\alpha$, there is a current $R_{\alpha}$ of order $0$, so that $f^{\sharp}(dd^cR_{\alpha })$ is well-defined, and moreover
\begin{eqnarray*}
f^{\sharp}(dd^cR_{\alpha})-\lambda dd^cR_{\alpha }=\lambda dd^c\alpha .
\end{eqnarray*}
\label{LemmaSolvingTheDdcForPullback}\end{lemma}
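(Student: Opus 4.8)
The plan is to write $R_\alpha$ explicitly as a geometric series in the iterated pull-backs $(f^n)^{*}$ and then push $f^{\sharp}$ through the series using the continuity statement of Theorem \ref{TheoremContinuityPropertiesOfPullbackOperator}. Since $\alpha$ is smooth, by Lemma \ref{LemmaBoundContinuousForms} we may write $\alpha=\alpha_1-\alpha_2$ with $\alpha_i$ strongly positive smooth $(p-1,p-1)$ forms satisfying $\alpha_i\le C\,\omega_X^{p-1}$ for some $C>0$. For each $n$ the current $(f^n)^{*}(\alpha)=(\pi_1)_{*}(\pi_2^{*}(\alpha)\wedge[\Gamma_{f^n}])$ is a difference of two positive currents of order $0$ (Lemma \ref{LemmaIntersectionOfPositiveContinuousForms}), and monotonicity of the geometric pull-back on positive forms gives $0\le (f^n)^{*}(\alpha_i)\le C\,(f^n)^{*}(\omega_X^{p-1})$. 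Because $\lambda=|\lambda|>\delta_{p-1}(f)$, the masses $\|(f^n)^{*}(\omega_X^{p-1})\|$ grow at most like $(\delta_{p-1}(f)+\epsilon)^n$ for small $\epsilon>0$ (the standard sub-multiplicativity of the operators $(f^n)^{*}$, cf.\ the discussion after Lemma \ref{LemmaInvariantCurrentForKahlerClass}), so $\sum_{n\ge 0}\lambda^{-n}(f^n)^{*}(\alpha_i)$ converges in mass. Hence
\begin{eqnarray*}
R_\alpha:=-\sum_{n\ge 0}\lambda^{-n}(f^n)^{*}(\alpha)
\end{eqnarray*}
is a well-defined current of order $0$ (a difference of two positive currents), and, taking $dd^c$ termwise, $dd^cR_\alpha=-\sum_{n\ge 0}\lambda^{-n}(f^n)^{*}(dd^c\alpha)$.

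Next I would verify that $f^{\sharp}(dd^cR_\alpha)$ is well-defined by applying part 2) of Theorem \ref{TheoremContinuityPropertiesOfPullbackOperator} to the partial sums $T_j:=-\sum_{n=0}^{j}\lambda^{-n}(f^n)^{*}(\alpha)$. These are of order $0$, and from $0\le(f^n)^{*}(\alpha_i)\le C(f^n)^{*}(\omega_X^{p-1})$ one obtains $-S_j\le R_\alpha-T_j\le S_j$ with
\begin{eqnarray*}
S_j:=C\sum_{n>j}\lambda^{-n}(f^n)^{*}(\omega_X^{p-1}),
\end{eqnarray*}
which is a positive closed $(p-1,p-1)$ current with $\|S_j\|\to 0$ as $j\to\infty$. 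By the $dd^c$-$p$ stability hypothesis, $f^{\sharp}\big((f^n)^{*}(dd^c\alpha)\big)$ is well-defined (with the integer $s=0$ in Definition \ref{DefinitionPullbackDdcOfOrderSCurrents}) and equals $(f^{n+1})^{*}(dd^c\alpha)$; since $dd^cT_j$ is a \emph{finite} linear combination of the currents $(f^n)^{*}(dd^c\alpha)$, Lemma \ref{LemmaGoodPropetiesOfPullbackOperator}(ii) shows $f^{\sharp}(dd^cT_j)$ is well-defined (again with $s=0$) and $f^{\sharp}(dd^cT_j)=-\sum_{n=0}^{j}\lambda^{-n}(f^{n+1})^{*}(dd^c\alpha)$. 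Theorem \ref{TheoremContinuityPropertiesOfPullbackOperator}(2) then gives that $f^{\sharp}(dd^cR_\alpha)$ is well-defined and $f^{\sharp}(dd^cT_j)\rightharpoonup f^{\sharp}(dd^cR_\alpha)$.

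It remains to identify the limit. Re-indexing, $f^{\sharp}(dd^cT_j)=-\lambda\sum_{m=1}^{j+1}\lambda^{-m}(f^m)^{*}(dd^c\alpha)$, and letting $j\to\infty$ this tends to $-\lambda\big(\sum_{m\ge 0}\lambda^{-m}(f^m)^{*}(dd^c\alpha)-dd^c\alpha\big)=\lambda\,dd^cR_\alpha+\lambda\,dd^c\alpha$, where we used $\sum_{m\ge 0}\lambda^{-m}(f^m)^{*}(dd^c\alpha)=-dd^cR_\alpha$. Hence $f^{\sharp}(dd^cR_\alpha)-\lambda\,dd^cR_\alpha=\lambda\,dd^c\alpha$, as desired. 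The heart of the argument is the passage of $f^{\sharp}$ through the infinite series: this is precisely what forces us to produce the positive closed comparison currents $S_j$ with $\|S_j\|\to 0$, and it is the only place where the spectral gap $\lambda>\delta_{p-1}(f)$ enters, both for the convergence of $R_\alpha$ and for $\|S_j\|\to 0$. The routine point to be careful about is that all the finite-level pull-backs $f^{\sharp}(dd^cT_j)$ are well-defined with one and the same integer $s$ (here $s=0$), as Theorem \ref{TheoremContinuityPropertiesOfPullbackOperator} requires; this is automatic since each $(f^n)^{*}(dd^c\alpha)$ is of order $0$ and the $dd^c$-$p$ stability hypothesis supplies its pull-back at the level $s=0$.
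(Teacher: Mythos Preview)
Your proof is correct and follows essentially the same route as the paper: you build $R_\alpha$ as the geometric series $-\sum_{n\ge 0}\lambda^{-n}(f^n)^*(\alpha)$, bound the tails by the positive closed currents $S_j=C\sum_{n>j}\lambda^{-n}(f^n)^*(\omega_X^{p-1})$ with $\|S_j\|\to 0$, invoke $dd^c$-$p$ stability on the partial sums, and pass to the limit via Theorem \ref{TheoremContinuityPropertiesOfPullbackOperator}(2). One small over-claim: the $dd^c$-$p$ stability hypothesis as stated does not literally guarantee that the pull-backs are well-defined at the specific level $s=0$, only at \emph{some} $s$; the paper's proof is equally silent on this uniformity, so you are not worse off, but your parenthetical justification for $s=0$ is not quite supported by Definition \ref{DefinitionDdcPStability} as written.
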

\begin{proof}
Define $\beta =-\alpha$, and consider
\begin{eqnarray*}
R_n=\sum _{j=0}^n\frac{(f^j)^*(\beta )}{\lambda ^j}.
\end{eqnarray*}
Since $\beta $ is a smooth $(p-1,p-1)$ form, there is a constant $A>0$ so that $-A\omega _X^{p-1}\leq \beta \leq A\omega _X^{p-1}$. It follows that
\begin{eqnarray*}
R_{\alpha}=\sum _{j=0}^{\infty}\frac{(f^j)^*(\beta )}{\lambda ^j}
\end{eqnarray*}
is a well-defined current which is a difference of two positive currents, hence of order $0$. Moreover $-S_n\leq R_n-R\leq S_n$, where
\begin{eqnarray*}
S_n=A\sum _{j=n+1}^{\infty}\frac{(f^j)^*(\omega _X^{p-1} )}{|\lambda |^j}.
\end{eqnarray*}
The $S_n$ are well-defined positive closed $(p-1,p-1)$ currents, because it is well-known (see for example Chapter 2 in \cite{guedj}) that
\begin{eqnarray*}
\lim _{n\rightarrow\infty}||(f^n)^*(\omega _X^{p-1} )||^{1/n}=\delta _{p-1}(f),
\end{eqnarray*}
and the latter is $< |\lambda |$ by assumption. The above inequality also shows that $||S_n||\rightarrow 0$ as $n\rightarrow \infty$. The $dd^c$-$p$
stability condition shows that $f^{\sharp}(dd^cR_n)$ is well-defined for any $n$, and moreover $f^{\sharp}(dd^cR_n)-\lambda dd^cR_{n+1}=-\lambda
dd^c\beta =\lambda dd^c\alpha $. Applying Theorem \ref{TheoremContinuityPropertiesOfPullbackOperator}, using that $R_n$ weakly converges to $R_{\alpha}$,
we have
\begin{eqnarray*}
f^{\sharp}(dd^cR_{\alpha})-\lambda dd^cR_{\alpha }=\lambda dd^c\alpha .
\end{eqnarray*}
\end{proof}
\begin{proof} (Of Theorem \ref{TheoremInvariantCurrents})

All of the equivalences follow easily from Lemma \ref{LemmaSolvingTheDdcForPullback}.

$1)\Rightarrow 3)$: Let $T_0$ be a closed $(p,p)$ current of order $s$ with $\{T_0\}=\theta _{\lambda}$ so that $f^{\sharp}(T_0)$ is well-defined, and
$f^{\sharp}(T_0)-\lambda T_0=0$. For any smooth $(p-1,p-1)$ form $\alpha$ on $X$, let $R_{\alpha}$ be the current constructed in Lemma
\ref{LemmaSolvingTheDdcForPullback}. Then $T=T_0+dd^c(R_{\alpha })$ is a closed $(p,p)$ current of order $s$ with $\{T\}=\theta _{\lambda}$ so that
$f^{\sharp}(T)$ is well-defined, and $f^{\sharp}(T)-\lambda T=dd^c(R_{\alpha})$.

$3)\Rightarrow 2$: Obviously.

$2)\Rightarrow 1)$: Let $\alpha _0$ be a smooth $(p-1,p-1)$ form, and let $T_0$ be a closed $(p,p)$ current of order $s$ with $\{T_0\}=\theta _{\lambda}$
so that $f^{\sharp}(T_0)$ is well-defined, and $f^{\sharp}(T_0)-\lambda T_0=dd^c(\alpha _0)$. Let $R_{\alpha}$ be the current constructed in Lemma
\ref{LemmaSolvingTheDdcForPullback}. Then $T=T_0-dd^c(R_{\alpha })$ is a closed $(p,p)$ current of order $s$ with $\{T\}=\theta _{\lambda}$ so that
$f^{\sharp}(T)$ is well-defined, and $f^{\sharp}(T)-\lambda T=0$.

Finally, that 2) and 4) are equivalent follows from the $dd^c$ lemma, since the current $f^{\sharp}(T)-\lambda T$ is a smooth form cohomologous to $0$.
\end{proof}
Now we give the proofs of Lemma \ref{LemmaInvariantCurrentForKahlerClass} and Corollaries \ref{CorollaryInvariantCurrentsForNonAlgebraicStability},
\ref{TheoremInvariantMeasureForLargeTopologicalDegree} and \ref{TheoremInvariantCurrentsForHyperpolicCohomologyHolomorphicMaps}.

\begin{proof} (Of Lemma \ref{LemmaInvariantCurrentForKahlerClass})
Since $\pi _1(\mathcal{C}_f)$ has codimension $\geq p$, it follows from Theorem \ref{TheoremInterestingExample1} any positive closed $(p,p)$ current can
be pulled back, and the pullback operator is continuous with respect to the weak topology on positive closed $(p,p)$ currents. We can represent $\theta$
by a difference $\alpha =\alpha ^+-\alpha ^-$ of two positive closed smooth $(p,p)$ forms $\alpha ^{\pm}$. Since $f$ is $p$-analytic stable, it follows
that $(f^n)^{*}(\alpha ^{\pm})=(f^{\sharp})^n(\alpha ^{\pm})$ are positive closed $(p,p)$ currents for any $n\geq 1$. Moreover there is a constant
$C_1>0$ so that $||(f^{\sharp})^n(\alpha ^{\pm})|| =||(f^n)^*(\alpha ^{\pm})||\leq C_1 r_p(f)^n=C_1\lambda ^n$ (see e.g \cite{guedj}). We follow the
standard construction of an invariant current under these assumptions (see \cite{sibony} and \cite{buff}). Consider the currents $T_N=T_N^+-T_N^-$, where
\begin{eqnarray*}
T_N^{\pm}=\frac{1}{N}\sum _{j=0}^{N-1}\frac{(f^{\sharp})^j(\alpha ^{\pm})}{\lambda ^j}.
\end{eqnarray*}

Then $T_N^{\pm}$ are positive closed $(p,p)$ currents with uniformly bounded masses, thus after passing to a subsequence, we may assume that they
converge to $T^{\pm}$. We define $T=T^+-T^-$. Since $\{T_N\}=\{\alpha\}$ for any $N$, we also have $\{T\}=\{\alpha\}$. Since
$f^{\sharp}(T_N^{\pm})-\lambda T_N^{\pm}$ converges to $0$, it follows that $f^{\sharp}(T)=\lambda T$.
\end{proof}

\begin{proof} (Of Corollary \ref{CorollaryInvariantCurrentsForNonAlgebraicStability})

Let $T_1$ and $T_2$ be the Green $(1,1)$ currents for the maps $f_1$ and $f_2$ as constructed in Sibony \cite{sibony}, respectively. Then we can write
\begin{eqnarray*}
T_i=\sum _{j}\lambda _{j,i}[V_{j,i}]
\end{eqnarray*}
for $i=1,2$, where $\lambda _{j,i}>0$ and $V_{j,i}$ are irreducible hypersurfaces in $\mathbb{P}^{k_i}$. Moreover $f^*(T_1)=d_1T_1$ and
$f^*(T_2)=d_2T_2$. We choose $T=T_1\times T_2$. Consider the finite summands
\begin{eqnarray*}
S_{N,i}=\sum _{j=0}^N\lambda _{j,i}[V_{j,i}].
\end{eqnarray*}
Then $f^{-1}(S_{N,1}\times S_{N,2})=f_1^{-1}(S_{N,1})\times f_2^{-1}(S_{N,2})$ has codimension $2$ in $\mathbb{P}^{k_1}\times \mathbb{P}^{k_2}$, thus
$f^{\sharp}(S_{N,1}\times S_{N,2})$ are well-defined by Corollary \ref{CorollaryPullBackVariety}. Since $T_1\times T_2-S_{N,1}\times S_{N,2}$ are
positive closed currents decreasing to $0$, it follows by Theorem \ref{TheoremContinuityPropertiesOfPullbackOperator} that $f^{\sharp}(T_1\times T_2)$ is
well-defined and moreover
$$f^{\sharp}(T_1\times T_2)=\lim _{N\rightarrow\infty}f^{\sharp}(S_{N,1}\times S_{N,2}).$$
It remains to show that $f^{\sharp}(T_1\times T_2)=d_1d_2T_1\times T_2$. To this end, first we show that $f^{\sharp}(S_{N,1}\times
S_{N,2})=f_1^{*}(S_{N,1})\times f_2^*(S_{N,2})$ for any $N$. By the results in \cite{dinh-sibony4} (see also the last section), there are positive closed
$(1,1)$ currents $W_{j,N,1}$ on $\mathbb{P}^{k_1}$ and $W_{j,N,2}$ on $\mathbb{P}^{k_2}$ with uniformly bounded norms so that $S_{N,1}=\lim
_{j\rightarrow\infty}W_{j,N,1}$ and $S_{N,2}=\lim _{j\rightarrow\infty}W_{j,N,2}$. Moreover, we can choose these approximations in such a way that
support of $W_{j,N,1}$ converges to $S_{N,1}$ and support of $W_{j,N,2}$ converges to $S_{N,2}$. Then $\lim _{j\rightarrow\infty}W_{j,N,1}\times
W_{j,N,2}=S_{N,1}\times S_{N,2}$, and $W_{j,N,1}\times W_{j,N,2}$ has uniformly bounded mass and locally uniformly converges to $0$ on
$\mathbb{P}^{k_1}\times \mathbb{P}^{k_2}-S_{N,1}\times S_{N,2}$. Hence we can apply Theorem \ref{TheoremInterestingExample} to obtain that
\begin{eqnarray*}
f^{\sharp}(S_{N,1}\times S_{N,2})&=&\lim _{j\rightarrow\infty}f^*(W_{j,N,1}\times W_{j,N,2})=\lim _{j\rightarrow\infty}f_1^*(W_{j,N,1})\times
f_2^*(W_{j,N,2})\\
&=&f_1^*(S_{N,1})\times f_2^*(S_{N,2}).
\end{eqnarray*}
Having this, it follows from the continuity of pullback on positive closed $(1,1)$ currents and the definitions of $T_1$ and $T_2$ that
\begin{eqnarray*}
f^{\sharp}(T_1\times T_2)&=&\lim _{N\rightarrow\infty}f^{\sharp}(S_{N,1}\times S_{N,2})=\lim _{N\rightarrow\infty}f_1^*(S_{N,1})\times f_2^*(S_{N,2})\\
&=&f^*(T_1)\times f^*(T_2)=d_1d_2T_1\times T_2.
\end{eqnarray*}
\end{proof}

\begin{proof} (Of Corollary \ref{TheoremInvariantMeasureForLargeTopologicalDegree})

It is well-known that for any smooth $(k,k)$ form $\theta$ then $(f^n)^{\sharp}(\theta )=(f^{\sharp})^n(\theta )$ (see for example \cite{guedj1}). Hence
$f$ satisfies $dd^c$-$k$ stability condition. As in \cite{guedj1}, we can find a smooth probability measure $\theta$ so that $f^*(\theta )$ is again a
smooth probability measure. Hence $f^*(\theta )-\delta _k(f)\theta =dd^c(\varphi )$, where $\varphi$ is a smooth $(p-1,p-1)$ form. Hence we can apply
Theorem \ref{TheoremInvariantCurrents}.
\end{proof}
\begin{proof} (Of Corollary \ref{TheoremInvariantCurrentsForHyperpolicCohomologyHolomorphicMaps})

Let $\theta$ be a smooth form then $f^*(\theta )$ is again a smooth form since $f$ is holomorphic. Then we can use the same arguments as that in the
proof of Corollary \ref{TheoremInvariantMeasureForLargeTopologicalDegree}.
\end{proof}

\section{Examples of good approximation schemes, and open questions}
We give some examples of good approximation schemes in Definition \ref{DefinitionGoodApproximation} in the first two subsections, and then discuss some open problems in the last subsection.
\subsection{The case of general K\"ahler manifolds}

Let $Z$ be a compact K\"ahler manifold of dimension $k$. Let $\pi _1,\pi _2:Z\times Z\rightarrow Z$ be the two projections, and let $\Delta _Z\subset Z\times Z$ be the diagonal.  Our construction of examples use the following regularization theorem of $DSH$ currents  in \cite{dinh-sibony1}.
\begin{theorem}
There is a sequence of strongly positive closed $(k,k)$ forms $K_n^{\pm}$ on $Z\times Z$ of $L^1$ coefficients with the following properties:

i) $K_n^+-K_n^-\rightharpoonup [\Delta _Z]$, and $||K_n^{\pm}||$ are uniformly bounded. The singularities of $K_n^{\pm}$ are the same as that of the
Bochner-Martinelli kernel.

ii) Support of $K_n^{+}-K_n^{-}$ converges to $\Delta _Z$. By this we mean, for any open neighborhood $U$ of $\Delta _Z$, there exists $n_0$ so that if
$n\geq n_0$ then support of $K_n^{+}-K_n^{-}$ is contained in $U$.

iii) If $T$ is a $DSH^p$ current then $(K_n^+-K_n^-)\wedge \pi _2^*(T)\rightharpoonup [\Delta _Z]\wedge \pi _2^*(T)$. Moreover, if $T_j$ converges to $T$
in $DSH^p(Z)$, then for a given number $n$: $(\pi _1)_*(K_n^{\pm}\wedge \pi _2^*(T_j))\rightharpoonup (\pi _1)_*(K_n^{\pm}\wedge \pi _2^*(T))$ when
$j\rightarrow \infty$.

Define $K_n^{\pm}(T)=(\pi _1)_*(K_n^{\pm}\wedge \pi _2^*(T))$, and $K_n(T)=K_n^+(T)-K_n^-(T)$. Then $K_n(T) \rightharpoonup  T$ in $DSH^p(Z)$ as
$n\rightarrow \infty$. Moreover, $||K_n^{\pm}(T)||_{DSH}\leq A||T||_{DSH}$, where $A>0$ is independent of $T$ and $n$.

iv) For any $s>0$, there exists a number $l_0=l_0(s)$ so that $K_{n_l}\circ K_{n_{l-1}}\circ \ldots \circ K_{n_1}(T)$ is a $C^s$ form for any $l\geq
l_0$, any integers $n_1,n_2,\ldots ,n_l$, and any $DSH^p$ current $T$.

v) If $T$ is a continuous form then $K_n(T)$ converges uniformly to $T$. \label{TheoremApproximationOfDinhAndSibony}\end{theorem}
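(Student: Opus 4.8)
The plan is to take for the $K_n^{\pm}$ the Dinh--Sibony regularizing kernels of the diagonal $\Delta_Z\subset Z\times Z$, so that the statement is, in essence, the regularization theorem of \cite{dinh-sibony1}; the work lies in organizing the construction so that i)--v), and in particular the uniformities, are visible. I would proceed in three steps.

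\emph{Construction.} Since $\Delta_Z$ has complex codimension $k$ in $Z\times Z$, $[\Delta_Z]$ is a positive closed $(k,k)$ current. First I fix, via the proof of Lemma \ref{LemmaSmoothRepresentativeOfPositiveClosedCurrents}, a smooth closed $(k,k)$ form $\Omega=\Omega^{+}-\Omega^{-}$ with $\{\Omega\}=\{[\Delta_Z]\}$ and $\Omega^{\pm}$ strongly positive smooth closed. In a $C^{\infty}$ tubular neighbourhood of $\Delta_Z$, identified with a neighbourhood of the zero section of $N_{\Delta_Z}\simeq T_Z$, each fibre $\simeq\mathbb{C}^{k}$ carries a closed $(k-1,k-1)$ current with $L^1$ coefficients and Bochner-Martinelli type singularity at the origin whose $dd^c$ is the Dirac mass there; gluing these fibrewise by a partition of unity and correcting by a smooth form from the $dd^c$-lemma gives an $L^1$ current $U$ on $Z\times Z$, Bochner-Martinelli singular along $\Delta_Z$, with $dd^cU=[\Delta_Z]-\Omega$. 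Dilating this model toward $\Delta_Z$ at scale $1/n$ (fibrewise contraction of the normal coordinate) concentrates the kernel onto $\Delta_Z$ while preserving the Bochner-Martinelli type of its singularity, and produces positive closed $(k,k)$ forms with $L^1$ coefficients; after adding a fixed large multiple of a strongly positive smooth closed form one splits the result into strongly positive closed $K_n^{+}-K_n^{-}$ with $K_n^{+}-K_n^{-}\rightharpoonup[\Delta_Z]$. That the kernels are genuinely singular is unavoidable: in general $[\Delta_Z]$ admits no approximation by positive closed smooth forms of bounded mass (compare Proposition \ref{PropositionNoVeryGoodApproximation}). This settles i), the uniform mass bound being $\|K_n^{\pm}\|=\{K_n^{\pm}\}\cdot\{(\pi_1^{*}\omega_Z+\pi_2^{*}\omega_Z)^{k}\}$, which is independent of $n$; and ii) holds because the dilation confines $\mathrm{supp}(K_n^{+}-K_n^{-})$ to a $1/n$-neighbourhood of $\Delta_Z$.

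\emph{The induced operators.} With $K_n^{\pm}(T)=(\pi_1)_*(K_n^{\pm}\wedge\pi_2^{*}T)$: for $T=T_1-T_2\in DSH^p(Z)$ the pull-back $\pi_2^{*}T$ is $DSH$ on $Z\times Z$ ($\pi_2$ being a submersion), and each $K_n^{\pm}\wedge\pi_2^{*}T_i$ is well-defined and positive by the intersection calculus for positive closed currents whose potentials are no worse than the Bochner-Martinelli kernel, so $K_n^{\pm}(T)$ is a $DSH^{p}$ form. Property iii) is the weak continuity of that calculus together with $K_n^{+}-K_n^{-}\rightharpoonup[\Delta_Z]$; restricting iii) to the diagonal yields $K_n(T)=(\pi_1)_*((K_n^{+}-K_n^{-})\wedge\pi_2^{*}T)\rightharpoonup(\pi_1)_*([\Delta_Z]\wedge\pi_2^{*}T)=T$, i.e. $K_n(T)\rightharpoonup T$ in $DSH^{p}(Z)$. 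The estimate $\|K_n^{\pm}(T)\|_{DSH}\le A\|T\|_{DSH}$, with $A$ independent of $n$ and $T$, follows because the masses of $K_n^{\pm}(T)$ and of $dd^cK_n^{\pm}(T)$ are pairings of the fixed classes $\{K_n^{\pm}\}$, $\{dd^cK_n^{\pm}\}$ against classes coming from a near-optimal $DSH$ decomposition of $T$, hence are cohomological quantities that do not see $n$. For v), when $T$ is continuous one estimates $K_n^{\pm}\wedge\pi_2^{*}T$ using $K_n^{\pm}\wedge\pi_2^{*}(T\pm\varepsilon\omega_Z^{p})$ together with Lemmas \ref{LemmaBoundContinuousForms} and \ref{LemmaIntersectionOfPositiveContinuousForms}, and uses that $K_n^{\pm}$ has bounded mass concentrated in a $1/n$-neighbourhood of $\Delta_Z$ to obtain $K_n(T)\to T$ locally uniformly.

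\emph{The main difficulty: iv).} I expect the gain of regularity under iteration to be the hard part. In the tubular model the operator $K_n$ is a convolution against a kernel whose singularity along $\Delta_Z$ is of Bochner-Martinelli type on $\mathbb{C}^{k}\simeq\mathbb{R}^{2k}$, comparable to the Riesz kernel of order $2$; composing $l$ of the operators convolves $l$ such kernels, giving a kernel of Riesz order $2l$, hence of class $C^{s}$ once $2l>2k+s$, which pins down $l_0(s)$. The delicate point is the \emph{uniformity} of this gain in the indices $n_1,\dots,n_l$ and in $T$ (only $\|T\|_{DSH}$ enters): the \emph{order} of the singularity of each $K_{n_i}^{\pm}$ along $\Delta_Z$ is always the Bochner-Martinelli order --- the scale $1/n_i$ merely shrinks the support, it does not worsen the singularity --- so the kernel estimates for the composition are scale-invariant and the number of factors needed depends only on $k$ and $s$. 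What then remains is routine bookkeeping: passing from the single model chart to $Z$ through the partition of unity and absorbing the regularity-improving smooth corrections. In summary, i)--v) are the content of the regularization theorem of \cite{dinh-sibony1}; the only extra inputs we need --- uniformity in $n$, in the iteration indices, and in $T$ --- all come from the scale-invariance of the Bochner-Martinelli singularity.
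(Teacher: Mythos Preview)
Your opening line is exactly right: the paper's ``proof'' is nothing more than a list of pointers into \cite{dinh-sibony1} (Lemma~3.1 for i), Remark~4.5 for ii), Theorems~1.1 and~4.4 for iii), Lemma~2.1 for iv), Proposition~4.6 for v)). In that sense you and the paper agree that this theorem \emph{is} the Dinh--Sibony regularization theorem, and no independent proof is expected here.

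Where your proposal departs from the paper is in the sketch of the construction itself, and that sketch does not match the actual Dinh--Sibony kernels. The paper later recalls their construction explicitly: one blows up $Z\times Z$ along $\Delta_Z$, takes a quasi-psh function $\varphi$ on the blowup with $dd^c\varphi=[\widetilde{\Delta_Z}]-\Theta'$, regularizes it to $\varphi_n=\chi_n\circ\varphi$ via smooth convex cut-offs $\chi_n$, sets $\Theta_n^{+}=dd^c\varphi_n+\Theta$, $\Theta_n^{-}=\Theta-\Theta'$, and then pushes forward $K_n^{\pm}=\pi_*(\gamma\wedge\Theta_n^{\pm})$. The ``$n$'' enters through the level at which $\varphi$ is truncated, not through any rescaling of a tubular neighbourhood. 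Your alternative --- a $C^{\infty}$ tubular neighbourhood of $\Delta_Z$ with fibrewise Bochner--Martinelli kernels, then a dilation at scale $1/n$ --- runs into a genuine obstruction: the tubular-neighbourhood identification and the fibrewise contraction are only smooth, not holomorphic, so they do not preserve bidegree or $d$-closedness, and there is no reason the dilated object is a \emph{closed} $(k,k)$ form. The blowup is precisely the device that lets one manipulate a global quasi-psh \emph{function} (rather than a $(k-1,k-1)$ current), so that the regularization $\varphi\mapsto\chi_n\circ\varphi$ automatically keeps $dd^c\varphi_n+\Theta$ positive and closed of the right bidegree.

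Your heuristics for iii), v) and for the smoothing gain in iv) are reasonable at the level of intuition (Riesz-order counting is indeed the mechanism behind Lemma~2.1 of \cite{dinh-sibony1}), but since they are built on the wrong construction they do not by themselves constitute a proof. For this theorem the appropriate write-up is what the paper does: cite \cite{dinh-sibony1} item by item, and if you want to recall the construction, recall the blowup one.
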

\begin{proof}

The definition of $K_n^{\pm}$ is given in Section 3 in \cite{dinh-sibony1}, and we will recall the construction later in this subsection. All of the references below are from the same paper

i) is given in Lemma 3.1.

ii) is given in Remark 4.5.

iii) is given in Theorems 1.1 and 4.4.

iv) is given in Lemma 2.1.

v) is given in Proposition 4.6.
\end{proof}
Let us mention some notations  used later on.

\begin{remark} We use the following notations:

For integers $n_1,\ldots ,n_l$ and a $DSH^p$ current or continuous $(p,p)$ form $T$ on $Y$, we define $K_{n_l,n_{l-1},\ldots ,n_1}(T)=K_{n_l}\circ
K_{n_{l-1}}\circ \ldots \circ K_{n_1}(T)$. For simplicity, we write $(l)$ instead of $(n_1,\ldots ,n_l)$, and $\mathcal{K}_{(l)}(T)$ instead of
$K_{n_l,\ldots ,n_1}(T)$.

We write
\begin{eqnarray*}
\lim _{(n_1,n_2,\ldots ,n_l)\rightarrow \infty}T_{n_1,\ldots ,n_l}=T
\end{eqnarray*}
if for any sequence $(n_1)_k,\ldots ,(n_l)_k\rightarrow \infty$ we have
 \begin{eqnarray*}
\lim _{k\rightarrow \infty}T_{(n_1)_k,\ldots ,(n_l)_k}=T.
\end{eqnarray*}
 For simplicity we use
\begin{eqnarray*}
\lim _{(l)}T_{(l)}=T
\end{eqnarray*}
for such a limit.
\label{Remark1}\end{remark}

{\bf Example 4:} By Theorem \ref{TheoremApproximateInALinearWay} below, if $T$ is a $DSH$ current then
\begin{eqnarray*}
\lim _{(l)}\mathcal{K}_{(l)}(T)=T,
\end{eqnarray*}
for any $l\geq 0$.

The following consequence of Theorem \ref{TheoremApproximationOfDinhAndSibony} will be used to approximate $DSH^p(Y)$ currents by $C^s$ forms in a linear
way
\begin{theorem}
i) If $T_1$ is a $DSH^p(Z)$ current and $T_2$ is a continuous $(dim(Z)-p,dim(Z)-p)$ form on $Z$ then
\begin{eqnarray*}
\int _ZK_n^{\pm}(T_1)\wedge T_2=\int _ZT_1\wedge K_n^{\pm}(T_2).
\end{eqnarray*}

ii) For any integer $l$ and any $DSH^p(YZ$ current $T$, $\mathcal{K}_{(l)}(T)$ converges in $DSH^p(Z)$ to $T$. Here the convergence is understood in the
sense of Remark \ref{Remark1}. \label{TheoremApproximateInALinearWay}\end{theorem}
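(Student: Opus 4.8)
The plan is to derive both statements from Theorem \ref{TheoremApproximationOfDinhAndSibony}: I would prove i) first, exploiting the symmetry of the regularizing kernels, and then feed i) into an induction on $l$ for ii).

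\textbf{Part i).} The essential observation is that the kernels $K_n^{\pm}$ are invariant under the holomorphic involution $\sigma(x,y)=(y,x)$ of $Z\times Z$, i.e. $\sigma^*K_n^{\pm}=K_n^{\pm}$; this is apparent from the construction of $K_n^{\pm}$ (see \cite{dinh-sibony1}), and $\sigma$ satisfies $\pi_1\circ\sigma=\pi_2$, $\pi_2\circ\sigma=\pi_1$. Since the wedge product $K_n^{\pm}\wedge\pi_2^*(T_1)$ is a well-defined current on $Z\times Z$ by Theorem \ref{TheoremApproximationOfDinhAndSibony}, $T_2$ is continuous, and the projections are proper, the projection formula gives
\[
\int_Z K_n^{\pm}(T_1)\wedge T_2=\int_{Z\times Z}K_n^{\pm}\wedge\pi_2^*(T_1)\wedge\pi_1^*(T_2).
\]
All factors have even total degree and $\sigma$ is an orientation-preserving diffeomorphism, so pulling the integrand back by $\sigma$ leaves the integral unchanged and, using $\sigma^*K_n^{\pm}=K_n^{\pm}$, converts it into $\int_{Z\times Z}K_n^{\pm}\wedge\pi_1^*(T_1)\wedge\pi_2^*(T_2)$; a second use of the projection formula rewrites this as $\int_Z T_1\wedge K_n^{\pm}(T_2)$, which is i). If one prefers to avoid wedging the singular kernel against $\pi_2^*(T_1)$ directly, one first proves the identity for smooth $T_1$ and then passes to a general $T_1\in DSH^p(Z)$ by smooth approximation, using the continuity in Theorem \ref{TheoremApproximationOfDinhAndSibony}(iii) together with Lemma \ref{LemmaConvergenceOfDSHCurrents}.

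\textbf{Part ii).} I induct on $l$. The case $l=0$ is trivial, and $l=1$ is Theorem \ref{TheoremApproximationOfDinhAndSibony}(iii): for any $(n_1)_k\to\infty$ we have $K_{(n_1)_k}(T)\rightharpoonup T$, and $\|K_{(n_1)_k}(T)\|_{DSH}\le A\|T\|_{DSH}$ stays bounded, so $K_{(n_1)_k}(T)\to T$ in $DSH^p(Z)$. Assume the claim for $l$ and let $(n_1)_k,\dots,(n_{l+1})_k\to\infty$. Put $S_k=\mathcal K_{(l)}((n_1)_k,\dots,(n_l)_k)(T)$, so $S_k\to T$ in $DSH^p(Z)$ by the inductive hypothesis and in particular $M:=\sup_k\|S_k\|_{DSH}<\infty$. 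Then $\mathcal K_{(l+1)}((n_1)_k,\dots,(n_{l+1})_k)(T)=K_{(n_{l+1})_k}(S_k)$ has $DSH$ norm at most $AM$, so it only remains to check weak convergence to $T$. For a continuous $(dim(Z)-p,dim(Z)-p)$ form $\varphi$, part i) gives
\[
\int_Z K_{(n_{l+1})_k}(S_k)\wedge\varphi=\int_Z S_k\wedge\varphi+\int_Z S_k\wedge\big(K_{(n_{l+1})_k}(\varphi)-\varphi\big).
\]
By Theorem \ref{TheoremApproximationOfDinhAndSibony}(v), $K_m(\varphi)-\varphi\to0$ uniformly, and since $S_k=S_k^+-S_k^-$ with $\|S_k^{\pm}\|\le M$, Lemma \ref{LemmaBoundContinuousForms} bounds the last term by a constant times $\|K_{(n_{l+1})_k}(\varphi)-\varphi\|_{L^\infty}\to0$, while the middle term tends to $\int_Z T\wedge\varphi$ by Lemma \ref{LemmaConvergenceOfDSHCurrents}. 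Hence $K_{(n_{l+1})_k}(S_k)\rightharpoonup T$, which together with the norm bound completes the induction.

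\textbf{Main obstacle.} The only genuinely delicate point is the rigor of the pushforward computation in i) in the presence of the $L^1$ singularities of $K_n^{\pm}$ along the diagonal; this is exactly what the invariance $\sigma^*K_n^{\pm}=K_n^{\pm}$ and the integrability and continuity properties collected in Theorem \ref{TheoremApproximationOfDinhAndSibony} are there to take care of. Granting i), part ii) is a routine induction whose only bookkeeping subtlety — that $(n_1)_k,\dots,(n_l)_k$ tend to infinity simultaneously and possibly at different rates — is handled by absorbing the inner compositions into the currents $S_k$ and using the uniform bound $\|K_n^{\pm}(\cdot)\|_{DSH}\le A\|\cdot\|_{DSH}$ at each stage.
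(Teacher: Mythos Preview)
Your argument is essentially the paper's. The fallback you give for i) --- prove the identity for smooth $T_1$ and pass to general $T_1\in DSH^p(Z)$ using DSH-continuity of both sides (Theorem \ref{TheoremApproximationOfDinhAndSibony}(iii) for the left-hand side, Lemma \ref{LemmaConvergenceOfDSHCurrents} for the right) --- is precisely the paper's proof of i), and your induction for ii), moving the outermost $K_{n_{l+1}}$ across via i) and using uniform convergence of $K_m(\varphi)$ to $\varphi$, is the paper's argument verbatim.

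One caution on your primary route for i): the assertion $\sigma^*K_n^{\pm}=K_n^{\pm}$ is not guaranteed by the construction as recalled in this paper. The data $\gamma$, $\Theta'$, $\Theta$, $\varphi$ on the blowup $\widetilde{Z\times Z}$ are chosen with no symmetry requirement, so the resulting kernels need not be invariant under the involution. One can of course symmetrize (replace each by its average with its pullback under the lifted involution $\tilde\sigma$), and then your direct Fubini/pushforward computation goes through; but absent that step the symmetry claim is unsupported, and your fallback is the argument that actually carries the proof.
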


\begin{proof} (Of Theorem \ref{TheoremApproximateInALinearWay})

i) By Theorem \ref{TheoremApproximationOfDinhAndSibony}, the LHS of the equality we want to prove is continuous for the DSH convergence w.r.t. $T_1$. By
Lemma \ref{LemmaConvergenceOfDSHCurrents}, the RHS of the equality is also continuous for the DSH convergence w.r.t. $T_1$. Hence using the approximation
theorem for DSH currents of Dinh and Sibony, it suffices to prove the equality when $T_1$ is a smooth form, in which case it is easy to be verified.

ii) Note that since $||\mathcal{K}_{(l)}(T)||_{DSH}\leq A^l ||T||_{DSH}$ by Theorem \ref{TheoremApproximationOfDinhAndSibony}, to prove ii) it suffices
to show that $\mathcal{K}_{(l)}(T)$ converges weakly to $T$ in the sense of currents.

We prove by induction on $l$. If $l=1$, ii) is the content of Theorem \ref{TheoremApproximationOfDinhAndSibony}. To illustrate the idea of the proof, we
show for example how to prove ii) for the case $l=2$ when knowing ii) for $l=1$. Hence we need to show that: For a smooth $(dim (Z)-p,dim (Z)-p)$ form
$\alpha $
\begin{eqnarray*}
\lim _{(2)}\int _ZK_{n_2}\circ K_{n_1}(T)\wedge \alpha =\int _ZT\wedge \alpha .
\end{eqnarray*}
Since $\alpha$ is smooth, by i) we have
\begin{eqnarray*}
\lim _{(2)}\int _ZK_{n_2}\circ K_{n_1}(T)\wedge \alpha =\lim _{(2)}\int _ZK_{n_1}(T)\wedge K_{n_2}(\alpha ).
\end{eqnarray*}
By the case $l=1$ we know that $K_{n_1}(T)$ converges to $T$ in $DSH^p$. By Theorem \ref{TheoremApproximationOfDinhAndSibony}, $K_{n_2}(\alpha )$
converges uniformly to $\alpha$. Hence $\alpha -K_{n_2}(\alpha )$ is bound by $\epsilon _{n_2}\omega _Z^{dim(Z)-p}$, where $\epsilon _{n_2}\rightarrow 0$
as $n_2\rightarrow \infty$. A similar argument to that of the proof of Lemma \ref{LemmaConvergenceOfDSHCurrents} shows that
\begin{eqnarray*}
|\int _ZK_{n_1}(T)\wedge K_{n_2}(\alpha )-\int _ZK_{n_1}(T)\wedge \alpha |\leq A \epsilon _{n_2},
\end{eqnarray*}
where $A>0$ is independent of $n_1$ and $n_2$. Letting limit when $n_1,n_2$ converges to $\infty$ and using the induction assumption for $l=1$, we obtain
the claim for $l=2$.
\end{proof}

Now we define a good approximation scheme by $C^2$ forms as follows: Choose $l=l_0(2)$ in Theorem \ref{TheoremApproximationOfDinhAndSibony}, and choose the approximation $\mathcal{K}_{(2l)}$. Most of the requirements for good approximation scheme can be checked directly on $\mathcal{K}_{(2l)}$. The rest of this subsection shows the remaining requirements. The next remark concerns the $dd^c$ of $\mathcal{K}_{(2l)}$.

\begin{remark}
If $T$ is a $DSH^p(Y)$ current $T=T_1-T_2$ with $T_1,T_2$ positive $(p,p)$ currents, and $dd^c(T_i)=\Omega _i^+-\Omega
_i^-$ where $\Omega _i^{\pm}$ positive closed currents, then we can write:

i) $K_n(T)=T_{1,n}-T_{2,n}$ where $T_{1,n}=K_n^+(T_1)+K_n^-(T_2)$ and $T_{2,n}=K_n^-(T_1)+K_n^+(T_2)$ are positive currents with $L^1$ coefficients.

ii) $dd^c(T_{1,n})=\Omega _{1,n}^+-\Omega _{1,n}^{-}$, where $\Omega _{1,n}^+=K_n^+(\Omega ^+)+K_n^{-}(\Omega _2^+)$ and $\Omega _{1,n}^-=K_n^+(\Omega
^-)+K_n^{-}(\Omega _2^-)$ are positive closed $(p+1,p+1)$ currents with $L^1$ coefficients. Similarly, we can write $dd^c(T_{2,n})=\Omega _{2,n}^+-\Omega
_{2,n}^{-}$, where $\Omega _{2,n}^+$ and $\Omega _{2,n}^-$ are positive closed $(p+1,p+1)$ currents with $L^1$ coefficients.

iii) $||T_{i,n}||,||\Omega _{i,n}^{\pm}||\leq A||T||_{DSH}$, where $A>0$ is independent of $T$.

If we repeat this argument and use Theorem \ref{TheoremApproximationOfDinhAndSibony}, we see that for $l=2l_0(2)$ as above, we can write $\mathcal{K}_{(2l_0)}(T)=T_{1,(2l_0)}-T_{2,(2l_0)}$ where

i) $T_{i,(2l_0)}$ are positive $C^2$ forms.

ii) $dd^c(T_{i,(2l_0)})=\Omega _{i,(2l_0)}^+-\Omega _{i,(2l_0)}^-$, where $\Omega _{i,(2l_0)}^{\pm}$ are positive closed $C^2$ forms.

iii) $||T_{i,(2l_0)}||,||\Omega _{i,(2l_0)}^{\pm}||\leq A||T||_{DSH}$, where $A>0$ is independent of $T$.

More explicitly, we can write $\mathcal{K}_{(2l_0)}=\mathcal{K}^{+}_{(2l_0)}-\mathcal{K}^{-}_{(2l_0)}$, where $\mathcal{K}^{\pm}_{(2l_0)}$ are convex
combinations of compositions of $K_{m}^{\pm}$ (here $m$ belongs to the set $n_1,n_2,\ldots ,n_{2l_0}$), so that if $T$ is a positive $DSH$ current, then
$\mathcal{K}^{\pm}_{(2l_0)}(T)$ are positive currents. For example, if $l_0=1$, then $\mathcal{K}^{+}_{(2)}=K_{n_2}^+\circ K_{n_1}^++K_{n_2}^-\circ
K_{n_1}^-$ and $\mathcal{K}^{-}_{(2)}=K_{n_2}^-\circ K_{n_1}^++K_{n_2}^+\circ K_{n_1}^-$. Then we define
\begin{eqnarray*}
T_{1,(2l_0)}&=&\mathcal{K}^{+}_{(2l_0)}(T_1)+\mathcal{K}^{-}_{(2l_0)}(T_2),\\
T_{2,(2l_0)}&=&\mathcal{K}^{+}_{(2l_0)}(T_2)+\mathcal{K}^{-}_{(2l_0)}(T_1),\\
\Omega _{1,(2l_0)}^+&=&\mathcal{K}^{+}_{(2l_0)}(\Omega _1^+)+\mathcal{K}^{-}_{(2l_0)}(\Omega _2^+),\\
\Omega _{1,(2l_0)}^-&=&\mathcal{K}^{+}_{(2l_0)}(\Omega _1^-)+\mathcal{K}^{-}_{(2l_0)}(\Omega _2^-),
\end{eqnarray*}
and similarly for $\Omega _{2,(2l_0)}^{\pm}$.
 \label{Remark3}\end{remark}

The following refinements of Proposition 4.6 in \cite{dinh-sibony1} concern the continuity property of $\mathcal{K}_{(2l)}$. Its proof uses explicitly the properties of the kernels $K_n$ in Theorem \ref{TheoremApproximationOfDinhAndSibony} from Section 3 in \cite{dinh-sibony1}, which we recall briefly here. Let $\pi :\widetilde{Z\times Z}\rightarrow Z\times Z$ be the blowup along the diagonal $\Delta _Z$, and let $\widetilde{\Delta _Z}=\pi ^{-1}(\Delta _Z)$. Choose a strictly positive closed $(k-1,k-1)$ form $\gamma $ on $\widetilde{Z\times Z}$ so that $\pi _*(\gamma \wedge [\widetilde{\Delta _Z}])=[\Delta _Z]$. We let $\Theta '$ be a smooth closed $(1,1)$ form on $\widetilde{Z\times Z}$ having the same cohomology class with $[\widetilde{\Delta _Z}]$, and let $\varphi$ be a quasi PSH function so that $dd^c \varphi =[\widetilde{\Delta _Z}]-\Theta '$. Observe that $\varphi $ is smooth out of $[\widetilde{\Delta _Z}]$, and $\varphi ^{-1}(-\infty )=\widetilde{\Delta _Z}$. Let $\chi :\mathbb{R}\cup \{-\infty\} \rightarrow \mathbb{R}$ be a smooth increasing convex function such that $\chi (x)=0$ on $[-\infty ,-1]$, $\chi (x)=x$ on $[1, +\infty ]$, and $0\leq \chi '\leq 1$.  Define $\chi _n(x)=\chi (x+n)-n$, and $\varphi _n=\chi _n\circ \varphi $. The functions $\varphi _n$ are smooth decreasing to $\varphi$, and $dd^c \varphi _n\geq -\Theta $ for every $n$, where $\Theta$ is a strictly positive closed smooth $(1,1)$ form so that $\Theta -\Theta '$ is positive. Then we define $\Theta _n^+=dd^c \varphi _n+\Theta $ and $\Theta _n^-=\Theta ^-=\Theta -\Theta '$, and finally $K_n^{\pm}=\pi _*(\gamma \wedge \Theta _n^{\pm})$, and $K_n=K_n^{+}-K_n^-$.

\begin{proposition}
i) Let $T_n$ be a sequence of $DSH^p(Z)$ currents converging in $DSH$ to $T$. Assume that there is an open set $U\subset Z$ so that $T_n|_U$ are continuous forms, and $T_n$ converges locally uniformly on $U$ to $T$. Then $K_n^{\pm}(T_n)|_U$ are continuous and converges locally uniformly on $U$.

ii) Let $T$ be a $DSH^p(Z)$ current. Assume that there is an open set $U\subset Z$ so that $T|_U$ is a continuous form. Then for any positive integer $l$, $\mathcal{K}^{\pm}_{(l)}(T)|_{U}$ are continuous forms, and converges locally uniformly on $U$.
\label{PropositionUniformlyApproximation}\end{proposition}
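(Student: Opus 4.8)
\emph{Proof sketch.} The plan is to establish i) first, in fact in the slightly stronger form in which the two running indices are decoupled: if $S_n$ are differences of positive currents with uniformly bounded mass, $S_n\rightharpoonup T$, the restrictions $S_n|_U$ are continuous forms converging locally uniformly on $U$ to $T|_U$, and $m_n\to\infty$ is an arbitrary sequence of integers, then $K^{\pm}_{m_n}(S_n)|_U$ are continuous forms converging locally uniformly on $U$. The statement of i) is the case $m_n=n$; ii) will be deduced from this stronger form by induction on $l$, using that $\mathcal{K}^{\pm}_{(l)}$ is, by Remark \ref{Remark3}, a convex combination of length-$l$ compositions $K^{w_l}_{n_l}\circ\cdots\circ K^{w_1}_{n_1}$ of the blocks $K^{\pm}_{n}$.

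The one geometric input is a local description of the kernels. From the construction recalled just before the statement, $K^-_n=K^-:=\pi_*(\gamma\wedge\Theta^-)$ is independent of $n$: a positive $(k,k)$-form on $Z\times Z$ with $L^1$ coefficients, smooth off $\Delta_Z$, with Bochner--Martinelli type singularity along $\Delta_Z$. Moreover $\varphi_n=\chi_n\circ\varphi$ coincides with $\varphi$ on $\{\varphi\ge 1-n\}$, and $\varphi$ is continuous, hence bounded below, on $\pi^{-1}(\{\mathrm{dist}(z,w)\ge\delta\})$; since $dd^c\varphi=-\Theta'$ off $\widetilde{\Delta_Z}$, this yields: for each $\delta>0$ there is $n_0(\delta)$ with $K^+_n=K^-_n=K^-$ on $\{\mathrm{dist}(z,w)\ge\delta\}$ for all $n\ge n_0(\delta)$; in particular $K_n=K^+_n-K^-_n$ is supported in $\{\mathrm{dist}(z,w)<\delta\}$ once $n\ge n_0(\delta)$, while $K^{\pm}_n(z,\cdot)$ equals the fixed smooth kernel $K^-(z,\cdot)$ outside that set (note $K^{\pm}_n(z,\cdot)$ is smooth off the diagonal for \emph{every} $n$). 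I will use the standard consequences of the uniform estimate $\sup_z\int_w|K^{\pm}_n(z,w)|\le C$: the operators $K^{\pm}_n$ are bounded $C^0\to L^\infty$ uniformly in $n$ and send continuous forms to continuous forms; $K^-$ is bounded $C^0\to C^0$; $K_n(\beta)\to\beta$ uniformly for every continuous form $\beta$ (Theorem \ref{TheoremApproximationOfDinhAndSibony}(v)); and, by symmetry of the construction in the two factors, the analogous statement for the transposed operators.

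For i), fix $z_0\in U$, a coordinate ball with $\overline{B(z_0,2r)}\subset U$, and $\rho\in C^\infty_c(B(z_0,2r))$ with $\rho\equiv 1$ on $B(z_0,r)$, and split $S_n=\rho S_n+(1-\rho)S_n$. Then $\rho S_n$, extended by zero, is a continuous form on $Z$ converging uniformly to $\rho T$, while $(1-\rho)S_n\rightharpoonup(1-\rho)T$ with uniformly bounded mass. For $z\in B(z_0,r/2)$ one has $\mathrm{dist}(z,w)\ge r/2$ whenever $\rho(w)\neq 1$; hence, once $m_n\ge n_0(r/2)$, $K_{m_n}((1-\rho)S_n)=0$ and $K^{\pm}_{m_n}((1-\rho)S_n)=K^-((1-\rho)S_n)$ on $B(z_0,r/2)$, and $K^-((1-\rho)S_n)(z)=\langle(1-\rho)S_n,K^-(z,\cdot)\rangle$ is the pairing of a bounded-mass current, supported away from $z$, against a smooth kernel depending continuously on $z$; since $\{K^-(z,\cdot)|_{\{\mathrm{dist}\ge r/2\}}:z\in\overline{B(z_0,r/4)}\}$ is $C^0$-compact, $K^-((1-\rho)S_n)$ is continuous on $\overline{B(z_0,r/4)}$ and converges uniformly there (bounded-mass weak convergence tested against a $C^0$-compact family). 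On the other hand $K_{m_n}(\rho S_n)=K_{m_n}(\rho T)+K_{m_n}(\rho(S_n-T))$ converges uniformly on $Z$ to $\rho T$ by (v) and the uniform $C^0\to L^\infty$ bound, while $K^{\pm}_{m_n}(\rho S_n)$ and $K^-(\rho S_n)$ are continuous on $Z$ and the last converges uniformly to $K^-(\rho T)$. Assembling: on $B(z_0,r/2)$ and for $n$ large, $K_{m_n}(S_n)=K_{m_n}(\rho S_n)$ is continuous and converges uniformly to $\rho T=T$; $K^-_{m_n}(S_n)=K^-(\rho S_n)+K^-((1-\rho)S_n)$ is continuous on $\overline{B(z_0,r/4)}$ and converges uniformly; hence so is $K^+_{m_n}(S_n)=K_{m_n}(S_n)+K^-_{m_n}(S_n)$. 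Continuity of $K^{\pm}_n(S_n)|_U$ for every, not only large, $n$ follows from the same splitting since $K^{\pm}_n(z,\cdot)$ is smooth off the diagonal. As $z_0\in U$ is arbitrary, i) follows.

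For ii) I induct on the length $l$ of a sign word $\mathbf{w}=(w_1,\dots,w_l)$, proving that $K^{w_l}_{n_l}\circ\cdots\circ K^{w_1}_{n_1}(T)$ has uniformly bounded $DSH$-norm (immediate from Theorem \ref{TheoremApproximationOfDinhAndSibony}), converges in $DSH$ in the sense of Remark \ref{Remark1} to a limit $L^{\mathbf{w}}(T)$, and — when $T|_U$ is continuous — is continuous on $U$ and converges locally uniformly there; summing over the words occurring in $\mathcal{K}^{\pm}_{(l)}$ then yields ii). The base case $l=1$ is i) for the constant sequence $S_n\equiv T$, together with $K^+_n(T)=K_n(T)+K^-(T)\rightharpoonup T+K^-(T)$. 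For the step, write the length-$(l+1)$ composition as $K^{w_{l+1}}_{n_{l+1}}(S)$ with $S=K^{w_l}_{n_l}\circ\cdots\circ K^{w_1}_{n_1}(T)$; by induction $S\to L^{\mathbf{w}}(T)$ in $DSH$ (hence with bounded mass) and $S|_U$ is continuous and converges locally uniformly on $U$. When $w_{l+1}=-$, this is $K^-(S)$, which the argument of i) handles with the fixed kernel $K^-$, and $K^-(S)\rightharpoonup K^-(L^{\mathbf{w}}(T))$ by Theorem \ref{TheoremApproximationOfDinhAndSibony}(iii). When $w_{l+1}=+$, split $K^+_{n_{l+1}}(S)=K_{n_{l+1}}(S)+K^-(S)$: the second summand is as before, while for the first, i) in its stronger form (kernel index $n_{l+1}\to\infty$, current sequence $S\to L^{\mathbf{w}}(T)$) gives local continuity and local uniform convergence on $U$, and a diagonal argument — combining the two halves of Theorem \ref{TheoremApproximationOfDinhAndSibony}(iii) with the uniform bounds $\sup_n\|K^{\pm}_n\|_{C^0\to L^\infty}<\infty$, which also control the transposed operators — gives $K_{n_{l+1}}(S)\rightharpoonup L^{\mathbf{w}}(T)$. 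This closes the induction. The main obstacle is precisely this last point in the step: to re-enter i) one must propagate, at every intermediate stage, the \emph{global} $DSH$-convergence of the current alongside its local continuity and local uniform convergence on $U$, which is what forces the diagonal version of Theorem \ref{TheoremApproximationOfDinhAndSibony}(iii) with kernel and current converging simultaneously. Granting the local kernel picture of the second paragraph, all the remaining estimates are routine.
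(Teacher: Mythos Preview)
Your proof is correct and follows essentially the same route as the paper: both arguments use the cutoff splitting $T_n=\rho T_n+(1-\rho)T_n$, the key structural fact that $K_n^-\equiv K^-$ and that $K_n^+=K^-$ on $\{\mathrm{dist}\ge\delta\}$ once $n\ge n_0(\delta)$ (so the far-off piece is integration against a fixed smooth kernel), and then feed i) back into an induction on the composition length for ii). Your explicit decoupling of the kernel index from the current index in i), and your careful tracking of the global $DSH$-convergence at each stage of the induction (via self-adjointness of $K_n^{\pm}$ as in Theorem~\ref{TheoremApproximateInALinearWay}), spell out points the paper leaves implicit when it writes ``apply i) to the sequence $T_n=K_n^+(T)$'' in the $l=2$ case.
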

\begin{proof}

i) Let $U_1\subset \subset U_2\subset\subset U_3\subset \subset  U$ be a relative compact open sets in $U$. We will show that $K_n^{\pm}(T_n)$ converges uniformly on $U_1$. Let $\chi _2:Z\rightarrow  [0,1]$ be a cutoff function for $U_2$ so that $\chi _2$ is smooth, $\chi _2=1$ on $U_2$ and $\chi _2=0$ outside of $U_3$. We write $K_n^{\pm}(T_n)=K_n^{\pm}(\chi _2T_n)+K_n^{\pm}((1-\chi _2 )T_n)$. By assumptions, $\chi _2T_n$ converges uniformly on $Z$ to $\chi _2T$, so there are $\epsilon _n$ decreasing to $0$ as $n\rightarrow 0$ so that $-\epsilon _n \omega _Z^p\leq \chi _2T_n-\chi _2T\leq \epsilon _n\omega _Z^p$. Then

\begin{eqnarray*}
-\epsilon _n K_n^{\pm}(\omega _Z^p)\leq K_n^{\pm}(\chi _2T_n)-K_n^{\pm}(\chi _2T)\leq \epsilon _n K_n^{\pm}(\omega _Z^p).
\end{eqnarray*}

Now $K_n^{-}(\omega _Z^p)= K^{-}(\omega _Z^p)$ is a smooth form, and hence $K_n^{+}(\omega _Z^p)=K_n(\omega ^p)-K^-(\omega _Z^p)$ is a sequence of smooth forms converging uniformly on $Z$, by applying  Proposition 4.6 in \cite{dinh-sibony1} to $\omega _Z^p$. Hence to prove i), it remains to show that  $K_n^{\pm}((1-\chi _2)T_n)$ converges uniformly on $U_1$.

We let $\chi _1:Z\rightarrow [0,1] $ be a cutoff function for $U_1$ so that $\chi _1$ is smooth, $\chi _1=1$ on $U_1$ and $\chi _1=0$ outside of $U_2$. Then it suffices to show that $\chi _1K_n^{\pm}((1-\chi _2)T_n)$ uniformly converges on $Z$.By definition, we have
\begin{eqnarray*}
\chi _1K_n^{\pm}((1-\chi _2)T_n)(x)&=&\int _Z\chi _1(x)K_n^{\pm}(x,y)\wedge (1-\chi _2(y))T_n(y)dy\\
 &=&\int _Z\chi _1(x)(1-\chi _2(y))K_n^{\pm}(x,y)\wedge T_n(y)dy .
\end{eqnarray*}
By definition of $\chi _1$ and $\chi _2$, the support of $\chi _1(x)(1-\chi _2(y))K_n^{\pm}(x,y)$ is contained in a fixed compact set of $Z\times Z-\Delta _Z$. Hence by definition of $K_n^{\pm}$, there is an $n_0$ and smooth forms $k ^{\pm}(x,y)$ on $Z\times Z$ so that $\chi _1(x)(1-\chi _2(y))K_n^{\pm}(x,y)=k^{\pm}(x,y)$ for all $n\geq n_0$. Then for $n\geq n_0$ we have
\begin{eqnarray*}
\chi _1K_n^{\pm}((1-\chi _2)T_n)(x)=\int _Zk^{\pm}(x,y)\wedge T_n(y)dy,
\end{eqnarray*}
 and the $RHS$ converges uniformly to $\int _Zk^{\pm}(x,y)\wedge T(y)dy$ since $T_n\rightharpoonup T$.

ii) We prove the claim for example for the case $l=1$ and $l=2$.

First, consider the case $l=1$. Then ii) follows by applying i) to the constant sequence $T_n=T$.

Now we consider the case $l=2$. Then $\mathcal{K}^{+}_{(2)}(T)=K_{n_2}^+\circ K_{n_1}^+(T)+K_{n_2}^-\circ K_{n_1}^-(T)$, and $\mathcal{K}_{(2)}^-(T)=K_{n_2}^+\circ K_{n_1}^-(T)+K_{n_2}^-\circ K_{n_1}^+(T)$. We show for example that $K_{n_2}^+\circ K_{n_1}^+(T)$ converges uniformly locally on $U$ as both $n_1$ and $n_2$ go to $\infty$. We apply i) to the sequence $T_n=K_n^{+}(T)$. The two conditions of i) are not hard to check: First, by the case $l=1$ the sequence $T_n$ converges locally uniformly on $U$. Second, by Theorem \ref{TheoremApproximateInALinearWay}, $T_n=K_n(T)+K^-(T)\rightharpoonup T +K^-(T)$.
\end{proof}

\subsection{The case of projective spaces}

In this case, Dinh and Sibony \cite{dinh-sibony4} used super-potential to define pullback of a positive closed current. We recall their definition in this subsection. The reader is referred to \cite{dinh-sibony4} for more detail.

a) Quasi-potentials:

Let $\omega$ be the Fubini-Study form on $\mathbb{P}^k$, normalized so that $||\omega ||=1$. Let $\mathcal{C}_p$ be the convex set of positive closed
$(p,p)$ currents $T$ on $\mathbb{P}^k$, normalized so that $||T||=1$. If $T\in \mathcal{C}_p$, then there is a $(p-1,p-1)$ current $U_T$ bounded from above so that
$T-\omega ^p=dd^c(U_T)$, and we call $m=\int _XU_T\wedge \omega ^{k-p+1}$ the mean of $U_T$. We call $U_T$ a quasi-potential of $T$ of mean $m$. For simplicity we choose $m=0$.

b) Deformation of currents:

The group $Aut(\mathbb{P}^k)$ of automorphisms of $\mathbb{P}^k$ is the complex Lie group $PGL(k+1,\mathbb{C})$ of dimension $k^2+2k$. We choose a local
holomorphic coordinate chart $y$ ($y\in \mathbb{C}^{k^2+2k}$, with $|y|<2$) of $Aut(\mathbb{P}^k)$ near the identity $id\in Aut(\mathbb{P}^k)$, in such a
way that $y=0$ at $id$. The element in $Aut(\mathbb{P}^k)$ with coordinate $y$ is denoted by $\tau _y$. Assume that the norm $|y|$ is invariant under the
involution $\tau \leftrightarrow \tau ^{-1}$. Choose a smooth probability $\rho$ with support in $|y|<1$ so that $\rho$ is radially and decreasing in
$|y|$.

Let $R$ be a positive or negative current on $\mathbb{P}^k$. For $\theta \in \mathbb{C}$ with $|\theta |\leq 1$, define
\begin{equation}
R_{\theta}:=\int _{Aut(\mathbb{P}^k)}(\tau _{\theta y})_*(R)d\rho (y)=\int _{Aut(\mathbb{P}^k)}(\tau _{\theta y})^*(R)d\rho
(y).\label{EquationDeformationOfCurrentsOnPk}
\end{equation}
This has the same positiveness or negativeness as $R$.  Lemma 2.1.5 in \cite{dinh-sibony4} shows that as $\theta \rightarrow 0$ then $R_{\theta}$ weakly
converges to $R$ and $supp(R_{\theta})$ converges to $supp(R)$. Moreover, if $U\subset \mathbb{P}^k$ is open and $R|_U$ is continuous, then $R_{\theta}$ converges locally uniformly on $U$ to $R$.

c) Super-potential:

Let $S$ be a smooth form in $\mathcal{C}_p$, and let $R$ be in $\mathcal{C}_{k-p+1}$. If $U_R$ is a quasi-potential of $R$ (of mean $0$), then the number
$\int _{X}S\wedge U_R$ is independent of the choice of $U_R$, and is denoted by
\begin{eqnarray*}
\mathcal{U}_S(R)=\int _{X}S\wedge U_R,
\end{eqnarray*}
and $\mathcal{U}_S$ is called the superpotential (of mean $0$) of $S$.

For arbitrary $S\in \mathcal{C}_p$ and $R\in \mathcal{C}_{k-p+1}$, define
\begin{eqnarray*}
\mathcal{U}_S(R):=\lim _{\theta \rightarrow 0}\mathcal{U}_{S_{\theta}}(R)=\lim _{\theta\rightarrow 0}\mathcal{U}_{S}(R_{\theta}).
\end{eqnarray*}
Note that this definition is symmetric $\mathcal{U}_S(R)=\mathcal{U}_R(S)$.

d) Pullback of currents:

Let $f:\mathbb{P}^k\rightarrow \mathbb{P}^k$ be a dominant rational map. A positive closed $(p,p)$ current $T$ is called $f^*$-admissible if
$$\mathcal{U}_T(f_*(\omega ^{k-p+1}))>-\infty .$$
In this case, we define $f^*(T)$ as follows:
\begin{eqnarray*}
f^*(T)=\lim _{\theta \rightarrow 0}f^*(T_{\theta}).
\end{eqnarray*}

\subsection{Some open questions}

Let $X$ be a compact K\"ahler manifold, and let $f:X\rightarrow X$ be a dominant meromorphic map.

A) Let $T$ be a positive closed $(p,p)$ current on $X$ with Siu's decomposition $T=R+\sum _{j}\lambda _j[V_j]$. Let $E(T)$ be as in Theorem
\ref{TheoremUseSiuDecomposition}. Assume that for any irreducible analytic $V$ contained in $E(T)$ then $f^{-1}(V)$ has codimension $\geq p$. Is
$f^{\sharp}(T)$ well-defined? If so, is $f^{\sharp}(R)$ positive? Note that by Corollary \ref{CorollaryTheMapJ}, $f^{\sharp}(T)$ may not be positive
though.

B) Assume that $\pi _1(\mathcal{C}_f)$ has codimension $\geq p$.

a) When $X=\mathbb{P}^k$, \cite{dinh-sibony4} showed that $\pi _1(\mathcal{C}_{f^n})$ has codimension $\geq p$ for all $n$. Is the same true for a
general $X$?

b) Does $f$ satisfy $dd^c$-$p$ stability condition? This holds for $p=1$.

c) Using a) and the fact that when $X=\mathbb{P}^k$ then $f^{\sharp}$ preserves the convex cone of positive $(p,p)$ currents, \cite{dinh-sibony4} showed
that if moreover $f$ is $p$-algebraic stable then $(f^n)^{\sharp}=(f^{\sharp})^n$ for all $n$. Does the same conclusion hold when $X$ is arbitrary K\"ahler
manifold? We check that the answer to this question is positive when $f=J_X$:

\begin{lemma}
Let $J_X$ be the same map in Section 4. Then $J_X$ is $2$-algebraic stable and $(J_X^{\sharp})^2=Id =(J_X^2)^{\sharp}$ on positive closed $(2,2)$ currents.
\label{LemmaJXSquare}\end{lemma}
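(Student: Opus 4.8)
The statement is the conjunction of $2$-algebraic stability with the identity $(J_X^{\sharp})^2=\mathrm{Id}=(J_X^2)^{\sharp}$ on positive closed $(2,2)$ currents, and I would handle these in increasing order of difficulty. The equality $(J_X^2)^{\sharp}=\mathrm{Id}$ is essentially free: $J$, hence $J_X$, is an involution, so $J_X^2=\mathrm{id}_X$ as a meromorphic self-map, and $\mathrm{id}_X^{\sharp}(T)=T$ for every current $T$ straight from Definition \ref{DefinitionPullbackDdcOfOrderSCurrents}. For $2$-algebraic stability it suffices, again because $J_X^2=\mathrm{id}_X$, to verify $(J_X^{*})^2=\mathrm{Id}$ on $H^{2,2}(X)$: then $(J_X^{*})^n=(J_X^n)^{*}$ for every $n$, being $\mathrm{Id}$ for $n$ even and $J_X^{*}$ for $n$ odd. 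This is a direct substitution of the explicit matrix of $J_X^{*}$ on $H^{2,2}(X)$ recorded at the start of Section 4 into itself, checking $(J_X^{*})^2(H^2)=H^2$ and $(J_X^{*})^2(L_i)=L_i$.

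The substance is $(J_X^{\sharp})^2(T)=T$ for every positive closed $(2,2)$ current $T$. That $(J_X^{\sharp})^2$ makes sense on such currents and is a weakly continuous linear operator follows from Theorem \ref{TheoremInterestingExample1} (applicable since $\mathrm{codim}\,\pi_1(\mathcal{C}_{J_X})\ge 2$, cf.\ the proof of Corollary \ref{CorollaryTheMapJ}), which yields $J_X^{\sharp}(T)$ as a difference of two positive closed currents, together with Lemma \ref{LemmaGoodPropetiesOfPullbackOperator}(ii) and the continuity in Theorem \ref{TheoremInterestingExample1}. I would then pass to the resolution $\pi:Y\to X$ of Proposition \ref{PropositionTheOperatorDotPullback}, on which the lift $J_Y$ is an automorphism with $J_Y^2=\mathrm{id}_Y$, $\pi\circ J_Y=J_X\circ\pi$, and $J_Y$ interchanging the exceptional divisors $S_{i,j}$ over the pairwise disjoint curves $\widetilde{\Sigma_{i,j}}$. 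Running the smooth-approximation argument of Lemma \ref{LemmaOtherGoodPropertyOfTheMapJ} with $T$ in place of $[\widetilde{\Sigma_{0,1}}]$ identifies $J_X^{\sharp}(T)$ with $\pi_*(J_Y^{*}\pi^{*}T)$ up to a current carried by $\bigcup S_{i,j}$; iterating once and using $J_Y^2=\mathrm{id}_Y$ and $\pi_*\pi^{*}=\mathrm{id}$ gives
\[
(J_X^{\sharp})^2(T)=T+\pi_*\bigl(J_Y^{*}E\bigr),\qquad E:=\pi^{*}\pi_*\bigl(J_Y^{*}\pi^{*}T\bigr)-J_Y^{*}\pi^{*}T,
\]
with $E$ a difference of positive closed $(2,2)$ currents supported on $\bigcup S_{i,j}$ and $\pi_*E=0$.

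It then remains to see that the correction $\pi_*(J_Y^{*}E)$ vanishes. Away from $\bigcup\widetilde{\Sigma_{i,j}}$ the map $J_X$ is biholomorphic with square the identity, so the correction is supported on $\bigcup\widetilde{\Sigma_{i,j}}$; being a difference of positive closed currents supported on a disjoint union of smooth curves it is a normal current, and the structure theorem forces $\pi_*(J_Y^{*}E)=\sum_{i,j}c_{i,j}[\widetilde{\Sigma_{i,j}}]$. Since the $S_{i,j}$ are pairwise disjoint, the coefficient $c_{i,j}$ is detected near $\widetilde{\Sigma_{i,j}}$ alone, where $\pi_*(J_Y^{*}E)=\pi_*(J_Y^{*}E_{3-i,3-j})$ with $E_{3-i,3-j}$ the component of $E$ carried by $S_{3-i,3-j}$; and the class of $E_{k,l}$ near $S_{k,l}$ is a multiple of the fibre class with coefficient $-\{J_Y^{*}\pi^{*}T\}\cdot\{S_{k,l}\}$, which by $J_Y$-invariance of the intersection form on $Y$ and the projection formula equals $-\{\pi^{*}T\}\cdot\{(J_Y)_{*}S_{k,l}\}=-\{\pi^{*}T\}\cdot\{S_{3-k,3-l}\}=-\{T\}\cdot\pi_{*}\{S_{3-k,3-l}\}=0$, because the pushforward of an exceptional divisor class vanishes. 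Hence each $E_{k,l}$ is cohomologically trivial near $S_{k,l}$, so $c_{i,j}[\widetilde{\Sigma_{i,j}}]$ is cohomologically trivial near $\widetilde{\Sigma_{i,j}}$, and since $[\widetilde{\Sigma_{i,j}}]$ is not, $c_{i,j}=0$. Therefore $(J_X^{\sharp})^2(T)=T$.

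I expect the main obstacle to be the two "up to an exceptional term" passages: establishing $J_X^{\sharp}(T)=\pi_*(J_Y^{*}\pi^{*}T)+(\text{current on }\bigcup S_{i,j})$ for a general positive closed $(2,2)$ current, which requires rerunning the approximation of Lemma \ref{LemmaOtherGoodPropertyOfTheMapJ} and bookkeeping how weak limits of the smooth pullbacks $\pi^{*}T_n^{\pm}$ can concentrate on $\bigcup S_{i,j}$; and, more delicate, upgrading "the correction current is cohomologically trivial" to "it vanishes identically", which really uses that the $\widetilde{\Sigma_{i,j}}$ are pairwise disjoint so each coefficient $c_{i,j}$ is a genuinely local invariant paired against a nonzero local class, together with a Federer/King-type structure theorem for normal currents supported on curves.
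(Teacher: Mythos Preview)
Your approach is correct in spirit but takes a genuinely different route from the paper's, and is considerably more laborious. The paper does \emph{not} iterate on the resolution $Y$. Instead it proceeds in three short steps: (1) show $(J_X^{\sharp})^2(R)=R$ for every $DSH^1$ current $R$ of bidegree $(1,1)$, by reducing via continuity (Theorem \ref{TheoremInterestingExample1}) and approximation to smooth $(1,1)$ forms, where the identity holds on $X\setminus A$ because $J_X$ is biholomorphic there, and then invoking the Federer-type support theorem of \cite{bassanelli} since $A$ has codimension $2$; (2) deduce from (1) and the $dd^c$-lemma that $(J_X^{\sharp})^2(T)-T$ depends only on the cohomology class $\{T\}$; (3) check the identity on a spanning set for $H^{2,2}(X)$ consisting of explicit curves (a generic line, generic lines in the $E_i$, and the $\widetilde{\Sigma}_{i,j}$). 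The $2$-algebraic stability is obtained not by squaring the matrix but by noting that $J_X$ has no exceptional hypersurface, hence is $1$-algebraic stable, and then using $J_X=J_X^{-1}$ and duality.

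Your resolution argument can be made to work, but the places you flag as obstacles are real. The symbol $\pi^{*}T$ for a non-smooth $T$ is ambiguous; what you actually use is the subsequential weak limit $\pi^{\bullet}T$ of $\pi^{*}(T_n^{+}-T_n^{-})$, which is not canonical (and Proposition \ref{PropositionTheOperatorDotPullback} warns that other candidate pullbacks fail to respect cohomology). You then need to check carefully that $\{\pi^{\bullet}T\}=\pi^{*}\{T\}$ (this follows from continuity of cohomology under weak limits of closed currents), that $\pi_{*}\pi^{\bullet}=\mathrm{id}$, and that in the second iteration the new $\pi^{\bullet}$ applied to $J_X^{\sharp}(T)=\pi_{*}(J_Y^{*}\pi^{\bullet}T)$ again differs from $J_Y^{*}\pi^{\bullet}T$ only on $\bigcup S_{i,j}$. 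All of this is true, and your vanishing argument for $c_{i,j}$ via the projection formula and $\pi_{*}\{S_{k,l}\}=0$ is sound. But the paper's reduction to $(1,1)$ potentials sidesteps every one of these bookkeeping steps, and Step (1) is moreover the template for the $dd^c$-$p$ stability arguments in Section~5, which is why the paper prefers it.
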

\begin{proof}
Since $J_X$ has no exceptional hypersurface, $J_X$ is $1$-algebraic stable. Because $J_X=J_X^{-1}$, it follows by duality that $J_X$ is also $2$-algebraic stable. Since $J_X^2=Id$,  it remains to check that $(J_X^{\sharp})^2=Id$.  Define $A=\bigcup _{i\not= j}\widetilde{\Sigma}_{i,j}$.

1) First we show that for a $DSH^1$ current $R$ then:
\begin{equation}
(J_X^{\sharp})^2(R)=R.\label{EquationLemmaJXSquare.1}
\end{equation}
 For this end, first we show that $(J_X^{\sharp})^2(R)=R$ on $X-A$. Since $J_X^{\sharp}$ is continuous in the $DSH ^1$ topology by Theorem \ref{TheoremInterestingExample1}, using Theorem \ref{TheoremApproximationOfDinhAndSibony}  it suffices to show (\ref{EquationLemmaJXSquare.1}) for a smooth $(1,1)$ form $R$. In that case it is easy to see, since $(J_X^{\sharp})^2(R)$ is determined by its restriction on $X-A$, and on $X-A$ it is not other than the usual pullback of smooth forms  $(J_X|_{X-A}^*)^2(R)$.

 Having $(J_X^{\sharp})^2(R)=R$ on $X-A$, then (\ref{EquationLemmaJXSquare.1}) follows by the Federer type of support in \cite{{bassanelli}}.

2) It follows from 1) that if $T$ is a positive closed $(2,2)$ current on $X$, then $(J_X^{\sharp})^2(T)-T$ depends only on the cohomology class of $T$. In fact, if $T'$ is a positive closed $(2,2)$ current having the same cohomology class as $T$, then $T-T'=dd^c(R)$ for a $DSH^1$ current $R$. Then from 1)
\begin{eqnarray*}
(J_X^{\sharp})^2(T)-(J_X^{\sharp})^2(T')=dd^c(J_X^{\sharp})^2(R)=dd^c(R)=T-T'.
\end{eqnarray*}

3) From 2), to prove Lemma \ref{LemmaJXSquare} it suffices to show it for a set of positive closed currents whose cohomology classes generate
$H^{2,2}(X)$. For such a set, we can consider the currents of integrations on a generic line in $\mathbb{P}^3$, a generic line in the exceptional
divisors $E_0,E_1,E_2,E_3$, and the line $\widetilde{\Sigma}_{i,j}$. In these cases, the wanted equality is easy to be checked.
\end{proof}

C) Can the constructions of invariant currents in the Subsection \ref{SubsectionApplications} be extended to other cases, for example for a map in
Question B?

Lemma \ref{LemmaInvariantCurrentForKahlerClass} gives a positive support to this question. More generally, for any meromorphic map $f$, there are natural
candidates $\mu$ for an invariant measure of $f$. These measures can be standardly constructed as in the proof of Lemma
\ref{LemmaInvariantCurrentForKahlerClass}: Let $\alpha$ be a smooth probability measure. Then $\mu$ is a cluster point of the sequence
\begin{eqnarray*}
\mu _N=\frac{1}{N}\sum _{j=0}^{N-1}\frac{(f^{*})^j(\alpha )}{\delta _k(f)^j}.
\end{eqnarray*}
There are two problems remain to be solved. First, we don't know whether the measure $\mu$ constructed this way can be pulled back or not. Second, we
don't know whether we have a continuity property to help showing that $f^{\sharp}(\mu )=\delta _k(f)\mu $. If we can extend Theorem
\ref{TheoremContinuityPropertiesOfPullbackOperator} to be applicable to the sequence $\mu _N$ then we can solve these two problems altogether.


\end{document}